\tikzstyle{block} = [rectangle, rounded corners, minimum width=2.5cm, minimum height=1cm, text width=3.5cm, text centered, draw=black]
\tikzstyle{arrow} = [thick,->,>=stealth]
\gdef\SetFigFont#1#2#3#4#5{%
  \reset@font\fontsize{#1}{#2pt}%
  \fontfamily{#3}\fontseries{#4}\fontshape{#5}%
  \selectfont}%
\numberwithin{equation}{section}
\numberwithin{equation}{subsection}
\theoremstyle{plain}
\newtheorem{question}{Question}[section]
\newtheorem{thm}[equation]{Theorem}
\newtheorem{cor}[equation]{Corollary}
\newtheorem{lem}[equation]{Lemma}
\newtheorem{prop}[equation]{Proposition}
\theoremstyle{definition}
\newtheorem{remark}[equation]{Remark}
\newtheorem{nota}[equation]{Notation}
\newtheorem{ex}[equation]{Example}
\newtheorem{rem}[equation]{Remark}
\numberwithin{equation}{section}
\numberwithin{equation}{subsection}
\newcommand{ \lk }{ \mbox{lk} }
\newcommand{ \rk }{ \mbox{rk} }
\newcommand{ \imm }{ \mbox{Imm} }
\newcommand{ \emb }{ \mbox{Emb} }
\def\C{\mathbb C}
\def\R{\mathbb R}
\def\Z{\mathbb Z}
\newcommand{\caln}{\mathcal{N}}
\definecolor{darkgreen}{rgb}{0,0.4,0}
\newcommand{\csere}[1]{{#1}}
\title{The boundary of the Milnor fibre and a linking invariant of finitely determined germs}
\author[1]{Gerg\H{o} Pint\'er}
\affil[1]{Department of Theoretical Physics, Institute of Physics,
Budapest University of Technology and Economics\\
M\H{u}egyetem rkp. 3., H-1111 Budapest, Hungary\\
{email: \texttt{pinter.gergo@ttk.bme.hu}}}
\author[2]{Tam\'{a}s Terpai \orcidlink{0000-0002-5707-2668}}
\affil[2]{ELTE E\"otv\"os Lor\'and University, Budapest, Hungary, Institute of Mathematics\\
P\'azm\'any P. s\'et\'any 1/C, H-1117 Budapest, Hungary\\
{email: \texttt{terpai@math.elte.hu}}}
\begin{document}

\maketitle

\begin{abstract}
The image of a finitely determined holomorphic germ $\Phi$ from $\C^2$ to $\C^3$  defines a hypersurface singularity $(X,0)$, which is in general non-isolated. We show that the diffeomorphism type of the boundary of the Milnor fibre $\partial F$ of $X$ is a topological invariant of the germ $\Phi$. We establish a correspondence between the gluing coefficients (so-called vertical indices) used in the construction of $\partial F$ and a linking invariant $L$ of the associated sphere immersion introduced by T. Ekholm and A. Sz\H{u}cs. For this we provide a direct proof of the equivalence of the different definitions of $L$. Since $L$ can be expressed in terms of the cross cap number $C(\Phi)$ and the triple point number $T(\Phi)$ of a stable deformation of $\Phi$, we obtain a relation between these invariants and the vertical indices. This is illustrated on several examples.

\end{abstract}


\pagestyle{myheadings} \markboth{{\normalsize
G. Pint\'er, T. Terpai}}{ {\normalsize Boundary of the Milnor fibre and a linking invariant}}

\section{Introduction}

\subsection{The purpose of the paper}

The associated stable immersion $\Phi|_{\mathfrak{S}}: \mathfrak{S} \looparrowright S^5$ is the restriction of $\Phi$ to a suitably chosen  3-sphere $ \mathfrak{S}$ around the origin in $ \C^2$.  As it is considered as a $\mathcal{C}^{\infty}$ map, it allows more flexible deformations than the
holomorphic germ $\Phi$. The investigation of the associated immersion creates bridges between immersion theory (differential topology) and local complex singularity
theory (`Milnor fibration package' and `analytic stability package'), and it has several mind-expanding consequences on both areas. For the non-isolated hypersurface singularity $(\mbox{im}(\Phi), 0) \subset (\C^3, 0)$, the boundary of the Milnor fibre is constructed in \cite{NP2} starting from the associated immersion. Recently in \cite{dBM}, the analytic invariants $C(\Phi)$ and $T(\Phi)$ -- the cross cap number and the triple point number of a stable deformation of $\Phi$, respectively -- are proved to be topological invariants. The proof builds on the result that
their combination $C(\Phi)-3T(\Phi)$ is equal to the Ekholm--Sz\H{u}cs linking invariant $L(\Phi|_{\mathfrak{S}})$ of the associated immersion \cite{NP, PS}.

In this paper we create further bridges between the results mentioned above. First we introduce an infinite family of generalizations of $L$ for stable immersions $S^3 \looparrowright \R^5$, then we apply them to finitely determined holomorphic germs. 
We provide a purely topological reformulation of the `vertical indices' used in the construction of the boundary of the Milnor fibre. \csere{Based on this reformulation, we establish a relation between the invariants $C(\Phi)$, $T(\Phi)$ and the vertical indices, and we prove that the vertical indices are topological invariants of $\Phi$, thus the boundary of the Milnor fibre is a topological invariant.}

This paper can be considered as a continuation of \cite{PS} in \csere{the} sense that \cite{PS} and this paper both highlight the hidden role of $L$ in \cite{NP, NP2} and in complex singularity theory. The investigation of finitely determined holomorphic germs from $\C^2$ to $\C^3$ via their associated immersions was started by Andr\'{a}s N\'{e}methi and the first author. In \cite{NP} they established relations between immersion invariants and $C$ and $T$, while in \cite{NP2} they constructed the boundary of the Milnor fibre using the associated immersion. However the appearance of the Ekholm--Sz\H{u}cs invariant $L$ was incidental, since \cite{NP} focuses on the so-called Smale invariant \cite{smale} of the associated immersion, and $L$ appears as an ingredient of the Smale invariant formula of Tobias Ekholm and Andr\'{a}s Sz\H{u}cs.  The proof of the topological \csere{invariance} of $C$ and $T$ in \cite{dBM} motivated Andr\'{a}s S\'{a}ndor and the first author to reprove the correspondence $L(\Phi|_{\mathfrak{S}})=C(\Phi)-3T(\Phi)$ independently from the Smale invariant formulas. Although in \cite{NP2} the invariant $L$ is not mentioned, it turned out that $L$ is closely related to the boundary of the Milnor fibre. This observation leads to a topological reinterpretation of the construction described in \cite{NP2}, as it is presented in this paper.

\subsection{Summary of the results for \texorpdfstring{$L$}{L}} The Ekholm--Sz\H ucs invariant $L(f)$ of a stable immersion $ f: S^3 \looparrowright \R^5$ measures the linking of the image with a copy of the double values, shifted slightly along a suitably chosen normal vector field. 
 In the literature different versions of the definition of $L$ can be found (see \cite{ekholm3, ekholm4, ESz, saeki, juhasz}), which differ from each other in the choice of the normal vector fields. Some of the references use the Seifert framing of the double point curve of $f$ in $S^3$ pushed forward by $df$, while other ones use the homotopically unique normal framing of the immersion $f$ (`global normal framing'). In both cases, there are two vectors at a double value, coming from its two preimages,  and the set of double values is shifted along the sum of the two vectors. The equivalence of the different versions of $L$, i.e. $L_1(f)=-L_2(f)$, is proved recently in \cite{PS}, based on their opposite behavior with respect to regular homotopies. 
 
 What is the relation between the two vector fields, the Seifert framing of the double point set and the global normal framing of the immersion, that implies the equality of the invariants with opposite sign? Motivated by this question, we define infinitely many versions of $L$ for each component of the set of  double values by  using arbitrary normal vector fields for the pushing out. \csere{Then,} we  show that by using the Seifert framing on one preimage component, and the global normal framing on the other one, \csere{the resulting} linking number is $0$. Based on this relation between the two framings we provide a new direct proof for the equality $L_1=-L_2$.

 \subsection{Summary of the results for images of finitely determined germs} Let $ g: ( \C^3, 0) \to ( \C, 0) $ be a holomorphic germ, its zero set $ (X, 0) = (g^{-1} (0), 0) \subset ( \C^3, 0) $ is a hypersurface singularity. The Milnor fibre \cite{milnor} of $ (X, 0)  $ is defined as $F= g^{-1} ( \delta ) \cap B^{6}_{ \epsilon }$ (where the radius $\epsilon$ of the Milnor ball is sufficiently small and $ 0 < \delta \ll \epsilon $). It plays a central role in the study of local singularities. $F$ is always an oriented smooth $4$-manifold (in fact, a complex manifold) with boundary. In the case of isolated singularities (i.e. $ (dg)^{-1}(0) \cap X = \{ 0 \} $), the boundary of the Milnor fibre is diffeomorphic to the link $K=X \cap S^5_{ \epsilon}$. 
For non--isolated hypersurface singularities in $(\C^3,0)$ the situation is
more complicated. The link of $(X, 0)$ is not smooth,
hence the boundary of the Milnor fibre cannot be diffeomorphic to it.
In \cite{NSz} a general algorithm is presented, which constructs the boundary of the Milnor fibre for any non-isolated hypersurface singularity $ (X, 0) \subset ( \C^3, 0) $, although it is rather technical. For particular families
of singularities there are more direct descriptions of the plumbing graph
 of $\partial F$ from the peculiar intrinsic geometry of the germ, see e.g. \cite{MP2} and \cite{dBM2}. 
 
 The image of a finitely determined germ $\Phi: ( \C^2, 0) \to ( \C^3, 0) $ can also be defined as the zero set of a certain germ $ g: ( \C^3, 0) \to ( \C, 0) $, and provides a non-isolated hypersurface singularity $ (X, 0):= (g^{-1}(0), 0)= (\mbox{im} (\Phi), 0) \subset ( \C^3, 0)$. In \cite{NP2}, $ \partial F $ is produced  as a surgery of $ \mathfrak{S}\simeq S^3 $ along the double point locus of the associated immersion $ \Phi|_{\mathfrak{S}} $. The gluing is determined by one integer per each component of the double value set, these numbers are the vertical indices $\mathfrak{vi}_j$. 

 We prove that the sum of all vertical indices is
 \begin{equation}\label{eq:osszeg}
\sum_{j} \mathfrak{vi}_j =  -  \sum_{i \neq k} D_i \cdot D_k - L(\Phi|_{\mathfrak{S}})= -  \sum_{i \neq k} D_i \cdot D_k - C(\Phi) + 3 T( \Phi),
 \end{equation} 
 where $D_i \cdot D_k$ are the intersection multiplicities of the double point curve components of $\Phi$. \csere{Note that the new result of this paper is the first equality of \eqref{eq:osszeg}, and the second one is its straightforward consequence by using the equality $L(\Phi|_{\mathfrak{S}})=C(\Phi)-3T(\Phi)$ proved in \cite{NP, PS}. Also notice that every term in the first and in the third expression is effectively computable, while in contrast $L(\Phi|_{\mathfrak{S}})$ cannot be computed easily from the definition.} The correspondence \eqref{eq:osszeg} is \csere{verified} on several examples \csere{in Subsection~\ref{ss:examp}}. An algebraic proof of the equality \eqref{eq:osszeg} for a special type of germs is provided in \cite{NP2}. \csere{In this special case, in particular, $T(\Phi)=0$ holds, and the original proof shows directly the equality of the left and right sides of equation~\eqref{eq:osszeg} without referring to $L(\Phi|_{\mathfrak{S}})$. The original proof in the special case is summarized in Subsection~\ref{ss:cor1T0}.}

 The independent topological proof for the general case is based on our results for $L$.  We define the `nearby embedded $3$-manifolds' associated to a stable immersion $f:S^3 \looparrowright \R^5$. Each of these manifolds is constructed by a surgery of $S^3$ along the double point locus of $f$, and they differ from each other in the surgery coefficients. Furthermore, these manifolds are related to the newly defined versions of $L$. If $f=\Phi|_{\mathfrak{S}} $ is the immersion associated with $ \Phi $, then these manifolds serve as topological candidates for the boundary of the Milnor fibre, and we characterize the one which really is the boundary of the Milnor fibre. In this way we obtain a new topological description for the vertical indices, which implies equation \eqref{eq:osszeg}. 
 
 \csere{Since each term on the right side of equation \eqref{eq:osszeg} is a topological invariant of $\Phi$, the sum of the vertical indices is also topological by equation \eqref{eq:osszeg}. Furthermore, we prove that the collection $\{  \mathfrak{vi}_j\}$ of all vertical indices is also a topological invariant of $\Phi$.}  Although  the vertical indices can be computed based on their original definition \cite{NP2}\csere{ (see Subsection~\ref{ss:boundmilnor})}, it uses an auxiliary germ, and the topological picture is not clear from it. This is replaced by our new description, which implies the topological invariance of the vertical indices. \csere{(Note that equation \eqref{eq:osszeg} is not used for the proof of the topological invariance of the vertical indices. Instead, the proofs of both statements are based on the new topological description of the vertical indices.)}

\csere{As a consequence, we show that} the diffeomorphism type of $\partial F$ is invariant under topological left-right equivalence of germs. In comparison, in the case of isolated singularities the Milnor fibre $F$ can be recovered (up to a product with $\R$) as the total space of the universal cover of $S^5_\epsilon \setminus K$ and hence $F \times \R$ is a topological invariant. It is known from an older argument of L\^e \cite{Le} that the homeomorphism type of $F \times \R$ is also a topological invariant in the case of a hypersurface singularity. The topological invariance of the Milnor fibre $F$ itself in our setting remains an open question.

\begin{figure}[h]\label{fig:mindmap}
\centering
\begin{tikzpicture}[text centered,node distance=4cm,on grid]

\node[red](stableLabel){for stable immersions};

\node[rectangle,transform canvas={rotate around={90:(-0,-3)}},below left=3mm and 3mm of stableLabel](thisPaper){Results of this paper:};

\node[rectangle,rounded corners,draw,below left=1.5cm and -10mm of stableLabel,align=center](infL){Infinitely many generalizations\\of $L$ corresponding to arbitrary\\ normal framings};

\node[rectangle,rounded corners,draw,below=1.5cm of infL](newL){New proof for $L_1=-L_2$};

\node[rectangle,rounded corners,draw,above right=0mm and 7cm of infL,align=center](nearby){Nearby $3$-manifolds\\corresponding to normal framings};

\path [draw,-latex] (infL)--(newL);

\path [draw,decorate,decoration=snake] (infL)--(nearby);

\node[rectangle,rounded corners,draw,align=center,below right=20mm and 6mm of nearby](topVI){Topological reformulation\\of the vertical indices};

\path [draw,-latex] (infL)--(topVI.north west);
\path [draw,-latex] (nearby)--(topVI);

\node[rectangle,rounded corners,draw,align=center,below left=24mm and 9mm of newL](Milnor){The boundary of the\\Milnor fiber is a\\topological invariant};

\node[darkgreen,below left=16mm and -15mm of Milnor](finLabel){for finitely determined holomorphic germs};

\node[rectangle,rounded corners,draw,align=center,right=45mm of Milnor](invarVI){The collection of the\\vertical indices is a\\topological invariant};

\node[rectangle,rounded corners,draw,align=center,right=45mm of invarVI](sumVI){Equation \eqref{eq:osszeg} for\\the sum of the\\vertical indices};

\path [draw,-latex] (topVI)--(sumVI);
\path [draw,-latex] (topVI)--(invarVI.north east);
\path [draw,-latex] (invarVI)--(Milnor);

\path[red,draw,rounded corners] ($(infL.north west)+(-2mm,2mm)$) -- ($(nearby.north east)+(2mm,4.2mm)$) --($(nearby.south east)+(2mm,-2mm)$)--($(nearby.south west)+(0mm,-2mm)$)--($(newL.south east)+(2mm,-2mm)$)--($(newL.south west)+(-7.6mm,-2mm)$) -- cycle;

\path[darkgreen,draw,rounded corners] ($(Milnor.north west)+(-2mm,2mm)$)--($(invarVI.north)+(0mm,2mm)$)--($(topVI.north west)+(-2mm,2mm)$)--($(topVI.north east)+(2mm,2mm)$)--($(sumVI.south east)+(2mm,-2mm)$)--($(Milnor.south west)+(-2mm,-2mm)$)--cycle;

\node[below right=1cm and 15mm of finLabel](otherPaper){Other related results for finitely determined holomorphic germs:};

\node[below left=1cm and 2.8cm of otherPaper,draw,rectangle,rounded corners](Ltop){ $L=C-3T $ and $L$ is topological \cite{NP} \cite{PS}};

\node[right=7.8cm of Ltop,draw,rectangle,rounded corners](CTinvar){$C$ and $T$ are topological invariants \cite{dBM}};

\path [draw,->,decorate,decoration=snake] (Ltop)--(CTinvar);







\end{tikzpicture}
\caption{\csere{The structure of some of the results related to this paper}}
\end{figure}

\subsection{Structure of the article}
In Section~\ref{s:prelim} we introduce the required notions and previous results. We collect the different versions of the Ekholm--Sz\H{u}cs linking invariant $L$ of stable immersions, we introduce the invariants $C$ and $T$ of finitely determined holomorphic germs, and we state the relation between the three invariants for  immersions associated with finitely determined germs. We introduce the vertical indices and we summarize the construction of the boundary of the Milnor fibre for the image of finitely determined germs.

In Section~\ref{s:decL} we define the family of componentwise generalizations of $L$ for stable immersions. As a consequence we provide a new direct proof for the equality of the different versions of $L$ (with a negative sign).

In Section~\ref{s:nearby} we introduce the family of nearby embedded 3-manifolds for a stable immersion. We establish their relation to the generalizations of $L$.

The content of Section~\ref{s:decL} and Section~\ref{s:nearby} is topological. The results of these sections are applied to finitely determined germs in Section~\ref{s:topref}, where a topological reformulation of the vertical indices is given. Based on this we prove their relation to $L$, that is, equation \eqref{eq:osszeg}, and the topological invariance of the vertical indices. The result is illustrated on several examples.

In Section~\ref{s:que} a final remark and some related questions are presented.

\subsection{Funding}

This work was supported by the Ministry of Culture and Innovation and the National Research, Development and Innovation Office within the Quantum Information National Laboratory of Hungary [grant number 2022-2.1.1-NL-2022-00004 to G.P.]; and the National Research, Development and Innovation Office [grant number K 132146 to G.P.].

\subsection{Acknowledgments}

We thank Andr\'as N\'emethi and Andr\'as Sz\H{u}cs for suggesting the topic of this paper and numerous fruitful discussions on it. We are grateful to J\'ozsef Bodn\'ar, L\'aszl\'o Feh\'er, Roberto Gim\'enez Conejero, David Mond and Andr\'as S\'andor for answering our questions and sharing their knowledge on related topics. GP thanks his physicist colleagues Andr\'as P\'alyi, Gy\"orgy Frank, Zolt\'{a}n Guba, D\'aniel Varjas and J\'anos Asb\'oth for the new inspiration to the singularity theory research, and Bence Csajb\'ok for help with numerical calculations in the examples.

\section{Preliminaries}\label{s:prelim}

\csere{
\subsection{Framings of links}\label{ss:framlink}}

\csere{Here we summarize some basic properties of links in $S^3$ and their normal framings. These properties are used in Subsection~\ref{ss:eszinv} and in Section~\ref{s:decL}, where the `linking calculus' is introduced.}

\csere{Let $C \subset S^3$ be an oriented link, that is, a smooth oriented submanifold of dimension 1, or, equivalently, the image of an embedding $\bigsqcup_{i=1}^{l} S^1 \hookrightarrow S^3$ endowed with the induced orientation. Its components are denoted by $C_i \cong S^1 \subset S^3$, $i=1, \dots, l$. By Alexander duality $H_1(S^3 \setminus C, \Z) \cong \Z^l$ holds for the first homology of the link complement. An isomorphism is given by the linking numbers with the components of $C$. That is, for 1-cycles $B $ in $S^3 \setminus C$ (e.g., $B$ is another link with $B \cap C= \emptyset$), the linking numbers $ \lk_{S^3}(B, C_i)$ depend only on the homology class $[B] \in H_1(S^3 \setminus C, \Z)$ of $B$, and the map $[B] \mapsto \lk_{S^3}(B, C_i)$ is a homomorphism, moreover, it is easy to see that this is an isomorphism. (All discussed linking numbers are considered with respect to the natural orientation of the involved curves and submanifolds.)}

\csere{A \emph{normal framing} $v$ of $C$ is a nowhere vanishing normal vector field $v$ of $C\subset S^3$. More precisely, any vector field $v$ along $C$ tangent to $S^3$ and nowhere tangent to $\gamma$ represents a nonzero section of the normal bundle $\nu(C):=TS^3|_{C}/T C$ of $C \subset S^3$. Slightly abusing the notation we do not distinguish the vector field $v$ and the normal framing $[v]$ represented by $v$. We denote by $v_i=v|_{C_i}$ the restrictions to the individual components.}

\csere{For any normal framing $v$ of $C$ we can consider the link $\widetilde{C}^v$, which is the slightly shifted copy of $C$ along $v$, that is, $\widetilde{C}^v=C+\delta v$, $0< \delta \ll 1$. Obviously, $\widetilde{C}^v \cap C=\emptyset$. We denote by $\widetilde{C}^v_i$ the component corresponding to $C_i$. By the above argument, the homology class $[\widetilde{C}^v] \in H_1(S^3 \setminus C, \Z)$ is completely described by the linking numbers $ \lk_{S^3}(\widetilde{C}^v, C_i)=\sum_{j=1}^l \lk_{S^3}(\widetilde{C}^v_j, C_i)$, $i=1, \dots, l$. However, observe that  $\lk_{S^3}(\widetilde{C}^v_j, C_i)=\lk_{S^3}(C_j, C_i)$ holds for $i \neq j $, in particular, these linking numbers do not depend on the framing $v$. Therefore, the homology class of $\widetilde{C}^v_i$ is completely determined by the integers $\lk_{S^3}(\widetilde{C}^v_i, C_i)$, $i=1, \dots, l$, that is, the linking of the components with their own shifted copies.}

\csere{Two normal framings $v$ and $w$ of $C$ are homotopic if and only if their restrictions $v_i$ and $w_i$ are homotopic for every $i=1, \dots, l$. Up to homotopy, $v_i$ and $w_i$ differ by an element of $\pi_1(SO(2)) \cong \Z$. It is convenient to associate the integers $\lk_{S^3}(\widetilde{C}^v_i, C_i)$ and $\lk_{S^3}(\widetilde{C}^w_i, C_i)$ (or equivalently $\lk_{S^3}(\widetilde{C}^v, C_i)$ and $\lk_{S^3}(\widetilde{C}^v, C_i)$) to $v_i$ and $w_i$, respectively. Then, their difference in $\pi_1(SO(2))$ is equal to $\lk_{S^3}(\widetilde{C}^v_i, C_i)-\lk_{S^3}(\widetilde{C}^w_i, C_i)=\lk_{S^3}(\widetilde{C}^v, C_i)-\lk_{S^3}(\widetilde{C}^v, C_i)$. Therefore, we have the following characterization:}

\csere{
\begin{prop}
    For two normal framings $v$ and $w$ of $C$ the following are equivalent:
    \begin{enumerate}
        \item $v$ and $w$ are homotopic,
        \item $[\widetilde{C}^v]=[\widetilde{C}^w]$ in $H_1(S^3 \setminus C, \Z)$,
        \item $\lk_{S^3}(\widetilde{C}^v, C_i)=\lk_{S^3}(\widetilde{C}^w, C_i)$ for all $i=1, \dots, l$,
        \item $\lk_{S^3}(\widetilde{C}^v_i, C_i)=\lk_{S^3}(\widetilde{C}^w_i, C_i)$ for all $i=1, \dots, l$.
    \end{enumerate}
\end{prop}
}

\csere{The \emph{Seifert framing} of $C$ is a normal framing $v$ of $C$ with $[\widetilde{C}^v]=0 \in H_1(S^3 \setminus C, \Z)$. By the above proposition, the Seifert framing is unique up to homotopy, and it is characterized by the equations $\lk_{S^3}(\widetilde{C}^v, C_i)=0$ for all $i=1, \dots, l$.}

\csere{An \emph{oriented Sefiert surface} $T \subset S^3$ of the oriented link $C \subset S^3$ is an oriented submanifold of dimension 2 with boundary $\partial T=C$. By a classical argument, every link admits an oriented Seifert surface. If an oriented Seifert surface $T \subset S^3$ of an oriented link $ C \subset S^3 $ is fixed, then one has two canonical representatives of the Seifert framing as concrete normal fields. The first one is the outward pointing (orthogonal) normal vector field $v$ of $ C= \partial T \subset T$, the other one is the (orthogonal) normal vector field $u$ of $ T \subset S^3 $ determined by the orientation, restricted to $C$. The rotation by $ \pi /2$ in the normal planes of $ C \subset S^3 $ provides a homotopy between the two vector fields $v$ and $w=u|_C$. The slightly shifted copy $\widetilde{T}^u$ of $T$ along $u$ serves as an oriented Seifert surface of the shifted copy $\widetilde{C}^{w}$ of $C$ along $w=u|_C$. Moreover, $\widetilde{T}^u \cap C_i= \emptyset$ holds for all $i=1, \dots, l$. These observations show that the normal vector fields $w=u|_C$ and $v$ really represent the Seifert framing of $C$.}

\subsection{The Ekholm--Sz\texorpdfstring{\H u}{u}cs invariant of stable immersions}\label{ss:eszinv}
The invariant $L(f)$ of a stable immersion $ f: S^3 \looparrowright \R^5 $ measures the linking of a shifted copy of the double values with the whole image of $f$. Different versions of the definition can be found in the literature, for references see below.  In this paragraph, we review these definitions. Their equivalence is proved in \cite{PS} via their behavior along regular homotopies, and we provide a new direct proof for it in Section~\ref{s:decL}. We present the whole argument in the simplest case, for immersions $S^3 \looparrowright \R^5$, although originally they were introduced for different levels of generality (for other manifolds, higher dimensions) in \cite{ekholm3, ekholm4, ESz, saeki}. This subsection is an extended version of the summary in \cite[2.2.2.]{gtezis}, published also in \cite{PS}.


\csere{First we summarize the different versions of $L$ as they appeared in the above references, but adapted to $f: S^3 \looparrowright \R^5$ stable immersions and adding some necessary comments.}

A stable immersion $ f: S^3 \looparrowright \R^5 $ has only single values and double values with transverse intersection of the two branches. Let 
$ \gamma \subset S^3 $ be the double point locus of $f$,
that is
$ \gamma = \{ p \in S^3 \ | \ \exists p' \in S^3: \ p \neq p' \mbox{ and } f(p)=f(p') \} $.
The locus $ \gamma $ is a closed oriented $1$-manifold, i.e. a link in $S^3$ with possibly multiple components.
The map $ f|_{\gamma}: \gamma \to f( \gamma ) $ is a $2$-fold covering.
$ \gamma $ is endowed with an involution $ \iota: \gamma \to \gamma $ such that $\iota(p) \neq p$ and $ f( p )= f ( \iota (p) ) $ \csere{holds} for all $ p \in \gamma $. 

The first definition of $ L(f)$ is from \cite[6.2.]{ekholm3}. \csere{This definition uses the Seifert framing $v$ of the link $\gamma \subset S^3$. Recall that the Seifert framing is the homotopically unique normal framing characterized by the property that the slightly shifted copy $\widetilde{\gamma}^v$ of $\gamma$ along $v$ represents $[\widetilde{\gamma}^v]=0$ in $H_1 ( S^3 \setminus \gamma , \Z) $.}
Let $ q = f(p) = f ( \iota (p)) $ be a double value of $f$. Then $ w(q) = df_p (v(p)) + df_{ \iota(p)} (v (\iota (p)) ) $ defines a vector field $w$ along $ f( \gamma ) $ that is nowhere tangent to the branches of $f$. In this sense $w$ is a normal vector field of $f$ along $f(\gamma)$.
Let $ \widetilde{f(\gamma)} \subset \R^5 $ be the result of pushing $ f( \gamma) $ slightly along $w$, then $ \widetilde{f(\gamma)} $ and $ f( S^3 ) $ are disjoint. The first invariant is the linking number
\begin{equation*}
L_1 (f) := \lk_{ \R^5 } ( \widetilde{f(\gamma)}, f( S^3) )
\end{equation*}
(or equivalently, $L_1 (f)= [ \widetilde{f(\gamma)} ] \in 
H_1(\R^5 \setminus f(S^3), \Z) \cong \Z $).
Note that Ekholm used a slightly different notation: in \cite[2.2., 6.2.]{ekholm3} our $ L_1 (f) $ is denoted by $ \lk (f) $, and $ L(f) $ is defined as $ \lfloor \lk (f) /3 \rfloor $.

The second definition is \cite[Definition 11.]{ESz}, \cite[Definition 2.2.]{saeki}. It works only with further assumptions, see Remark \ref{re:fuas} below. The normal bundle $ \nu (f) $ of $f$ is trivial, since the oriented rank--$2$ vector bundles over $S^3 $ are classified by $ \pi_2 (SO(2)) =0$. Any two trivializations are homotopic, since their difference represents an element in $ \pi_3 (SO(2)) =0$. Let $ (v_1, v_2) $ be the homotopically unique normal framing of $f$ (`global normal framing'), and at a double value $ q = f(p) = f ( \iota (p)) $ define 
$ u(q) = v_1(p) + v_1 (\iota (p))  $. $u$ is a normal vector field along $f( \gamma)$, and let $ \overline{f(\gamma)} \subset \R^5 $ be the result of pushing $ f( \gamma) $ slightly along $u$. \csere{While $ \overline{f(\gamma)} $ and $ f( S^3 ) $ could in general intersect, under the assumptions of Remark \ref{re:fuas} they are disjoint.} The invariant is the linking number (or equivalently, the homology class) 

\begin{equation*}
L_2 (f) := \lk_{ \R^5 } ( \overline{f(\gamma)}, f( S^3) ) = [ \overline{f(\gamma)} ] \in 
H_1(\R^5 \setminus f(S^3), \Z) \cong \Z \mbox{.}
\end{equation*}

Note that the framing $(v_1, v_2)$ can be replaced by an arbitrary nonzero normal vector field $v$ of $f$, since it can be extended to a framing whose first component is $v$.

\begin{rem}\label{re:fuas}
	Without further assumptions it is possible that $u(q)$ is tangent to one of the branches of $f$, hence it can happen that $ \overline{f(\gamma)} \cap f( S^3 ) \neq \emptyset $. To avoid this problem one has to choose a unit normal vector field $v$, or has to assume that the intersection of the branches is orthogonal, which can be reached by a regular homotopy through stable immersions. This is the case with the third version of $L$ below as well. In Section~\ref{s:decL} we present a more coherent discussion of the different versions of $L$, and we do not assume any further condition for the immersion and for the framings. Instead we avoid using directly the normal framings of $f$ for pushing out, see Subsection~\ref{ss:nontrivpush}.
	\end{rem}

The third definition is in \cite[Definition 4.]{ESz}, see also \cite[4.5., 4.6.]{ekholm4}. Let $v$ be a nonzero normal vector field of $f$ along $ \gamma $, that is, a nowhere zero section of $ \nu (f) |_{ \gamma} $. \csere{Denoting by $ E_0( \nu (f)) $ the total space of the bundle of nonzero normal vectors of $f$, the section $v: \gamma \to E_0( \nu (f))$ defines a homological cycle (a `link') in $ E_0( \nu (f)) $. Let $ [v] $ be the homology class represented by $v$ in $ H_1 ( E_0( \nu (f)), \Z ) \cong \Z $. We define a vector field $u_v$ along $ f( \gamma ) $ that depends on $v$ whose value at the point $ q= f(p) = f ( \iota (p) )$ is defined as $u_v (q) = v(p) + v( \iota (p) ) $. Let $ \overline{f(\gamma)}^{(v)} $ be the result of pushing $ f( \gamma) $ slightly along $u_v$, then $ \overline{f(\gamma)}^{(v)} $ and $ f( S^3 ) $ can be assumed to be disjoint keeping in mind Remark~\ref{re:fuas}.} The invariant is
\begin{equation}\label{eq:Lv} 
L_v (f) := \lk_{ \R^5 } ( \overline{f(\gamma)}^{(v)}, f( S^3) ) -[v]= [ \overline{f(\gamma)}^{(v)} ] -[v] \mbox{,}
\end{equation}
where $ [ \overline{f(\gamma)}^{(v)} ] \in 
H_1(\R^5 \setminus f(S^3), \Z) \cong \Z $.

By \cite[Lemma 4.15.]{ekholm4} $ L_v (f) $ is well-defined, that is, $ L_v (f) $ does not depend on the choice of the normal field $v$. Moreover, if $ v $ is the restriction of a (global) normal vector field of $f $ to $ \gamma $, then $[v]=0$. Indeed, the restriction of the normal field of $f$ to a Seifert surface $T \subset S^3$ of $ \gamma $ results a surface $ \overline{T} \subset E_0( \nu (f)) $, 
whose boundary is the image of $ v : \gamma \to E_0( \nu (f))$. Hence $ L_v (f) = L_2 (f) $.

The invariants $L_1 $, $L_2 $ are equal to each other with opposite sign. This follows from the fact that they behave in an inverse way along regular homotopies, i.e. they change with the same number with opposite sign when a stable regular homotopy steps through first order instabilities: immersions with (1) one triple value (`triple point moves') or (2) a self-tangency (`self-tangency moves'). For definitions we refer  to \cite{ekholm3, ekholm4}. The proof  of Proposition~\ref{pr:Leq} is a result of a discussion with Andr\'{a}s Sz\H{u}cs, and it was published in \cite{PS}. Since one of the goals of this paper is a comprehensive analysis of the different versions of $L$, we include it here as well.

\begin{prop}\label{pr:Leq}
	
	(a) $ L_1 (f) $ and $ L_2 (f) $ are invariants of stable immersions. They change by $ \pm 3 $ under triple point moves and do not change under self tangency moves. In other words: if $f$ and $g$ are regular homotopic stable immersions, $h: S^3 \times [0, 1] \to \R^5 $ is a stable regular homotopy between them, then $\pm (L_i (f) -L_i (g))$ is equal to three times the algebraic number of the triple values of the map $ H: S^3 \times [0, 1] \to \R^5 \times [0, 1]$, $H(x, t)=(h(x, t), t)$.
	
	(b) In the above setup $L_1 (f) -L_1 (g)=-(L_2 (f) -L_2 (g))$.
	
	(c) The three definitions are equivalent:
	\[
	L_1 (f) = -L_2 (f) = -L_v (f).
    \]
\end{prop}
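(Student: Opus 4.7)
The plan is to prove (a) and (b) together by analyzing how $L_1$ and $L_2$ change along a generic stable regular homotopy between $f$ and $g$, and then to deduce (c) from (b) via a base-case computation. The equality $L_v = L_2$ for the $v$ coming from a global normal framing of $f$ is already recorded in the paragraph preceding the statement, so the content of (c) reduces to $L_1 = -L_2$.

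For (a), I would take a generic regular homotopy $h_t : S^3 \to \R^5$ from $f$ to $g$, chosen so that $h_t$ is a stable immersion for all but finitely many $t_i \in (0,1)$, at which $h_{t_i}$ has either a single triple value or a single self-tangency. On each subinterval of stability the pushed-off curves (defined via Seifert or global normal framings in the sense of $\tilde{f(\gamma)}$ and $\overline{f(\gamma)}$) move by an isotopy in the complement of $h_t(S^3)$, so $L_1$ and $L_2$ are locally constant. At a self-tangency time a small circle component of $\gamma$ is born or dies; this circle bounds a small framed disk in $\R^5$ disjoint from the rest of the image, so it contributes $0$ to both $L_1$ and $L_2$. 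At a triple point time I would use the local model of three mutually transverse affine $3$-planes in $\R^5$ meeting at a common point: the three preimages of the triple value in $S^3$ are points at which pairs of double arcs meet, and the resolution realizes a triangle-swap (Reidemeister-III--type move) among these arcs. A direct computation in this local model shows that the resulting jumps of the classes $[\tilde{f(\gamma)}]$ and $[\overline{f(\gamma)}]$ in $H_1(\R^5 \setminus f(S^3), \Z)$ are each $\pm 3$, establishing (a). For (b), comparing the two calculations at the triple point shows that the Seifert framing of $\gamma$ pushed forward by $df$ and the global normal framing of $f$ differ by an orientation reversal across the triangle region: the Seifert framing is fixed by a null-homology condition in $S^3$, whereas the global normal framing is characterized by an extension over a Seifert-type surface in $E_0(\nu(f))$, and these two extensions are compatible only up to a sign. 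Hence the jumps come with opposite signs, giving $L_1(f) - L_1(g) = -(L_2(f) - L_2(g))$.

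For (c), part (b) implies that $L_1 + L_2$ is constant on each regular homotopy class of stable immersions, and these classes are indexed by the Smale invariant in $\pi_3(V_{5,3}) \cong \Z$. It therefore suffices to verify $L_1 + L_2 = 0$ on a representative of each class: for the class of the standard embedding one has $\gamma = \emptyset$ and so $L_1 = L_2 = 0$, while for the other Smale classes a standard twisting construction produces stable representatives whose double point curves and framings can be examined directly to give $L_1 + L_2 = 0$ in each case. The hard part will be the sign analysis at the triple point move in (a) and (b): simultaneously tracking the Seifert framing of $\gamma \subset S^3$, the global normal framing of $f$, and the behavior of the two sum framings over the pair of preimages of each double value under the triangle-swap demands a careful local coordinate computation, and this is where the geometric content of the proposition is concentrated.
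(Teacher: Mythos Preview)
Your outline for (a) and (b) is in the same spirit as the paper's: both analyze the jumps of $L_1$ and $L_2$ across triple point and self-tangency moves. The paper does not redo this analysis but cites Ekholm's original papers for (a), and for (b) simply checks Ekholm's explicit local model and sign convention against each other; your ``orientation reversal across the triangle region'' is a plausible heuristic but would need the same concrete local computation to be a proof.

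The real divergence is in (c). You propose to verify $L_1+L_2=0$ on an explicit stable representative of \emph{each} Smale class, via ``a standard twisting construction''. There is no such standard construction readily available with a manageable double point set, and carrying out both linking computations on even a single generator of $\imm(S^3,\R^5)\cong\Z$ is not easy; moreover, without an additivity statement you genuinely need every class, not just a generator. The paper bypasses this entirely: it observes that $L_1$ and $L_2$ are additive under connected sum (citing Ekholm), so $L_1+L_2$ is a \emph{homomorphism} $\imm(S^3,\R^5)\to\Z$. Since it vanishes on embeddings and $\emb(S^3,\R^5)$ has index $24$ in $\imm(S^3,\R^5)$, the homomorphism vanishes on $24\Z\subset\Z$ and hence is identically zero. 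This algebraic trick replaces all of your proposed explicit computations by the single trivial observation $L_1=L_2=0$ for embeddings, and is the key idea you are missing.
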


\begin{proof}
	Part (a) is proved for $L_1$ in \cite[Lemma 6.2.1.]{ekholm3} and for $L_2=L_v$ in \cite[Theorem 1.]{ekholm4}. 
	
	For part (b), we compare the change of $L_1$ and $L_2$ through a triple point move. In the proof of \cite[Lemma 6.2.1.]{ekholm3} Ekholm defines a local model of the triple point move where $L_1$ increases by $3$. On the other hand, in  the discussion preceding \cite[Definition 6.3]{ekholm4} he provides a convention to measure the change of $L_2$. If we check this convention on the previous local model, we obtain that $L_2$ decreases by $3$ through that triple point move. Hence $L_1$ and $L_2$ changes in opposite ways at each triple point move.
	
	Using part (a) and part (b), we prove part (c) as follows. 
	Since $ L_1 $ and $L_2 $ changes in opposite way along a regular homotopy, $L_1+L_2$ is a regular homotopy invariant. Moreover
	$ L_1$ and $L_2 $ are additive under connected sum, see \cite[Lemma 5.2., Proposition 5.4.]{ekholm4}, \cite[6.5.]{ekholm3}. It follows that $L_1 + L_2$ defines a homomorphism from the group of the regular homotopy classes $ \imm (S^3, \R^5 ) $ to $ \Z$. If $f: S^3 \hookrightarrow \R^5 $ is an embedding, then $ L_1 (f) = L_2 (f) = 0 $, hence $ L_1 + L_2 $ is $0$ on the $24$-index subgroup $ \emb (S^3, \R^5 )$ of $ \imm ( S^3, \R^5 ) \cong \Z$, cf. \cite{hughes-melvin}. It follows that $L_1 + L_2$ is $0$ for every stable immersion, hence $L_1 = - L_2$. 
\end{proof}

Although $L_1=-L_2$ is proved, a natural question arises: what is the relation between the Seifert framing of the double point curve $\gamma$ and the global normal framing of the immersion $f$, which implies the equivalence of the corresponding versions of $L$? Motivated by this question, $L$ will be  studied  thoroughly in Section~\ref{s:decL}. An infinite family of different versions is introduced componentwise, and a new direct proof for $L_2(f)=-L_1(f)$ will be provided.

We fix the following convention.
\begin{nota}
$L(f):=L_1(f)$.
\end{nota}

\subsection{Invariants of the stabilization of finitely determined germs}\label{ss:invfindet} A holomorphic germ $ \Phi: ( \C^2, 0) \to ( \C^3, 0) $  is \emph{finitely $ \mathcal{A}$-determined} (briefly, \emph{finitely determined}), if there is an integer $k$ such that the $k$-th Taylor polynomial of $ \Phi $ determines it up to left-right equivalence, or equivalently, the $ \mathcal{A}$-codimension of $ \Phi$ is finite.
By Mather--Gaffney criterion \cite{Wall, mond-ballesteros}, $ \Phi $ is finitely determined if and only if its restriction $ \Phi|_{ \C^2 \setminus \{ 0 \}} $ is stable. This means that a sufficiently small representative of $\Phi|_{ \C^2 \setminus \{ 0 \}}$ has only (1) regular simple points and (2) double values with transverse intersection of the regular branches.

The only possible multigerms of a  stable deformation of a holomorphic germ $ \Phi: ( \C^2, 0) \to ( \C^3, 0) $ are (1) regular simple points, (2) double values with transverse intersection of the regular branches, (3) triple values with regular intersection of the regular branches and (4) simple \emph{Whitney umbrella (cross cap)} points, cf. \cite{Whitney}. The Whitney umbrellas and the triple values are isolated points, up to analytic $\mathcal{A}$-equivalence they have local normal forms
\begin{equation*} 
    \mbox{Whitney umbrella (cross cap): } \ (s,t) \mapsto (s^2, st, t) 
\end{equation*}
\begin{equation*} 
    \mbox{Triple value:} 
    \left\{ \begin{array}{ccc} 
     (s_1,t_1)    & \mapsto & (0,s_1,t_1)  \\
     (s_2,t_2)    & \mapsto & (t_2,0,s_2)  \\
     (s_3,t_3)    & \mapsto & (s_3,t_3,0)  \\
    \end{array} \right.
\end{equation*}

The numbers $ C( \Phi ) $ of the cross caps and $ T( \Phi ) $ of the triple values are independent of the stabilization, they are analytic invariants of finitely determined germs $ \Phi $.
Both invariants were  introduced by Mond \cite{Mond0, Mond2}, they can be defined in algebraic way as well, without referring to a stabilization, as follows. 

Let $ C_{alg}( \Phi ) $ be the codimension of the \emph{ramification ideal}, which is the ideal in the local ring $ \mathcal{O}_{( \C^2, 0)}$ generated by the determinants of the $2 \times 2$ minors of the Jacobian matrix of $ \Phi : ( \C^2, 0) \to ( \C^3, 0) $. $ T_{alg}( \Phi ) $ is  the codimension of the second Fitting ideal associated with $ \Phi $ in $ \mathcal{O}_{( \C^3, 0)}$ \cite{mondfitting}. If $ \Phi $ is finitely determined, then both $ C_{alg}( \Phi ) $ and $ T_{alg}( \Phi ) $ are finite, and any stabilization of $ \Phi $ has $ C( \Phi )= C_{alg}( \Phi )$ number of cross caps and $ T( \Phi ) =T_{alg}( \Phi )$ number of triple values.   The invariants $ T$ and $ C$ appear in several different contexts, see for example \cite{Mondvan, mararmulti, nunodouble, slicing, mond-ballesteros, gtezis}.

\subsection{The associated immersion}\label{ss:associmm} Let $ \Phi: (\C^2, 0) \to (\C^3,0) $ be  a finitely determined germ. Such a germ, on the level of links of the spacegerms
$(\C^2,0)$ and $(\C^3,0)$, provides
a stable immersion $ S^3 \looparrowright S^5 $ as follows. The preimage $ \mathfrak{S}:= \Phi^{-1} ( S^5_{ \epsilon })$ of the $5$-sphere $ S^5_{ \epsilon } \subset \C^3 $ around the origin, with a sufficiently small radius $ \epsilon $,  is diffeomorphic to $S^3$. The restriction $ \Phi|_{\mathfrak{S}}: \mathfrak{S} \looparrowright S^5_{ \epsilon} $ is the immersion associated with $ \Phi $. The regular homotopy class of $\Phi|_{\mathfrak{S}} $ is independent of all the choices.
The immersions obtained by different choices are regular homotopic to each other through stable immersions. See \cite[2.1.]{NP} or \cite[Subsection 1.1.2.]{gtezis}.

\csere{While the immersions considered in Subsection~\ref{ss:eszinv} are $S^3 \to \R^5$ and not $S^3 \to S^5$ as needed here, neither the immersions nor their homotopies will generically pass through the point $S^5 \setminus \R^5$ ``at infinity''. Hence the equivalence classes of immersions of $S^3$ to $\R^5$ (up to regular homotopy through stable immersions) can be naturally identified with those of immersions of $S^3$ to $S^5$ (see more details in \cite[Example 2.1.14]{gtezis}). Via this identification the Ekholm--Sz\H{u}cs invariant $L(\Phi|_{\mathfrak{S}})$ of the associated invariant $\Phi|_{\mathfrak{S}}$ can be defined, however, its explicit construction causes some technical difficulties, see \cite[Section 3.2.]{PS}.} 
Since the Ekholm--Sz\H{u}cs invariant $L$ does not change along regular homotopies through stable immersions, $L(\Phi|_{\mathfrak{S}})$ is a well defined invariant of $\Phi$. The correspondence
\begin{equation}\label{eq:LCT}
L(\Phi|_{\mathfrak{S}})=C(\Phi)-3T(\Phi)
\end{equation}
was proved first in \cite{NP}, and a new direct proof is published recently in \cite{PS}. \csere{By \cite[Prop. 3.2.2.]{PS}, $L(\Phi|_{\mathfrak{S}})$ is a topological invariant of $\Phi$, i.e. it is invariant under topological left-right equivalence. Based on this fact,  equation \eqref{eq:LCT} was used in \cite{dBM} to prove the topological invariance of $C$ and $T$.}

\subsection{The image and the double points}\label{ss:imdoub} \csere{ Let $ \Phi: (\C^2, 0) \to (\C^3,0) $ be  a finitely determined germ.} Write $(X,0)$ for $({\rm im}(\Phi),0)$ and
let $ g: ( \C^3, 0) \to ( \C, 0) $ be the reduced equation of $(X,0)$.
 Note that $ (X, 0) $ is a {\it non-isolated} hypersurface singularity, except when $ \Phi $ is a regular map (see \cite{NP}).
 We denote by $ (\Sigma,0) = ( \partial_{x_1} g, \partial_{x_2} g, \partial_{x_3} g)^{-1} (0) \subset (\C^3,0) $
 the \textit{reduced} singular locus of  $(X,0)$
  -- that is the closure of the set of double values of $\Phi$.
 Also, we denote by $(D,0)$ the \textit{reduced}
 double point curve $ \Phi^{-1} (\Sigma ) \subset (\C^2,0) $. The reduced equation of $D$ is
$ d: (\C^2, 0) \to (\C, 0) $.
 (In fact, the finite determinacy of the germ $ \Phi$
 is equivalent with the fact that the double point curve $D$ is reduced;  see e.g. \cite{nunodouble}.)
 
 Let $ \Upsilon \subset S^5_\epsilon$ be the link of $\Sigma$.
It is exactly the  set of double values  of $  \Phi |_{ \mathfrak{S}} $.
Let $ \gamma = \Phi^{-1}(\Upsilon) \subset \mathfrak{S}^3 $ denote the set of double points of
$  \Phi |_{ \mathfrak{S}} $, that is,
$ \gamma \subset  \mathfrak {S}  $ is the link of $D$. All link components are considered with  their natural orientations.

\begin{equation*}
     \begin{array}{ccc}
( \C^2, 0) & \to & ( \C^3, 0) \\
\cup &          & \cup \\
(D, 0) &  \to      & ( \Sigma, 0)  \\
 \cup &          & \cup \\
\gamma=D \cap \mathfrak{S}^3 &  \to      & \Upsilon= \Sigma \cap S_\epsilon^5            
\end{array}
\end{equation*}

 Let $ \gamma = \bigsqcup_{i=1}^l \gamma_i $ be the decomposition of $\gamma$ corresponding to the irreducible decomposition $D= \bigcup_{i=1}^l D_i $ of $D$. The involution on $ D$ pairing the double points induces an order 2 permutation $ \sigma $ of $ \{ 1, \dots , l \} $ such that $ \Phi (\gamma_i)=\Phi (\gamma_{ \sigma(i) }) $.
Let $ J $ be the set of unordered pairs   $\{i, \sigma (i)\}$ ($1\leq i \leq l$). It
serves as the index set of the   components of $ \Upsilon $, that is,
 $ \Upsilon= \bigsqcup_{j \in J} \Upsilon_j$.  Each $\gamma_i$ and $\Upsilon_j$ is diffeomorphic with $S^1$.  If $ i = \sigma(i) $ for some $i$ and $j=\{i\}$, then 
 \begin{equation*} \Phi |_{\gamma_i}: \gamma_i \to \Upsilon_j \end{equation*}
 is a nontrivial double covering of circles. We call these components \emph{twisted}. For the other components $j=\{i, \sigma(i)\}$ with $i \neq \sigma(i)$ the covering 
 \begin{equation*} \Phi |_{\gamma_i \sqcup \gamma_{\sigma(i)}}: \gamma_i \sqcup \gamma_{\sigma(i)} \to \Upsilon_j \end{equation*}  
 is  trivial. These components are called \emph{untwisted}.

\subsection{The boundary of the Milnor fibre}\label{ss:boundmilnor}

Recall that the Milnor fibre $F$ of $(X, 0 )=(g^{-1}(0), 0) \subset (\C^3, 0)$ is defined as
$F= g^{-1} ( \delta ) \cap B^{6}_{ \epsilon }$, where the radius $\epsilon$ of the Milnor ball is sufficiently small (cf. Subsection~\ref{ss:associmm}) and $ 0 < \delta \ll \epsilon $. The Milnor fibre $F$ is  a smooth 4-manifold in $B^6_{ \epsilon }$ with boundary $\partial F \subset S^5_{ \epsilon }$. Since $(X, 0)$ is a non-isolated singularity, its link $K=X \cap S^5_{ \epsilon }$ is not smooth, indeed, it is the image of the associated immersion $\Phi|_{\mathfrak{S}}$. Therefore $\partial F$ and $K$ are definitely different objects. Here we summarize the algorithm from \cite{NP2} to construct $\partial F$ as a (generalized) surgery of $\mathfrak{S}$ along $\gamma$. \csere{Another summary of this construction can be found in \cite{GCP}.}

The construction expressed by formula \eqref{eq:rag} is standard, see e.g. \cite[2.3., 3.4.]{NSz}. Briefly speaking, on a local transverse slice at $q \in \Upsilon \subset S^5_{\epsilon}$ diffeomorphic to $\C^2 $, the wedge of  the two coordinate discs $\{xy=0\}$ has to be replaced by the cylinder $\{xy= \delta\}$ to obtain $\partial F$ from $K$, see Figure~\ref{fig:cyl}.
However the computation of the surgery coefficients called \emph{vertical indices} is special for these singularities. They are introduced in \cite[Section 4.]{NP2} via non-trivial constructions and statements, using an `aid germ' $H$ and the Taylor expansions of $H$ and $g$. We present below a sketch of the definition. One of the goals of this paper is to clarify the topological meaning of these numbers in the context of the Ekholm--Sz\H{u}cs invariant $L$ of the associated immersion, see Section~\ref{s:topref}.

\begin{figure}[h]

\centering
\resizebox{8cm}{8cm}{
\begin{picture}(0,0)%
\includegraphics{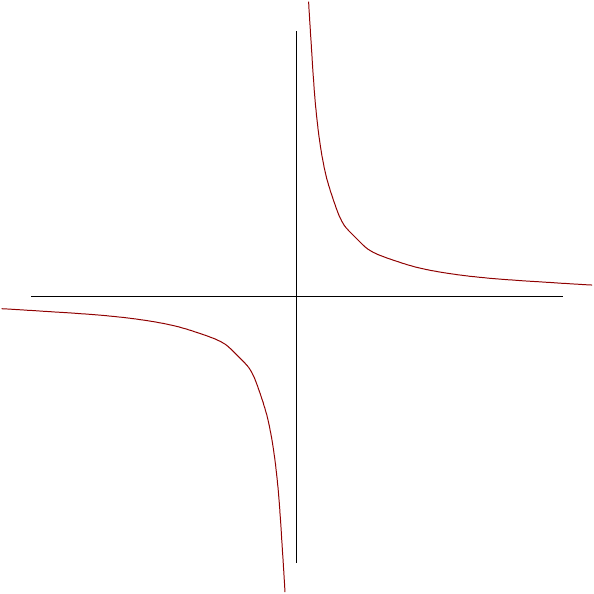}%
\end{picture}%
\setlength{\unitlength}{4144sp}%

\begin{picture}(4524,4524)(1339,-3898)
\put(3916,-376){\makebox(0,0)[lb]{\smash{{\SetFigFont{20}{24.0}{\rmdefault}{\mddefault}{\updefault}{\color[rgb]{.56,0,0}$\partial F$}%
}}}}
\put(4996,-1996){\makebox(0,0)[lb]{\smash{{\SetFigFont{20}{24.0}{\rmdefault}{\mddefault}{\updefault}{\color[rgb]{0,0,0}$K$}%
}}}}
\end{picture}%
}
\caption{The boundary of the Milnor fiber on a transverse slice $\C^2 \subset S^5_{\epsilon}$ at a point $q \in \Upsilon$: the wedge of the two coordinate discs $\{xy=0\}$ is replaced by the cylinder $\{xy= \delta\}$}
\label{fig:cyl}
\end{figure}

A germ $ H: ( \C^3, 0) \to ( \C ,0) $ is called a \emph{transverse section} along $ \Sigma $ if
$ (\Sigma, 0) \subset H^{-1} (0) $ and $H^{-1}(0)$ at any point of $\Sigma \setminus \{ 0 \}$ is smooth and intersects both local
components of $X$ transversally. Transverse sections always exist, we fix one. Clearly $ ( H \circ \Phi)^{-1}(0) \subset ( \C^2, 0) $ decomposes as $ D \cup D_{\sharp} $ with some (not necessarily reduced) curve $ D_{\sharp} $.  Define
\begin{equation*}
\lambda_i = - \sum_{k \not=i } D_k \cdot D_i -D_\sharp \cdot D_i,
\end{equation*}
for $i=1, \dots, l$, where $ C_1 \cdot C_2 $ denotes the intersection multiplicity of the curves
$ (C_1, 0) $ and $ (C_2, 0)  $ at $0\in \C^2$.

Fix
$j=\{i, \sigma(i)\}\in J $, and let  
\begin{equation*} p: (\C, 0) \to (\Sigma_j,0)\subset (\C^3, 0) \mbox{, }
\tau\mapsto p(\tau)  \end{equation*}
be a
parametrization of  $ \Sigma_j $. There exists a splitting $g=g_1 \cdot g_2$ along $\Sigma_j \setminus \{0 \}$ (up to
permutation for twisted components).

\csere{Given a holomorphic function germ $F: (\C^3, P_0) \to (\C, 0)$ defined in a neighborhood of a point $P_0=(x_0, y_0, z_0) \in \C^3$, let $T_k(F)(P_0)$ denote its Taylor polynomial of order $k$ centered at $P_0$ (its variables $x'=x-x_0$, $y'=y-y_0$ and $z'=z-z_0$ are omitted from the notation). With this notation 
\begin{equation}\label{eq:beta}
T_1(H)(p(\tau))=\beta_1(\tau)T_1(g_1)(p(\tau))+\beta_2(\tau)T_1(g_2)(p(\tau)) 
\end{equation}
holds for the first order Taylor polynomial $T_1 (H)$ centered at any point $ 0 \neq p( \tau) \in \Sigma_j $. The product $ \beta_1 \beta_2 $ of the coefficient germs $\beta_1$ and $\beta_2$   is a well-defined meromorphic germ $( \C, 0) \to ( \C, 0)$.  The order (i.e. the smallest negative power) of the Laurent series of $ \beta_1 \beta_2 $ is the \emph{vertical index
	of $ g$ along $ \Sigma_j $ with respect to $ H$}, we denote it by $\mathfrak{v}_j$. Note that both $ \lambda_i $ and $\mathfrak{v}_j$ depend on the choice of $ H$. }

The \emph{vertical index of $ f$ along $ \Sigma_j$} is defined as
\begin{equation*}
\mathfrak{vi}_j := \left\{ \begin{array}{ccc}
\lambda_i  + \lambda_{ \sigma (i)} +  \mathfrak{v}_j  & \mbox{ if } & i \neq \sigma (i) \mbox{,} \\
\lambda_i  + \mathfrak{v}_j  & \mbox{ if } & i = \sigma (i) \mbox{.} \\
\end{array} \right.
\end{equation*}

 \csere{Most of our examples computed in Subsection~\ref{ss:examp} fit into the special case discussed in Subsection~\ref{ss:cor1T0}, which simplifies the computation of the vertical indices and avoids the use of the Taylor series calculations in equation~\eqref{eq:beta}. Even so for the reader's convenience we illustrate this method in Example~\ref{ex:S}. The Taylor series calculations cannot be avoided for more complicated examples like Example~\ref{ex:H}, its full computation can be found in \cite[Subsection 6.7.]{NP2}.}

Let $ N_i $ be a sufficiently small
tubular neighborhood of $ \gamma_i $ in $ \mathfrak{S} $. For each torus $\partial N_i$ we fix two generators in $H_1( \partial N_i, \Z) \cong \Z \oplus \Z $. The oriented \emph{meridian} is a closed curve whose linking number with $\gamma_i $ in $ \mathfrak{S} $ is $1$, and which is the boundary of a disc in $ N_i $. The  \emph{topological longitude} (also called the Seifert framing of $\gamma_i$) is a closed curve that generates $ H_1 (N_i, \Z )$ and whose linking number with $\gamma_i$ is $0$.

\csere{Define the $3$-manifold $ Y$ with torus boundary as the quotient
\begin{equation}\label{eq:3Y}
Y= \frac{[-\pi, \pi] \times S^1 \times [-1,1] }{(-\pi,y,z) \sim (\pi, \bar{y}, -z)},
\end{equation}
where $\bar{y}$ denotes the complex conjugate of $y\in S^1 \subset \C$ (the unit circle). $Y$ is a cylinder bundle over $S^1$  with the coordinate projection $Y \to S^1$ onto the first component $S^1=[-\pi, \pi]/(-\pi \sim \pi)$. The monodromy map $S^1 \times [-1, 1] \to S^1 \times [-1, 1]$ is given by $(y,z) \mapsto (\bar{y},-z)$. For the standard cylinder in $\R^3$ this monodromy map can be interpreted as the rotation by $\pi$ around a symmetry axis of the middle circle, see Figure~\ref{fig:cylmonodr}.}

\csere{With the exception of the two fixed points  $(y,z)=(1,0), (y,z)=(-1,0) \in S^1 \times [-1,1]$ of the monodromy map (on the picture these are the intersection points of the axis and the middle circle), pushing any other point around the base circle $S^1=[-\pi, \pi]/(-\pi \sim \pi)$ returns after two rounds, tracing out the orbit $[-\pi,\pi]\times \{(y,z),(\bar{y},-z)\}$. Define the oriented `topological longitude' $c$ on $ \partial Y $ as the closed orbit of any point.  Define the oriented `meridian' $m$ on $ \partial Y $ as one of the boundary components of an arbitrary cylinder fibre. The different choices for $c$ and for $m$, respectively, are homologous to each other in $\partial Y$.
See \cite[Section 3.]{NP2}, \cite[Section 5.2.]{gtezis}.}

\begin{figure}[h]
\centering
\resizebox{12cm}{12cm}{
\begin{picture}(0,0)%
\includegraphics{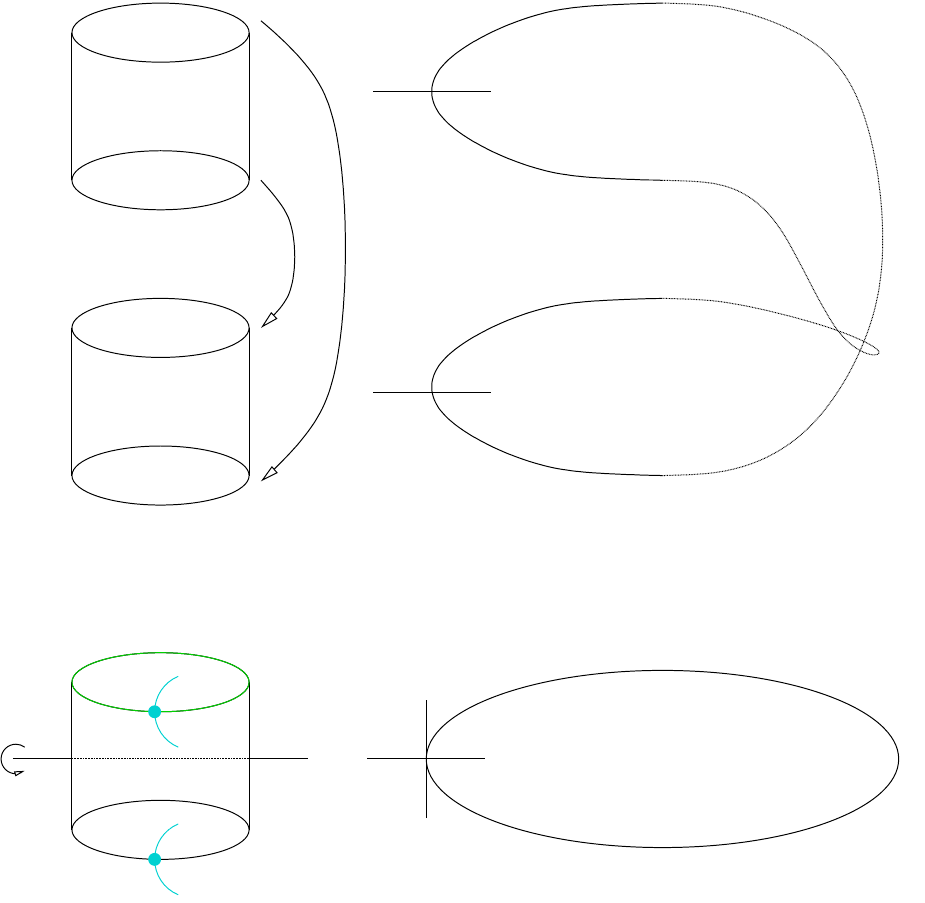}%
\end{picture}%
\setlength{\unitlength}{4144sp}%
\begin{picture}(7044,6862)(802,-6911)
\put(1261,-331){\makebox(0,0)[rb]{\smash{{\SetFigFont{14}{16.8}{\rmdefault}{\mddefault}{\updefault}{\color[rgb]{0,0,0}$|x|\to\infty$}%
}}}}
\put(1261,-1501){\makebox(0,0)[rb]{\smash{{\SetFigFont{14}{16.8}{\rmdefault}{\mddefault}{\updefault}{\color[rgb]{0,0,0}$|y| \to \infty$}%
}}}}
\put(1261,-3706){\makebox(0,0)[rb]{\smash{{\SetFigFont{14}{16.8}{\rmdefault}{\mddefault}{\updefault}{\color[rgb]{0,0,0}$|y| \to \infty$}%
}}}}
\put(1261,-2581){\makebox(0,0)[rb]{\smash{{\SetFigFont{14}{16.8}{\rmdefault}{\mddefault}{\updefault}{\color[rgb]{0,0,0}$|x|\to\infty$}%
}}}}
\put(7606,-601){\makebox(0,0)[lb]{\smash{{\SetFigFont{20}{24.0}{\rmdefault}{\mddefault}{\updefault}{\color[rgb]{0,0,0}$\gamma_i$}%
}}}}
\put(7831,-5461){\makebox(0,0)[lb]{\smash{{\SetFigFont{20}{24.0}{\rmdefault}{\mddefault}{\updefault}{\color[rgb]{0,0,0}$\Upsilon_j$}%
}}}}
\put(4636,-781){\makebox(0,0)[lb]{\smash{{\SetFigFont{14}{16.8}{\rmdefault}{\mddefault}{\updefault}{\color[rgb]{0,0,0}$x=0$}%
}}}}
\put(4636,-3076){\makebox(0,0)[lb]{\smash{{\SetFigFont{14}{16.8}{\rmdefault}{\mddefault}{\updefault}{\color[rgb]{0,0,0}$x=0$}%
}}}}
\put(4591,-5866){\makebox(0,0)[lb]{\smash{{\SetFigFont{14}{16.8}{\rmdefault}{\mddefault}{\updefault}{\color[rgb]{0,0,0}$xy=0$}%
}}}}
\put(991,-5281){\makebox(0,0)[lb]{\smash{{\SetFigFont{20}{24.0}{\rmdefault}{\mddefault}{\updefault}{\color[rgb]{0,.82,0}$m$}%
}}}}
\put(2251,-6856){\makebox(0,0)[lb]{\smash{{\SetFigFont{20}{24.0}{\rmdefault}{\mddefault}{\updefault}{\color[rgb]{0,.82,.82}$c$}%
}}}}
\end{picture}%
} 
\caption{The monodromy of the cylinder bundle $Y$ around a twisted component $\Upsilon_j$: as the point $q \in \Upsilon_j$ goes around, in a transverse slice $\C^2 \subset S^5_{\epsilon}$ the two branches of the associated immersion -- i.e. the coordinates $x$ and $y$ -- interchange, hence the two boundary components of the  cylinder $\{xy=\delta \}$  interchange as well; the meridian $m$ of the torus $\partial Y$ is a boundary component of an arbitrary cylinder fibre, while its longitude $c$ is the orbit of a generic point, rounding two times around $\Upsilon_j$ while $q$ goes around one time}
\label{fig:cylmonodr}
\end{figure}

\begin{thm}[{\cite[Prop. 4.1.2., Theorem 4.5.4., Cor. 4.6.3.]{NP2}, \cite[Prop. 5.3.1., Theorem 5.3.9., Cor. 5.3.11.]{gtezis}}]\label{th:milnper}
	One has an orientation preserving diffeomorphism
	\begin{equation}\label{eq:rag}  \partial F \simeq \left[ \left(\mathfrak{S} \setminus
	\bigcup_{i=1}^l {\rm int}(N_i)  \right)\cup \left(\bigcup_{i = \sigma (i) } Y_i \right) \right] 
	\Bigg / \phi \mbox{,} \end{equation}
	where each $ Y_i $ is diffeomorphic to $Y$, and the gluing $ \phi $ is induced by a
	collection $( \phi_i)_{i=1, \dots, l}$ of diffeomorphisms
	\[ \phi_{i}: \left\{ \begin{array}{ccc} \partial N_i \to -\partial N_{\sigma(i)}
	& \mbox{ if } & i \neq \sigma(i) \\
	\partial N_i \to \partial Y_i \hspace{1cm} & \mbox{ if } & i = \sigma(i). \\
	\end{array} \right.    \]
	Using the (homological)  trivialization of the tori $ \partial N_i $ and $ \partial Y_i $ determined by the pair (meri\-dian, topological longitude), each $ \phi_i $ is given by the matrix                             
	\begin{equation*}
	\left( \begin{array}{cc}
	-1 & \mathfrak{vi}_j \\
	0 & 1 \\
	\end{array} \right)
	\end{equation*}                          
	where $j= \{ i, \sigma (i) \} $.                      
\end{thm}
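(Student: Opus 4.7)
The strategy is to decompose the Milnor ball into a tubular neighborhood $\caln$ of the reduced singular locus $\Sigma$ and its complement, analyze $\partial F$ on each piece, and then reassemble. Choose $\caln$ compatibly so that $\Phi^{-1}(\caln) \cap \mathfrak{S}$ is exactly $\bigcup_i N_i$. On the complement of $\caln$ the germ $g$ is a submersion along $X$, so Ehresmann's theorem identifies $F \setminus \caln$ with a small isotopy of $X \setminus \caln$; intersecting with $S^5_\epsilon$ and pulling back along the embedding $\Phi|_{\mathfrak{S} \setminus \bigcup \mathrm{int}(N_i)}$ recovers the outside piece of \eqref{eq:rag}, equipped on each torus $\partial N_i$ with its natural meridian–longitude framing inherited from $\mathfrak{S}$.

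Near each component $\Sigma_j$ one uses that a transverse slice $\simeq \C^2$ sees $X$ locally as the wedge $\{xy = 0\}$ and $F$ as the cylinder $\{xy = \delta\}$, so $\partial F \cap \caln$ is a cylinder bundle over $\Sigma_j \cong S^1$. For an untwisted $j = \{i, \sigma(i)\}$ with $i \neq \sigma(i)$ the two local branches are globally labelled by the factors $g_1, g_2$, the bundle is trivial, and after collapsing the $I$-factor it mediates a direct gluing $\partial N_i \to -\partial N_{\sigma(i)}$. For twisted $i = \sigma(i)$ the $\pi$-monodromy swapping the two local branches transports the cylinder into the quotient $Y_i \simeq Y$ of Figure~\ref{fig:cylmonodr} with its canonical $(m, c)$ framing. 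This establishes the shape of \eqref{eq:rag} and identifies the gluing loci as disjoint tori.

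The heart of the proof is the matrix of $\phi_i$. The $(1,1)$-entry $-1$ is orientational: a meridian of $\partial N_i$ bounds a disc transverse to $\gamma_i$ in $N_i$, and on the $\partial F$ side that disc meets the partner meridian on $\partial N_{\sigma(i)}$ (or $\partial Y_i$) so that, together with an annular cross-section of the fibre cylinder $\{xy = \delta\}$, they cobound an oriented annulus, which forces opposite co-orientations. For the entries determining the twist I would compare two framings of $\partial N_i$: the Seifert (topological) longitude of $\gamma_i$ in $\mathfrak{S}$, and the longitude of the Milnor fibre cylinder pulled back through $\Phi$. The transverse section $H$ provides smooth sections of the two local branches of $X$ along $\Sigma_j$, and the identity \eqref{eq:beta} measures the failure of these sections to be horizontal with respect to the Milnor fibration of $g$; the winding number of $\beta_1 \beta_2$ along $\Sigma_j$ is therefore the rotation of the Milnor fibre longitude relative to the $H$-induced framing, which is precisely $\mathfrak{v}_j$. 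Adding the intersection-theoretic corrections $\lambda_i$ (and $\lambda_{\sigma(i)}$ when $i \neq \sigma(i)$) that compare the $H$-induced framing of $\gamma_i$ with its Seifert framing yields the total twist $\mathfrak{vi}_j$ in the upper-right entry, while the $(2,2)$-entry $1$ follows because the longitude on each side either matches or doubly covers $\Sigma_j$ without extra orientation reversal.

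I expect the main obstacle to be this last computation — showing that the analytically defined winding number of $\beta_1\beta_2$ equals the topological twist of the Milnor fibre longitude. It requires a careful local trivialization of the cylinder bundle compatible with both $H$ and the splitting $g = g_1 g_2$, and tracking how the fibre $\{g = \delta\}$ rotates inside the transverse slice as one traverses $\Sigma_j$. A useful sanity check is the special case where $H$ is chosen so that $\mathfrak{v}_j = 0$, in which case the remaining twist is purely intersection-theoretic and \eqref{eq:rag} reduces to a standard plumbing description for non-isolated hypersurface singularities.
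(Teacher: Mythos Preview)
This theorem is not proved in the present paper. It appears in Subsection~\ref{ss:boundmilnor} as a preliminary result imported verbatim from \cite[Prop.~4.1.2., Theorem~4.5.4., Cor.~4.6.3.]{NP2} and \cite[Prop.~5.3.1., Theorem~5.3.9., Cor.~5.3.11.]{gtezis}; no proof is given here, only an informal outline of the construction (the replacement of $\{xy=0\}$ by $\{xy=\delta\}$ on transverse slices, and the remark that the vertical indices are computed in \cite{NP2} via the aid germ $H$ and the Taylor expansions of $H$ and $g$). So there is no ``paper's own proof'' against which to compare your proposal.

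That said, your sketch is consistent with the description the paper does give and with what one would expect the argument in \cite{NP2} to look like: the decomposition into a neighborhood of $\Sigma$ and its complement, Ehresmann on the outside, the cylinder-bundle picture over each $\Upsilon_j$, and the identification of the twist with $\mathfrak{v}_j$ plus the $\lambda_i$ corrections. Your own caveat about the ``main obstacle'' is accurate: the step equating the order of $\beta_1\beta_2$ with the topological twist of the Milnor-fibre longitude is exactly the nontrivial content of \cite[Theorem~4.5.4.]{NP2}, and your paragraph does not yet supply it --- it names the ingredients (local trivialization compatible with $H$ and the splitting $g=g_1g_2$, tracking the rotation of $\{g=\delta\}$ around $\Sigma_j$) but does not carry out the computation. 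If you want a self-contained proof you will need to fill that in; otherwise, citing \cite{NP2} as the paper does is the appropriate move.
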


Using Theorem \ref{th:milnper} it is possible to present $ \partial F$ as a plumbed $3$-manifold, see 
\cite[Theorem 4.8.1.]{NP2}, \cite[Theorem 5.3.18.]{gtezis}.

\begin{cor}[{\cite[Corollary 4.6.4.]{NP2}, \cite[Cor. 5.3.12.]{gtezis}}]
The integer $ \mathfrak{vi}_j $ does not depend on the choice of $H$, thus it is an invariant of $g$ and $ \Sigma_j$.
\end{cor}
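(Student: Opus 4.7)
The strategy is to show that the integer $\mathfrak{vi}_j$ can be recovered from the intrinsic topology of $\partial F$ together with the canonical decomposition provided by Theorem~\ref{th:milnper}, none of which involves the transverse section $H$.

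First, I would verify that on the right-hand side of \eqref{eq:rag} every piece except the number $\mathfrak{vi}_j$ itself is defined purely topologically from $g$: the sphere $\mathfrak{S}$, the double-point link $\gamma = \bigsqcup_i \gamma_i$, the tubular neighborhoods $N_i$, the cylinder bundles $Y_i$ (which serve as intrinsic local models around the components $\Upsilon_j$ of the link of $\Sigma$), and the (meridian, topological longitude) bases of the boundary tori $\partial N_i$ and $\partial Y_i$. The meridian is characterized as the curve bounding a meridian disk in the relevant solid tube, and the topological longitude as the homologically generating curve with zero linking number against the core, so both are invariants of the ambient topology.

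Next, given two transverse sections $H$ and $H'$ with associated indices $\mathfrak{vi}_j(H)$ and $\mathfrak{vi}_j(H')$, Theorem~\ref{th:milnper} furnishes two realizations of $\partial F$ as the same abstract quotient, with $2\times 2$ gluing matrices of the stated shape. One can then read off $\mathfrak{vi}_j$ as a specific homological datum intrinsic to $\partial F$: after gluing, the two pieces meet along a common torus $T_j \subset \partial F$, and $\mathfrak{vi}_j$ equals the coefficient of the $\partial Y_i$-meridian (or $\partial N_{\sigma(i)}$-meridian in the untwisted case) when one expresses the Seifert longitude of $\gamma_i$, viewed inside $T_j$, in the canonical basis coming from the opposite side. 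Since $T_j$ and the two canonical bases are determined by $g$ alone, this integer is $H$-independent, which gives $\mathfrak{vi}_j(H) = \mathfrak{vi}_j(H')$.

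The main obstacle is to ensure that the gluing $\phi_i$ in \eqref{eq:rag} is genuinely determined up to isotopy by the way the two pieces assemble into $\partial F$, and is not subject to residual ambiguities that could shift $\mathfrak{vi}_j$. Equivalently, one has to rule out orientation-preserving self-diffeomorphisms of $Y_i$ or of $\mathfrak{S} \setminus \mathrm{int}(N_i)$ that respect the canonical framings on the boundary yet act nontrivially on the index. The homological reformulation of the previous paragraph sidesteps this difficulty, since it packages $\mathfrak{vi}_j$ as the single integer controlling how the Seifert framing of $\gamma_i$ sits inside the intrinsic torus $T_j$ relative to the canonical meridian from the opposite side; the twisted case $i=\sigma(i)$ requires a separate but analogous verification using the cylinder-bundle structure of $Y_i$ described in Figure~\ref{fig:cylmonodr}.
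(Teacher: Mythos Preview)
The paper does not give its own proof; the corollary is stated in the preliminaries (Subsection~\ref{ss:boundmilnor}) as a result cited from \cite{NP2} and \cite{gtezis}, placed immediately after Theorem~\ref{th:milnper} because it is meant to be the direct consequence of that theorem. Your approach is correct and is exactly the implicit argument behind the citation: the left-hand side $\partial F$ of \eqref{eq:rag} and every ingredient on the right except the single integer $\mathfrak{vi}_j$ are defined without reference to $H$, so $\mathfrak{vi}_j$ is forced once one knows that the gluing map itself (not merely the diffeomorphism type of the result) is determined. Your homological extraction of $\mathfrak{vi}_j$ from the common torus and the two canonical (meridian, longitude) bases is the right way to pin this down, and the concern you raise about self-diffeomorphisms of the pieces is legitimate but, as you note, dissolves once one works at the level of $H_1$ of the fixed boundary torus.

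One small point worth making explicit: the statement of Theorem~\ref{th:milnper} as quoted here asserts only an abstract orientation-preserving diffeomorphism, whereas your argument uses that the pieces $\mathfrak{S}\setminus\bigcup_i \mathrm{int}(N_i)$ and $Y_i$ sit inside $\partial F$ in an $H$-independent way, so that the torus $T_j$ is a well-defined subset of $\partial F$. This is indeed how the diffeomorphism is constructed in \cite{NP2} (the complement of the $N_i$ is identified via $\Phi$ with the part of $\partial F$ away from a tubular neighborhood of $\Upsilon$, and the $Y_i$ are the transversal local Milnor fibers near $\Upsilon_j$), so the decomposition is geometric and your argument goes through; but you are relying on the construction behind the theorem, not just its stated conclusion.
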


In a special case the sum of the vertical indices is determined by the following theorem. 

\begin{thm}[{\cite[Proposition 5.1.1.]{NP2}, \cite[Prop. 5.4.1.]{gtezis}}]
When $ \Phi $ is a corank--$1$ germ with $T( \Phi ) =0$, then 
\begin{equation*}
\sum_{j \in J} \mathfrak{vi}_j = -  \sum_{i \neq k} D_i \cdot D_k - C( \Phi ) .
\end{equation*}
\end{thm}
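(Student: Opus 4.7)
The plan is to reduce the sum $\sum_{j \in J} \mathfrak{vi}_j$ into two parts that can be handled separately, then exploit the corank--$1$ normal form together with the triple-point-free hypothesis.

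First, I would observe that by the case split in the definition of $\mathfrak{vi}_j$, each index $i \in \{1, \ldots, l\}$ contributes exactly one copy of $\lambda_i$ to the total sum (either via the twisted case $j = \{i\}$ or via one of the two halves of an untwisted pair $j = \{i, \sigma(i)\}$). This yields the clean decomposition
\begin{equation*}
\sum_{j \in J} \mathfrak{vi}_j \;=\; \sum_{i=1}^l \lambda_i \;+\; \sum_{j \in J} \mathfrak{v}_j.
\end{equation*}
The first sum is immediate from the definition: summing $\lambda_i = -\sum_{k \neq i} D_k \cdot D_i - D_\sharp \cdot D_i$ over $i$ gives
\begin{equation*}
\sum_{i=1}^l \lambda_i \;=\; -\sum_{i \neq k} D_i \cdot D_k \;-\; D_\sharp \cdot D.
\end{equation*}
Hence the statement reduces to the identity $\sum_{j \in J} \mathfrak{v}_j = D_\sharp \cdot D - C(\Phi)$ under the given hypotheses.

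To establish this identity I would exploit the corank--$1$ normal form: up to $\mathcal{A}$-equivalence one may write $\Phi(u,v) = (u, p(u,v), q(u,v))$ with $p, q \in \mathfrak{m}^2$. In this form cross caps appear precisely where the Jacobian ideal degenerates, triple points are excluded by $T(\Phi)=0$, and each branch of $X = \mathrm{im}(\Phi)$ along a component $\Sigma_j$ of the singular locus admits an explicit parametric description in terms of $p$ and $q$. Choosing a generic linear transverse section $H$, the local splitting $g = g_1 g_2$ along $\Sigma_j \setminus \{0\}$ and the coefficient germs $\beta_1, \beta_2$ of equation~\eqref{eq:beta} acquire effective expressions in the parameter $\tau$ of $\Sigma_j$.

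The main obstacle is the local analysis of the Laurent order of the (well-defined) product $\beta_1 \beta_2$ at $0$. I would show that this order splits into two contributions: (i) a nonnegative part measuring the vanishing of $T_1(H)$ along $\Sigma_j$, whose total sum across all $j$ equals $D_\sharp \cdot D$ via the pullback by $\Phi$; and (ii) a pole contribution at each cross cap lying on the closure of $\Sigma_j \setminus \{0\}$, arising because the two local branches of $X$ coalesce at cross caps, forcing $T_1(g_1)$ and $T_1(g_2)$ to become linearly dependent so that $\beta_1$ or $\beta_2$ must blow up. The hypothesis $T(\Phi) = 0$ is what guarantees that no further pole contributions arise from triple points, so the accounting is local at cross caps only, and the total pole contribution across all $j \in J$ equals $-C(\Phi)$. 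Summing (i) and (ii) yields $\sum_{j \in J} \mathfrak{v}_j = D_\sharp \cdot D - C(\Phi)$, which combined with the $\lambda$-sum above proves the theorem.
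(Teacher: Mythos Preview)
Your decomposition $\sum_{j}\mathfrak{vi}_j = \sum_i \lambda_i + \sum_j \mathfrak{v}_j$ and the computation $\sum_i \lambda_i = -\sum_{i\neq k} D_i\cdot D_k - D_\sharp\cdot D$ are correct and match the paper. The problem is your plan for the remaining identity $\sum_j \mathfrak{v}_j = D_\sharp\cdot D - C(\Phi)$.

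The proposed mechanism for the pole contribution is based on a misconception. Cross caps and triple points are features of a \emph{stable deformation} of $\Phi$; the germ $\Phi$ itself has none of them away from the origin. By the Mather--Gaffney criterion, $\Phi$ restricted to $\C^2\setminus\{0\}$ is a stable immersion with only transverse double points, so along each $\Sigma_j\setminus\{0\}$ the two branches of $X$ never coalesce and $T_1(g_1),\,T_1(g_2)$ stay linearly independent. There are no poles of $\beta_1\beta_2$ induced by cross caps on $\Sigma_j\setminus\{0\}$, and the hypothesis $T(\Phi)=0$ does not operate by ``excluding pole contributions from triple points'' either. Thus the heart of your argument, items (i) and (ii), does not go through as stated.

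What the paper actually does is much cleaner and explains how the hypotheses are really used. Corank $1$ together with $T(\Phi)=0$ gives the specific normal form $\Phi(s,t)=(s,t^2,\,t\,d(s,t))$ with $d(s,t)=h(s,t^2)$, so that $g(x,y,z)=y\,h(x,y)^2-z^2$. One then chooses the transverse section $H(x,y,z)=z$. With this choice the splitting $g=(\sqrt{y}\,h-z)(\sqrt{y}\,h+z)$ along $\Sigma\setminus\{0\}$ has $T_1(H)=z$ already expressed with \emph{constant} coefficients $\beta_1,\beta_2$, so every $\mathfrak{v}_j=0$. At the same time $(H\circ\Phi)(s,t)=t\,d(s,t)$, hence $D_\sharp=\{t=0\}$ and $D_\sharp\cdot D=\dim\mathcal{O}_{(\C^2,0)}/(t,d(s,t))=C(\Phi)$ because $(t,d)$ coincides with the ramification ideal in this normal form. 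The identity $\sum_j\mathfrak{v}_j = D_\sharp\cdot D - C(\Phi)$ thus holds trivially as $0=0$ for this choice of $H$, and combining with your $\lambda$-sum finishes the proof. The moral is: exploit the freedom in choosing $H$ rather than analyse $\mathfrak{v}_j$ for a generic section.
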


In fact this formula is a special case of equation~\eqref{eq:osszeg}. A brief summary of its original algebraic proof can be found in Subsection~\ref{ss:examp}. The proof of the general formula \eqref{eq:osszeg} in Section~\ref{s:topref} is based on the topological description of the vertical indices $ \mathfrak{vi}_j $. We also show that the set of vertical indices is a topological invariant of $\Phi$\csere{, and based on this we prove that the diffeomorphism type of $\partial F$ is also a topological invariant of $\Phi$}.

\section{Decomposition and generalization of \texorpdfstring{$L$}{L}}\label{s:decL}

\subsection{Preliminary summary of the section}
In Proposition~\ref{pr:Leq} the different definitions of the Ekholm--Sz\H{u}cs invariant are proved to be equivalent, that is, $L_1(f)=-L_2(f)$ holds for stable immersions $f: S^3 \looparrowright \R^5$. However, the proof is based on the behavior of the two versions of $L$ along regular homotopies, and it does not show any direct relation between them, in particular between the normal vector fields used in their definition. Actually those are different types of framings on the double point curve $\gamma$, since the Seifert framing is a nowhere zero section of the normal bundle $\nu(\gamma)$ of $\gamma \subset S^3$, while the global normal framing of $f$ is a nowhere zero section of $\nu(f)|_{\gamma}$, that is, the normal bundle of $f$ restricted to $\gamma$. \csere{The main goal of this section is to compare the two framings directly by introducing an infinite family of common generalizations. This argument leads to a new proof of $L_1(f)=-L_2(f)$, which also highlights the reason of the negative sign.}

\csere{First we show that for a  pair  of  components $\gamma_i$ and $\gamma_{\sigma(i)}$ of $\gamma$ with the same image $f(\gamma_i)=f(\gamma_{\sigma(i)})=\Upsilon_j$, the sections of the normal bundle $\nu(\gamma_i)$ of $\gamma_i \subset S^3$ can be identified with the sections of the restriction $\nu(f)|_{\gamma_{\sigma(i)}}$ of the normal bundle of $f$ to $\gamma_{\sigma(i)}$ (see Proposition~\ref{pr:isom}). In the rest of Subsection~\ref{ss:dirpr} we introduce integer invariants for these sections and we clarify their relations. To characterize the nowhere zero sections $v$ of $\nu(\gamma_i)$ in the way of Subsection~\ref{ss:framlink},  we introduce the integer $a(v)$ (respectively, $a_i(v)$) that measures its linking with $\gamma$ (respectively, with $\gamma_i$). Alternatively, considering $v$ as a nowhere zero section of $\nu(f)|_{\gamma_{\sigma(i)}}$, we introduce another integer $b(v)$ measuring the twisting of $v$ compared to the global normal framing of $f$ restricted to $\gamma_{\sigma(i)}$.}

\csere{We show that the integer invariants $a$, $a_i$ and $b$ are equivalent characterizations of the sections which differ only in a constant shift by an integer. That is, the differences $a(v)-b(v)$ and $a(v)-a_i(v)$ do not depend on $v$ (see Corollary~\ref{co:diff1} and Proposition~\ref{pr:of}). We show that the difference of $a$ and $a_i$ is an invariant of the link $\gamma$, while the difference of $a$ and $b$ is an invariant of the immersion $f$, associated to the component $\Upsilon_j$. Moreover,  $a(\mathcal{S}_i)=0$  holds for the Seifert framing $\mathcal{S}_i$ and $b(\mathcal{N}_i)=0$ holds for the restriction of the global normal framing $\mathcal{N}_i$, therefore, the difference $a-b$ measures the relation of the two special framings (see Proposition~\ref{pr:of}). Denoting this difference by $\Delta_j(f)$ we show that
\begin{equation*}
\Delta_j(f)=a(v)-b(v)=a(\mathcal{N}_i)=a(\mathcal{N}_{\sigma(i)})=-b(\mathcal{S}_i)=-b(\mathcal{S}_{\sigma(i)}),
\end{equation*}
see Corollary~\ref{co:diff2}.}

\csere{By choosing nowhere zero sections $v$  of $\nu(\gamma_i)$ and $w$ of $\nu(\gamma_{\sigma(i)})$, $\Upsilon_j$ can be shifted slightly in the direction of the vector field $df(v)+df(w)$ to make it disjoint from $f(S^3)$. In Subsection~\ref{ss:nontrivpush} we introduce the integer invariants $c_j(v, w)$ defined as the linking number of the shifted copy of $f(\gamma_i)$ and $f(S^3)$ in $S^5$. In this way we introduce a family of generalizations of $L$ for each component of $f(\gamma)$. (For twisted components, that is, when $i=\sigma(i)$, the construction is a bit more complicated, in this case the `double pushing out' of $\Upsilon_j$ will be defined and will give rise to its linking number $d_j(v, w) $ with the image of $f$ -- see Subsection~\ref{ss:twistedpush}).}

\csere{Then we characterize $c_j(v, w)$ in terms of $a$ and $b$ of $v$, $w$ and $\Delta_j(f)$. The key point is that $c_j(\mathcal{S}_i, \mathcal{N}_i)=0$, see Lemma~\ref{le:kitolt} and Corollary~\ref{co:fels}. We deduce from this observation that
\begin{equation*}
c_j(v, w)=a(v)+a(w)-\Delta_j(f), 
\end{equation*}
see Corollary~\ref{co:ckif}.
We prove a similar result  for the twisted components in Subsection~\ref{ss:twistedpush}.}

\csere{As we will see in Corollary~\ref{co:L12} (see also  Remark~\ref{re:deltas}), the first version $L_1(f)$ of the Ekholm--Sz\H{u}cs invariant $L$ is equal to the sum of the integers 
\begin{equation*}
c_j(\mathcal{S}_{i}, \mathcal{S}_{\sigma(i)})=-\Delta_j(f)
\end{equation*}
for all components, using the modified version for twisted components. Similarly we show that $L_2(f)$ is the sum of the integers
\begin{equation*}
c_j(\mathcal{N}_{i}, \mathcal{N}_{\sigma(i)})=\Delta_j(f),
\end{equation*}
hence we get a direct proof for $L_1=-L_2$ (see Corollary~\ref{co:L1L2equal}).}

In the following we build up the ``linking calculus'' summarized above step by step, finally concluding that $L_1=-L_2$.

\subsection{Normal vector fields along the double point curve} \label{ss:dirpr} 

Recall that the curve of the double points $\gamma \subset S^3$ of a stable immersion $f: S^3 \looparrowright \R^5$   is equipped with the involution $ \iota: \gamma \to \gamma $ such that $f(p)=f(\iota(p))$ holds for all $p \in \gamma$. The following concepts are introduced analogously to the holomorphic case in Subsection~\ref{ss:imdoub}. Let $ \gamma = \bigsqcup_{i \in \{1, \dots , l \}} \gamma_i $ be the irreducible decomposition of $ \gamma $. $f$ induces an order-$2$ permutation $ \sigma: \{ 1, \dots, l \} \to \{ 1, \dots, l \} $ such that $ f ( \gamma_i) = f( \gamma_{ \sigma(i)})=:\Upsilon_{j} $, where $ j= \{ i, \sigma (i) \} $. Let $J$ denote the index set 
$ J= \{ j=\{i, \sigma (i) \} \ | \ i=1 \dots l \} $, then set $ \Upsilon:= f( \gamma) = \bigsqcup_{ j \in J} \Upsilon_j $. If $ i \neq \sigma (i) $, then $ j= \{ i, \sigma (i) \} $ is the index of an \emph{untwisted} component, $ \Upsilon_j $ is trivially covered by $\gamma_i \sqcup \gamma_{\sigma(i)}$. For the \emph{twisted} components $ i= \sigma (i) $, $j=\{ i \} $, and $ f|_{\gamma_i}: \gamma_i \to \Upsilon_j $ is a nontrivial double cover of circles.

Let $\nu(f)$ denote the normal bundle of $f$, and let $\nu(\gamma)$ denote the normal bundle of the inclusion $\gamma \subset S^3$. Both vector bundles are  defined as quotients, namely, $\nu(\gamma)=TS^3|_{\gamma}/T \gamma$ is a bundle over $\gamma$ and $\nu (f) =f^*(T \R^5)/TS^3 $ is a bundle over $S^3$.

\begin{prop}\label{pr:isom}
There is a natural isomorphism of vector bundles 
\begin{equation*}
\Xi:   \iota^* (\nu (f) |_{ \gamma }) \cong   \nu(\gamma) .
\end{equation*}
\end{prop}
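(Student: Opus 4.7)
The plan is to construct $\Xi$ pointwise at each $p \in \gamma$ as a linear isomorphism of fibres, and then observe that the construction depends smoothly on $p$, so it assembles into a vector bundle isomorphism between two rank--$2$ bundles.

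The key input is the transversality of the two branches of $f$ at any double value, which is part of what it means for $f$ to be a stable immersion. At $p \in \gamma$ with $q=f(p)=f(\iota(p))$, transversality says $df_p(T_p S^3)+df_{\iota(p)}(T_{\iota(p)} S^3)=T_q\R^5$. I would consider the composition
\[
\alpha_p \colon T_p S^3 \xrightarrow{\;df_p\;} T_q\R^5 \longrightarrow T_q\R^5 \big/ df_{\iota(p)}(T_{\iota(p)} S^3) = \nu(f)_{\iota(p)}\mbox{.}
\]
Transversality immediately gives that $\alpha_p$ is surjective and that its kernel $df_p(T_p S^3)\cap df_{\iota(p)}(T_{\iota(p)} S^3)$ has dimension $3+3-5=1$.

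Next, I would identify this $1$-dimensional intersection with $df_p(T_p\gamma)$. Since $f\circ\iota=f$ on $\gamma$, differentiating along $\gamma$ gives $df_p(T_p\gamma)=df_{\iota(p)}(T_{\iota(p)}\gamma)$, which is a $1$-dimensional subspace contained in the intersection, and hence coincides with it. Injectivity of $df_p$ then yields $\ker\alpha_p = T_p\gamma$, so $\alpha_p$ factors through an isomorphism $\nu(\gamma)_p = T_p S^3/T_p\gamma \xrightarrow{\sim} \nu(f)_{\iota(p)}$. I would define $\Xi_p$ to be the inverse of this isomorphism, noting that no choices were made in the construction, and that the twisted and untwisted cases are handled uniformly because $\Xi_p$ is defined purely in terms of the fibres over $p$ and $\iota(p)$.

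Finally, the ingredients ($df$, the quotient projections, and the tangent line $T_p\gamma$) all depend smoothly on $p$, so the fibrewise inverses assemble into a smooth morphism of vector bundles $\Xi\colon \iota^*(\nu(f)|_\gamma)\to \nu(\gamma)$; being a fibrewise isomorphism between rank--$2$ bundles it is automatically a vector bundle isomorphism. The only substantive step is the identification of the transverse intersection $df_p(T_p S^3)\cap df_{\iota(p)}(T_{\iota(p)} S^3)$ with $df_p(T_p\gamma)$, which is where both transversality (for the dimension count) and the very definition of $\gamma$ as the double point set (for the containment) enter. The rest is formal linear algebra and a routine smoothness check, so I do not expect any serious obstacle.
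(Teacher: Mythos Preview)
Your proposal is correct and follows essentially the same approach as the paper: both use transversality of the two branches at a double value $q=f(p)=f(\iota(p))$ to identify $\nu(\gamma)_p$ with $\nu(f)_{\iota(p)}$. The only difference is presentational---the paper writes down an explicit splitting $T_q\R^5 \cong df_p(\nu_p(\gamma)) \oplus df_{\iota(p)}(\nu_{\iota(p)}(\gamma)) \oplus T_q\Upsilon$ and defines $\Xi$ by projection onto the middle summand followed by $df_{\iota(p)}^{-1}$, whereas you work purely with quotients and identify the kernel of $T_pS^3 \to \nu(f)_{\iota(p)}$ as $T_p\gamma$; unwinding shows these give the same map.
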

 
\begin{proof} At a double value $ q=f(p)=f(\iota(p)) $ the tangent space of $ \R^5 $ decomposes as
\begin{equation}\label{eq:felb} T_q \R^5 \cong df_p(\nu_p ( \gamma )) \oplus df_{ \iota(p)}(\nu_{\iota(p)} ( \gamma)) \oplus T_q \Upsilon .
\end{equation}

According to this, each vector $w \in T_q \R^5 $ can be written in a unique way as 
\begin{equation*} w=w^{ (p)} + w^{(\iota (p))} + w^{(\Upsilon)} .
\end{equation*}
An element of $\nu_{p} (f)  $ is represented by a vector $ v(p) \in T_{q} \R^5$. Define $ \Xi([v(p)])$  as the residue class of
\begin{equation*}
df_{\iota(p)}^{-1}\left(v(p)^{(\iota(p))}\right)  \in T_{\iota(p)} S^3,  
\end{equation*}
in $ \nu_{\iota(p)}(\gamma)$ (see Figure~\ref{fig:map}). It can be shown that $\Xi$ is well defined, the resulting residue class does not depend on the choices of representatives.

\begin{figure}[h]

\centering
\resizebox{15cm}{6cm}{
\begin{picture}(0,0)%
\includegraphics{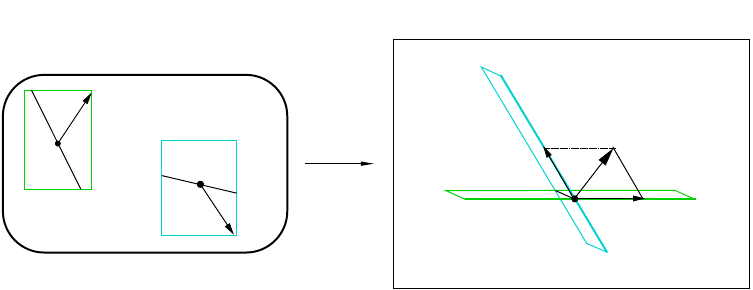}%
\end{picture}%
\setlength{\unitlength}{4144sp}%
\begin{picture}(5722,2192)(-1769,-481)
\put(1505,1097){\makebox(0,0)[lb]{\smash{{\SetFigFont{12}{14.4}{\rmdefault}{\mddefault}{\updefault}{\color[rgb]{1,1,1}$D^4$}%
}}}}
\put(-1211,1295){\makebox(0,0)[lb]{\smash{{\SetFigFont{12}{14.4}{\rmdefault}{\mddefault}{\updefault}{\color[rgb]{0,0,0}$S^3$}%
}}}}
\put(2611, 29){\makebox(0,0)[lb]{\smash{{\SetFigFont{14}{16.8}{\rmdefault}{\mddefault}{\updefault}{\color[rgb]{0,0,0}$q$}%
}}}}
\put(-350,378){\makebox(0,0)[lb]{\smash{{\SetFigFont{12}{14.4}{\rmdefault}{\mddefault}{\updefault}{\color[rgb]{0,0,0}$\iota(p)$}%
}}}}
\put(-1400,464){\makebox(0,0)[lb]{\smash{{\SetFigFont{12}{14.4}{\rmdefault}{\mddefault}{\updefault}{\color[rgb]{0,0,0}$p$}%
}}}}
\put(2324,293){\makebox(0,0)[b]{\smash{{\SetFigFont{12}{14.4}{\rmdefault}{\mddefault}{\updefault}{\color[rgb]{0,0,0}$\Upsilon$}%
}}}}
\put(3017, 17){\makebox(0,0)[lb]{\smash{{\SetFigFont{12}{14.4}{\rmdefault}{\mddefault}{\updefault}{\color[rgb]{0,0,0}$v(p)^{(p)}$}%
}}}}
\put(-1437,915){\makebox(0,0)[lb]{\smash{{\SetFigFont{12}{14.4}{\rmdefault}{\mddefault}{\updefault}{\color[rgb]{0,0,0}$\gamma$}%
}}}}
\put( 57,195){\makebox(0,0)[lb]{\smash{{\SetFigFont{12}{14.4}{\rmdefault}{\mddefault}{\updefault}{\color[rgb]{0,0,0}$\gamma$}%
}}}}
\put(2701,704){\makebox(0,0)[b]{\smash{{\SetFigFont{12}{14.4}{\rmdefault}{\mddefault}{\updefault}{\color[rgb]{0,0,0}$\overset{\Xi}{\mapsfrom}$}%
}}}}
\put(2521,704){\makebox(0,0)[rb]{\smash{{\SetFigFont{12}{14.4}{\rmdefault}{\mddefault}{\updefault}{\color[rgb]{0,0,0}$v(p)^{(\iota(p))}$}%
}}}}
\put(2881,704){\makebox(0,0)[lb]{\smash{{\SetFigFont{12}{14.4}{\rmdefault}{\mddefault}{\updefault}{\color[rgb]{0,0,0}$v(p)$}%
}}}}
\put(1465,1588){\makebox(0,0)[lb]{\smash{{\SetFigFont{14}{16.8}{\rmdefault}{\mddefault}{\updefault}{\color[rgb]{0,0,0}$\mathbb R^5$}%
}}}}
\end{picture}%
} 
\caption{The decomposition of normal vectors of $\Upsilon$ and the map $\Xi$ \csere{drawn in $\R^5$}}
\label{fig:map}
\end{figure}
    
\end{proof}
  
The isomorphism $ \Xi $ induces a bijection between the sections of the bundles $\nu(f)|_{\gamma}$ and $\nu(\gamma)$, that is, between the normal vector fields of $f$ along $ \gamma $ and  the normal vector fields of $ \gamma \subset S^3 $. The corresponding vectors belong to different base points: a normal vector of $f$ at $ p \in \gamma $ determines a normal vector of $ \gamma \subset S^3$ at $ \iota(p) \in \gamma $.

In the following we mostly use nowhere zero sections of the above bundles. For any link $C \subset S^3$, we refer to the nowhere zero sections of its normal bundle $\nu(C)$ as \emph{(normal) framings of $C$}, and to the nowhere zero sections of $\nu(f)|_C$ as \emph{(normal) framings of $f$ along $C$}.

 \begin{rem}\label{re:loc} The framings of $\gamma$ locally look the same in the following sense. A neighborhood $U_q$ of a point $f(p)=f(\iota(p))=q \in \Upsilon_j$ in $\R^5$ can be identified with  $T_q \R^5$, endowed with its decomposition \eqref{eq:felb}. 
Given a framing $v$ of $\gamma_i$ and $w$ of $\gamma_{\sigma(i)}$ and a small positive integer $\delta$, there is a local chart of $(\R^5, q)$, identifying $U_q$ with $ (\C \times \C \times \R, 0)$, such that $ f(S^3)= \{(x,y, \tau) \ | \ xy=0\}$,  $\Upsilon=\{(0,0,\tau)\}$ and
\begin{equation}\label{eq:loktriv}
\begin{aligned}
f(\gamma_i +\delta v)&=\{(1, 0, \tau)\}, \\
f(\gamma_{\sigma(i)}+\delta w)&=\{(0, 1, \tau) \}.
\end{aligned}
\end{equation}
By Proposition~\ref{pr:isom} and the identification $T_q \R^5 \simeq U_q$, the same chart provides local charts of $\nu(f)|_{\gamma_i}$ and  $\nu(f)|_{\gamma_{\sigma(i)}}$. Indeed, $\nu(f)|_{\gamma_i} \cong \nu(\gamma_{\sigma(i)})$ can be identified with the set of elements $(0, y, \tau)$ of $\C \times \C \times \R$, and $\nu(f)|_{\gamma_{\sigma(i)}} \cong \nu(\gamma_i)$ can be identified with $\{(x, 0, \tau)\} \subset \C \times \C \times \R$ 
over $\Upsilon_j \cap U_q$.
\end{rem}

We introduce the following notations for special vector fields. Let $\mathcal{N}$ denote the global normal framing, that is, the homotopically unique normal framing of $f$, which is a nowhere zero section of $\nu(f)$ over $S^3$. Let $\mathcal{N}_i:=\mathcal{N}|_{ \gamma_i } $ denote its restriction. Let $\mathcal{S}$ be the homotopically unique Seifert framing of $ \gamma $, and $ \mathcal{S}_i :=\mathcal{S}|_{ \gamma_i } $. Let $s_i $ be the homotopically unique Seifert framing of $ \gamma_i $. 

Let $v$ be a normal framing of $ \gamma_i \subset S^3 $. We associate two integer invariants to $v$ to measure its global twisting. Define
\csere{\begin{equation}\label{eq:ai}
a_i(v) = \lk_{S^3} (\gamma_i, \gamma_i+ \delta v )
\end{equation}}
with $ 0 < \delta $ sufficiently small, equivalently, $ a_i(v)= [\gamma_i +\delta v] \in H_1 (S^3 \setminus \gamma_i , \Z) \cong \Z  $. The other one is 
\begin{equation*}
a(v) = \lk_{S^3} (\gamma, \gamma_i+ \delta v )
\end{equation*}
 with $ 0< \delta $ sufficiently small. (There is a $ 0 < \delta_0$ such that for $ 0 < \delta < \delta_0 $ the linking numbers do not depend on the choice of $ \delta $.)
 
Let $E(\nu(f))$ be the total space of the bundle $\nu(f)$, and let $E_0(\nu(f))$ denote the complement of the zero section $\underline{0}$. Note that $E(\nu(f))$ is \csere{homotopy} equivalent to $S^3$ and in particular it is  simply connected. Let $w$ be a normal framing of $f$ along $ \gamma_i $. We associate the integer 
\begin{equation*}
 b(w) = \lk_{E(\nu(f))} ( \underline{0}, w),
 \end{equation*}
or, equivalently, $ b(w) = [w] \in H_1 (E_0 ( \nu (f)), \Z ) \cong \Z $  (choosing the isomorphism compatible with the orientations).

Using the isomorphism $ \Xi: \nu(f)|_{ \gamma_{i}} \cong \nu (\gamma_{ \sigma (i)} )  $, we write $ b(v):= b( \Xi^{-1} (v)) $ for a normal framing $v$ of $\gamma_{\sigma(i)}$ as well as $ a_{\sigma(i)} (w) := a_{\sigma(i)} ( \Xi(w)) $ and $ a(w) := a( \Xi (w)) $ for a normal framing $w$ of $f$ along $\gamma_i$.

\begin{prop}\label{pr:of}
\phantom{Here be text, otherwise links break.}
\begin{enumerate}[(a)]
\item $ a_i (s_i) =0 $.
	
\item $ a (\mathcal{S}_i) =0 $.
	
\item $ b (\mathcal{N}_i) =0 $.
	
\item $ a (v) = a_i (v) + \sum_{ k \neq i } \lk_{ S^3} ( \gamma_i, \gamma_k) $, if $v$ is a normal framing of $ \gamma_i \subset S^3 $.
	
\item $ a_i (\mathcal{S}_i) = - \sum_{k \neq i } \lk_{ S^3} ( \gamma_i, \gamma_k) $.
	
\item $ a (s_i) = \sum_{k \neq i } \lk_{ S^3} ( \gamma_i, \gamma_k) $.
 \end{enumerate}
\end{prop}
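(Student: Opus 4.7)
The plan is to treat (a), (b), (c) as three independent anchor identities encoding the bounding properties of the Seifert framings and of the global normal framing, then to prove (d) by additivity of the linking number under the decomposition $\gamma = \bigsqcup_k \gamma_k$, and finally to deduce (e) and (f) as immediate algebraic consequences of (d) combined with (a), (b).

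For (a), I would invoke the defining property of the Seifert framing $s_i$: up to homotopy it is represented by the outward conormal of an oriented Seifert surface $T_i \subset S^3$ with $\partial T_i = \gamma_i$. The pushed-off copy $\gamma_i + \delta s_i$ then bounds a small parallel translate of $T_i$ disjoint from $\gamma_i$, which forces $\lk_{S^3}(\gamma_i, \gamma_i + \delta s_i) = 0$. Part (b) follows by the same argument applied to a Seifert surface $T \subset S^3$ of the whole link $\gamma$, whose component meeting $\gamma_i$ realises $\mathcal{S}_i$; pushing $\gamma_i$ off $T$ produces a curve that bounds a translate disjoint from every component of $\gamma$. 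For (c) I would use that $\mathcal{N}$ extends to a nowhere-vanishing section of $\nu(f)$ over all of $S^3$; restricting it to any Seifert surface $T_i$ of $\gamma_i$ yields a surface inside $E_0(\nu(f))$ whose boundary is the image of $\mathcal{N}_i$, so $[\mathcal{N}_i] = 0$ in $H_1(E_0(\nu(f)), \Z)$, which by definition is $b(\mathcal{N}_i) = 0$.

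For (d), I would split $\gamma = \gamma_i \sqcup \bigsqcup_{k \neq i} \gamma_k$ and expand
\begin{equation*}
a(v) \;=\; \lk_{S^3}(\gamma, \gamma_i + \delta v) \;=\; \lk_{S^3}(\gamma_i, \gamma_i + \delta v) + \sum_{k \neq i} \lk_{S^3}(\gamma_k, \gamma_i + \delta v).
\end{equation*}
The first term on the right is $a_i(v)$ by definition. For each $k \neq i$, choosing $\delta$ small enough that $\gamma_i + \delta v$ remains in a tubular neighborhood of $\gamma_i$ disjoint from $\gamma_k$, the curve $\gamma_i + \delta v$ is isotopic to $\gamma_i$ through $S^3 \setminus \gamma_k$, so that summand equals $\lk_{S^3}(\gamma_i, \gamma_k)$. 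Substituting $v = \mathcal{S}_i$ in (d) and using (b) yields (e); substituting $v = s_i$ and using (a) yields (f).

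The only real point to watch is bookkeeping with orientations and the uniform choice of $\delta$: one must check that the ``outward push'' direction used for $s_i$ and $\mathcal{S}_i$ matches the convention fixed for $a_i$ and $a$, and that the same $\delta$ works simultaneously in a tubular neighborhood disjoint from every $\gamma_k$ with $k \neq i$. None of these points pose an obstacle beyond the standard care needed when comparing the Seifert framing of a single component to the Seifert framing of the whole link.
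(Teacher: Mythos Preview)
Your proposal is correct and matches the paper's proof essentially line for line: (a) and (b) via the bounding property of Seifert surfaces, (c) via extending $\mathcal{N}$ over a Seifert surface inside $E_0(\nu(f))$, (d) via additivity of linking plus the isotopy $\gamma_i + \delta v \simeq \gamma_i$ in the complement of $\gamma_k$, and (e), (f) as formal substitutions. The only cosmetic difference is that for (a), (b) the paper phrases the vanishing as ``the pushoff is disjoint from the original Seifert surface'' rather than ``the pushoff bounds a translated surface disjoint from $\gamma$'', which is the same argument read from the other side of the linking pairing.
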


\begin{proof}
	(a), (b): Slightly pushing out an oriented link $ C \subset S^3$ from its oriented Seifert surface $T \subset S^3$ in the direction of the outward pointing normal vector field of $C= \partial T \subset T$ produces a link $C' \subset S^3$ which is disjoint from $T$. Hence $ \lk_{ S^3} (C, C')=0$.
	
	(c) The image of the restriction of $ \caln $ to a Seifert surface of $ \gamma_i$ is a surface in $ E( \nu(f)) $ which is disjoint from $ \underline{0} $ and whose boundary is $ \caln_i $.
	
	(d) \begin{equation*} a(v)= \lk_{S^3} (\gamma, \gamma_i+ \delta v )= \sum_{k=1}^l \lk_{S^3} (\gamma_k, \gamma_i+ \delta v ) = a_i(v)+\sum_{k \neq i} \lk_{S^3} (\gamma_k, \gamma_i+ \delta v ),\end{equation*} 
  and 
 \begin{equation*} \lk_{S^3} ( \gamma_k, \gamma_i+ \delta v)= \lk_{S^3} ( \gamma_k, \gamma_i) \end{equation*}
 holds for $k \neq i$ with the choice of a sufficiently small $ \delta$.	
 
	(e) and (f) follow from (a), (b) and (d).
\end{proof}

For a normal framing $v$ of  $ \gamma_i \subset S^3 $ let $v^+ $ be the homotopically unique normal framing of $ \gamma_i \subset S^3 $ for which $ a(v^+)=a(v) + 1 $ holds. $v^+$ can be constructed by modifying $v$ over an arbitrarily small part of $\gamma_i$ by adding one total twist to $v$ (see Figure~\ref{fig:vplus}). Since the modification is local on $\gamma_i$, by Remark~\ref{re:loc} it can be considered in the canonical local coordinates that satisfy \eqref{eq:loktriv}. Therefore, interpreting $v$ as a framing of $f$ along $\gamma_{\sigma(i)}$, the total twist increases its linking number with the zero section by 1. Hence we have

\begin{figure}[h]

\centering

\resizebox{15cm}{6cm}{
\begin{picture}(0,0)%
\includegraphics{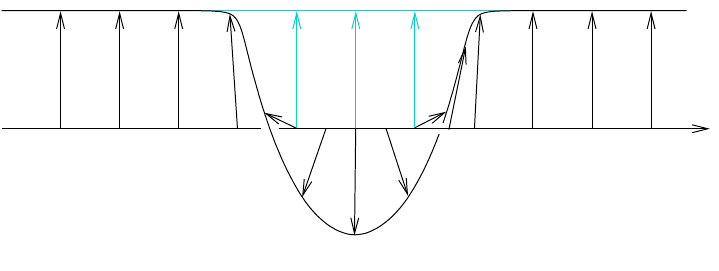}%
\end{picture}%
\setlength{\unitlength}{4144sp}%
\begin{picture}(5472,2061)(-2711,-244)
\put(-107,-189){\makebox(0,0)[lb]{\smash{{\SetFigFont{12}{14.4}{\rmdefault}{\mddefault}{\updefault}{\color[rgb]{0,0,0}$v^+$}%
}}}}
\put(2746,794){\makebox(0,0)[lb]{\smash{{\SetFigFont{12}{14.4}{\rmdefault}{\mddefault}{\updefault}{\color[rgb]{0,0,0}$\gamma_i$}%
}}}}
\put(2746,1694){\makebox(0,0)[lb]{\smash{{\SetFigFont{12}{14.4}{\rmdefault}{\mddefault}{\updefault}{\color[rgb]{0,.82,.82}$v$}%
}}}}
\end{picture}%
}
\caption{Adding an extra twist in $v^+$}
\label{fig:vplus}
\end{figure}

\begin{prop}
$b(v^+)=b(v)+1$.
\end{prop}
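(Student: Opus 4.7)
The plan is to make precise the informal argument sketched in the paragraph preceding the proposition by a local analysis in the canonical chart of Remark~\ref{re:loc}. By construction $v^+$ coincides with $v$ outside a small arc $\alpha \subset \gamma_i$ and on $\alpha$ differs from $v$ by one positive full twist in the fibers of $\nu(\gamma_i)$. Consequently the framings $\Xi^{-1}(v^+)$ and $\Xi^{-1}(v)$ of $f$ along $\gamma_{\sigma(i)}$ coincide outside the arc $\iota(\alpha) \subset \gamma_{\sigma(i)}$, so the difference $b(v^+) - b(v) = [\Xi^{-1}(v^+)] - [\Xi^{-1}(v)] \in H_1(E_0(\nu(f)),\Z)$ can be represented by a loop supported entirely above $\iota(\alpha)$, built from the restrictions of the two sections together with two arcs in a single fiber joining their endpoints.

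Next I would pick a point $q \in f(\alpha) \subset \Upsilon_j$ and invoke Remark~\ref{re:loc}: in the local coordinates of \eqref{eq:loktriv}, both $\nu(\gamma_i)$ and $\nu(f)|_{\gamma_{\sigma(i)}}$ are simultaneously trivialized as the $\C$-slice $\{(x,0,\tau)\}$ over the corresponding piece of $\Upsilon_j$, and the construction of $\Xi$ in the proof of Proposition~\ref{pr:isom} shows that, under these identifications, $\Xi^{-1}$ is the identity map $x \mapsto x$. Hence the positive full twist that distinguishes $v^+$ from $v$, which in the chart is the loop $t \mapsto e^{2\pi i t}$ in the punctured fiber $\C \setminus \{0\}$, is transported verbatim by $\Xi^{-1}$ to the same positive full twist of $\Xi^{-1}(v)$ over $\iota(\alpha)$.

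Finally, such a positive full twist of a nowhere vanishing section of a trivialized oriented rank--$2$ real vector bundle over an arc represents the positive generator of the fiber $H_1$ of $E_0(\nu(f))$, which under the natural isomorphism $H_1(E_0(\nu(f)),\Z) \cong \Z$ equals $+1$. Therefore $b(v^+) = b(v) + 1$. The only subtle point, which I expect to be the main obstacle worth spelling out in the actual write-up, is the sign check: one must verify that $\Xi$ is orientation-preserving on fibers so that the same local twist raises $a$ and $b$ by the same sign rather than opposite signs. This follows from the $\C$-linear identification of fibers in the construction of $\Xi$ in Proposition~\ref{pr:isom}, together with the compatibility of orientations recorded in Remark~\ref{re:loc}.
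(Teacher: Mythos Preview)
Your proposal is correct and follows essentially the same approach as the paper: the paper's ``proof'' is really the informal paragraph immediately preceding the proposition (localizing the twist, passing to the canonical chart of Remark~\ref{re:loc}, and reading off that the same twist raises the linking with the zero section by one), and you have simply fleshed out each of those steps. Your explicit sign discussion via the orientation-preserving nature of $\Xi$ on fibers is a useful addition that the paper leaves implicit.
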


\begin{cor}\label{co:diff1}
The difference $a(v)-b(v)$ does not depend on the framing $v$. Moreover,
\begin{equation*}
a(v)-b(v)=a(\mathcal{N}_i)
=
-b(\mathcal{S}_i)
\end{equation*}
holds for any $v$, which can be either a normal framing of $\gamma_i$ or a normal framing of $f$ along $\gamma_{\sigma(i)}$.
\end{cor}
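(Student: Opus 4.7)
The plan is to exploit the twist-shift operation $v\mapsto v^+$ together with the evaluations $a(\mathcal{S}_i)=0$ and $b(\mathcal{N}_i)=0$ from the preceding proposition.

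\emph{Independence from $v$.} For an oriented trivialisable rank-$2$ vector bundle over $S^1$, the space of nowhere zero sections up to homotopy is a $\Z$-torsor generated by the one-twist shift. Hence for any two framings $v,v'$ of $\gamma_i$ there exists $k\in\Z$ with $v'$ homotopic to $v^{+k}$ through nowhere zero sections, and the analogous statement holds with $v,v'$ viewed as framings of $f$ along $\gamma_{\sigma(i)}$ via the isomorphism $\Xi$ of Proposition~\ref{pr:isom}. Both integers depend only on the homotopy class of $v$ through nowhere zero sections: $a(v)=[\gamma_i+\delta v]\in H_1(S^3\setminus\gamma,\Z)$ and $b(v)=[v]\in H_1(E_0(\nu(f)),\Z)$ are preserved by ambient homotopies that avoid the zero section. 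By the definition of $v^+$ we have $a(v^+)=a(v)+1$, and by the proposition immediately above the corollary $b(v^+)=b(v)+1$. Consequently the difference $a(v)-b(v)$ is preserved by the generator of the $\Z$-action on framings, and is therefore independent of $v$.

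\emph{Evaluation.} Substituting $v=\mathcal{S}_i$, part (b) of the preceding proposition gives $a(\mathcal{S}_i)=0$, hence $a(v)-b(v)=-b(\mathcal{S}_i)$. For the other equality, take $v$ to be the framing of $\gamma_i$ (equivalently, of $f$ along $\gamma_{\sigma(i)}$) induced by the global normal framing $\mathcal{N}$ via $\Xi$; part (c) reads $b(\mathcal{N}_i)=0$, and thus $a(v)-b(v)=a(\mathcal{N}_i)$.

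\emph{Expected obstacle.} I do not foresee a substantial difficulty. The only delicate point is the bookkeeping between framings of $\gamma_i$ and framings of $f$ along $\gamma_{\sigma(i)}$, which is entirely governed by the isomorphism $\Xi$; once that identification is used consistently, the corollary is a formal consequence of the shift identity $b(v^+)=b(v)+1$ and of the two vanishing statements $a(\mathcal{S}_i)=0$ and $b(\mathcal{N}_i)=0$ established in the previous proposition.
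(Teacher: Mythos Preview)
Your argument is correct and is exactly the one the paper intends: the corollary is stated without proof because it is an immediate consequence of the preceding proposition $b(v^+)=b(v)+1$ together with parts (b) and (c) of Proposition~\ref{pr:of}, and you have spelled this out explicitly. The one bookkeeping point worth tightening is that when you substitute the framing of $\gamma_i$ coming from $\mathcal{N}$ via $\Xi$, that framing is $\Xi(\mathcal{N}_{\sigma(i)})$ (since $\Xi$ identifies $\nu(f)|_{\gamma_{\sigma(i)}}$ with $\nu(\gamma_i)$), so what you literally obtain is $a(v)-b(v)=a(\mathcal{N}_{\sigma(i)})$; the paper's notation here is loose in the same way, and the identification $a(\mathcal{N}_i)=a(\mathcal{N}_{\sigma(i)})$ is made precise only later in Corollary~\ref{co:diff2}.
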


\subsection{Pushing out of untwisted components}\label{ss:nontrivpush}
We define a family of generalizations of $L_1(f)$ and $L_2(f)$ for each component of $ \Upsilon$ and for all pairs of normal vector fields. According to the isomorphism $ \Xi $, the vector fields can be either framings of the components of $ \gamma $ in $ S^3$, or framings of $f$ along the components of $ \gamma $. The pushing out by a pair of vector fields is defined separately for untwisted and twisted components.

 Let $\Upsilon_j$ be an untwisted component $\Upsilon$, that is, $ i \neq \sigma (i) $ and $j= \{ i, \sigma (i) \}$.

(a) Let $v$ be a normal framing of $ \gamma_i \subset S^3 $, and let $ w$ be a normal framing of $ \gamma_{ \sigma (i)} \subset S^3 $. Then $df(v) + df(w)$ is not tangent to any of the two branches of $f$. Define 
\begin{equation*}
 c_j (v,w):= \lk_{ \R^5 } (f(S^3), \Upsilon_j + \delta (df(v) + df(w))) ,
 \end{equation*}
 where $ 0 < \delta $ is sufficiently small.

(b) Let $v$ be a normal framing of $f$ along $ \gamma_{ \sigma (i)} $, and let $ w$ be a nowhere zero normal vector field of $f$ along $ \gamma_{i} $. Define 
\begin{equation*}
c_j (v, w):=c_j( \Xi (v), \Xi (w)).
\end{equation*}

(c) Let $v$ be a normal framing of $ \gamma_i \subset S^3 $, and let $ w$ be a normal framing of $f$ along $ \gamma_{i} $. Define 
\begin{equation*}
c_j(v, w):=c_j (v, \Xi (w)).
\end{equation*}

\begin{remark}\label{re:lincomb}
 If $ v$ is a normal framing of $\gamma_i$ and $ w$ is a  normal framing of $ \gamma_{ \sigma (i)}$, and $ 0< \delta_1, \delta_{2} $ are sufficiently small real values (possibly depending on the points of $ \Upsilon$ continuously), then 
 \begin{equation*}
  \lk_{ \R^5 } (f(S^3), \Upsilon_j + \delta_1 df(v) + \delta_2  df(w))=c_j(v, w) ,
\end{equation*}
because $ \Upsilon_j + \delta_1 df(v) + \delta_2  df(w)) $ and $ \Upsilon_j + \delta (df(v) + df(w))$ are homotopic curves in $ \R^5 \setminus f(S^3)$.
\end{remark}
\begin{remark}\label{re:jajj} To avoid the complications described in Remark~\ref{re:fuas} we do not use the pushing out by normal framings of $f$. Instead we reduce those to the pushing out by a normal framing of $\gamma$ via $\Xi$.
	\end{remark}

To express the invariants $c_j$ in terms of $a$ and $b$ the key observation is the following.

\begin{lem}\label{le:kitolt}
	Let $ C \subset S^3 $ be an oriented link disjoint from $ \gamma $. Then 
 \begin{equation*}
 \lk_{ \R^5 } (f (S^3), f(C) + \delta \mathcal{N}|_C ) = \lk_{S^3} (C, \gamma) 
 \end{equation*}
 (where $ 0< \delta $ is sufficiently small).
\end{lem}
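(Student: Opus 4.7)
The plan is to compute the linking number by constructing an explicit bounding $2$-chain $V_\delta \subset \R^5$ for the $1$-cycle $f(C) + \delta \mathcal{N}|_C$ and counting its signed transverse intersections with $f(S^3)$. First I would pick a Seifert surface $T \subset S^3$ with $\partial T = C$ transverse to $\gamma$ (which is generic, and automatic on the boundary since $C \cap \gamma = \emptyset$); by definition of the linking number in $S^3$, the signed count of $T \cap \gamma$ is $\lk_{S^3}(C, \gamma)$. Since $\mathcal{N}$ is defined on all of $S^3$, it restricts to a nowhere zero section of $\nu(f)$ over $T$, and I set
\[V_\delta := \{f(x) + \delta \mathcal{N}(x) : x \in T\} \subset \R^5,\]
so that $\partial V_\delta = f(C) + \delta \mathcal{N}|_C$ and the desired linking number equals the algebraic intersection number $V_\delta \cdot f(S^3)$.

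Next I analyze these intersections for small $\delta$. Away from $\gamma$, $V_\delta$ is the push-off of a piece of $f(S^3)$ along a field normal to $f$, hence disjoint from $f(S^3)$. At a point $x \in T \cap \gamma$, with $q = f(x) = f(\iota(x))$, $V_\delta$ is pushed off the $x$-sheet by $\delta \mathcal{N}(x)$, but may meet the $\iota(x)$-sheet. Solving $f(p) + \delta \mathcal{N}(p) = f(p')$ for $(p, p') \in T \times S^3$ near $(x, \iota(x))$ via the implicit function theorem yields exactly one transverse intersection for $\delta$ small; surjectivity of the linearization follows from the double-point transversality $df_x(T_x S^3) + df_{\iota(x)}(T_{\iota(x)} S^3) = T_q \R^5$ combined with $T \pitchfork \gamma$ at $x$ (which guarantees $df_x(T_x T) \not\subset T_q \Upsilon$). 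By a generic perturbation of $T$ one may also assume $\iota(T \cap \gamma) \cap T = \emptyset$, so no double value contributes twice.

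It remains to compare signs at each such intersection. The orientation of $V_\delta$ is that of $T$ transported by $f + \delta \mathcal{N}$, so near $q$ the local tangent spaces are $df_x(T_x T)$ and $df_{\iota(x)}(T_{\iota(x)} S^3)$. Using the canonical decomposition \eqref{eq:felb},
\[T_q \R^5 = df_x(\nu_x \gamma) \oplus df_{\iota(x)}(\nu_{\iota(x)} \gamma) \oplus T_q \Upsilon,\]
one rewrites $df_x(T_x T) \oplus df_{\iota(x)}(T_{\iota(x)} S^3)$ in terms of this splitting and verifies that its induced orientation on $T_q \R^5$ matches the local sign of $T \pitchfork \gamma$ at $x$ inside $S^3$. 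Summing over $x \in T \cap \gamma$ then yields $V_\delta \cdot f(S^3) = \lk_{S^3}(C, \gamma)$, as required.

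The genuinely delicate step is the orientation bookkeeping in the last paragraph: one must fix compatible orientation conventions on $\nu(\gamma)$, $\nu(f)$, $T$, and $V_\delta$, and extract the agreement of signs from \eqref{eq:felb}. The transversality statements and the existence of exactly one intersection near each $f(x)$ are routine consequences of the implicit function theorem once the double-point transversality of $f$ is in hand.
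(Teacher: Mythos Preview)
Your proposal is correct and follows essentially the same approach as the paper: both take a Seifert surface $T$ for $C$ transverse to $\gamma$ (with the same genericity assumption $\iota(T\cap\gamma)\cap T=\emptyset$), push it off by $\mathcal{N}|_T$ to obtain a membrane bounding $f(C)+\delta\mathcal{N}|_C$, and identify the intersections with $f(S^3)$ as one point per $x\in T\cap\gamma$ lying on the $\iota(x)$-sheet, with matching sign. Your write-up is somewhat more explicit about the implicit function theorem step and the orientation bookkeeping, whereas the paper dispatches the sign comparison in a single sentence by noting that the orientation of $\Upsilon$ together with the two branches (in either order) gives the orientation of $\R^5$.
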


\begin{proof}
	Let $T \subset S^3$ be an oriented Seifert surface of $C$ whose intersection with $\gamma $ is transverse, and assume that if $p \in T \cap \gamma$, then $\iota(p) \notin T$. The shifted copy $T':=f(T)+\delta \mathcal{N}|_{T} $ is an oriented Seifert surface of $ f(C)+ \delta \mathcal{N}|_{C} $. For each $p \in T \cap \gamma$, the membrane $T'$ intersects $ f(S^3)$ at one point $q'$ near $q=f(p)=f( \iota(p))$ on the other branch of $f$ (the branch of $\iota (p)$), see Figure~\ref{fig:C}. The intersection point $ q' \in f(S^3) \cap T'$ has the same sign as $ p \in T \cap \gamma $, since the sum of the orientations of $ \Upsilon $ and the branches (in any order) agrees with the orientation of $ \R^5 $. Hence 
 \begin{equation*}
 \lk_{ \R^5 } (f (S^3), f(C) + \delta \mathcal{N}|_C )= \sharp_{\mbox{alg}} (f(S^3) \cap T')=\sharp_{\mbox{alg}}(T \cap \gamma)= \lk_{ S^3} (C, \gamma).
 \end{equation*}

\begin{figure}[h]

\centering
\resizebox{15cm}{6cm}{
\begin{picture}(0,0)%
\includegraphics{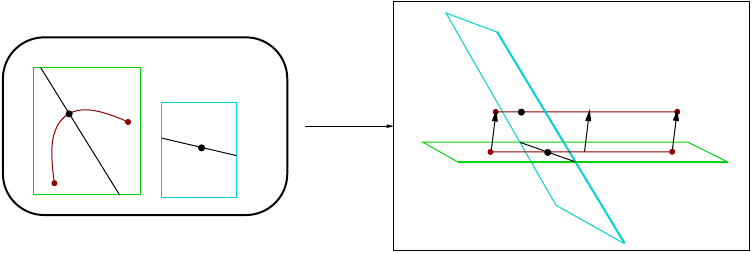}%
\end{picture}%
\setlength{\unitlength}{4144sp}%
\begin{picture}(5722,1920)(-1769,-481)
\put(-1211,1295){\makebox(0,0)[lb]{\smash{{\SetFigFont{12}{14.4}{\rmdefault}{\mddefault}{\updefault}{\color[rgb]{0,0,0}$S^3$}%
}}}}
\put(-350,378){\makebox(0,0)[lb]{\smash{{\SetFigFont{12}{14.4}{\rmdefault}{\mddefault}{\updefault}{\color[rgb]{0,0,0}$\iota(p)$}%
}}}}
\put( 57,195){\makebox(0,0)[lb]{\smash{{\SetFigFont{12}{14.4}{\rmdefault}{\mddefault}{\updefault}{\color[rgb]{0,0,0}$\gamma$}%
}}}}
\put(-1367,801){\makebox(0,0)[lb]{\smash{{\SetFigFont{12}{14.4}{\rmdefault}{\mddefault}{\updefault}{\color[rgb]{0,0,0}$\gamma$}%
}}}}
\put(-815,340){\makebox(0,0)[lb]{\smash{{\SetFigFont{10}{12.0}{\rmdefault}{\mddefault}{\updefault}{\color[rgb]{0,0,0}$C$}%
}}}}
\put(-1317,-16){\makebox(0,0)[lb]{\smash{{\SetFigFont{10}{12.0}{\rmdefault}{\mddefault}{\updefault}{\color[rgb]{0,0,0}$C$}%
}}}}
\put(-1438,526){\makebox(0,0)[lb]{\smash{{\SetFigFont{12}{14.4}{\rmdefault}{\mddefault}{\updefault}{\color[rgb]{0,0,0}$p$}%
}}}}
\put(-1025,606){\makebox(0,0)[lb]{\smash{{\SetFigFont{10}{12.0}{\rmdefault}{\mddefault}{\updefault}{\color[rgb]{0,0,0}$T$}%
}}}}
\put(2357,321){\makebox(0,0)[lb]{\smash{{\SetFigFont{8}{9.6}{\rmdefault}{\mddefault}{\updefault}{\color[rgb]{0,0,0}$q$}%
}}}}
\put(2730,371){\makebox(0,0)[lb]{\smash{{\SetFigFont{8}{9.6}{\rmdefault}{\mddefault}{\updefault}{\color[rgb]{0,0,0}$N$}%
}}}}
\put(3364, 69){\makebox(0,0)[b]{\smash{{\SetFigFont{8}{9.6}{\rmdefault}{\mddefault}{\updefault}{\color[rgb]{0,0,0}$f(C)$}%
}}}}
\put(1970, 95){\makebox(0,0)[b]{\smash{{\SetFigFont{8}{9.6}{\rmdefault}{\mddefault}{\updefault}{\color[rgb]{0,0,0}$f(C)$}%
}}}}
\put(2912,646){\makebox(0,0)[b]{\smash{{\SetFigFont{8}{9.6}{\rmdefault}{\mddefault}{\updefault}{\color[rgb]{0,0,0}$T'$}%
}}}}
\put(2856,126){\makebox(0,0)[b]{\smash{{\SetFigFont{8}{9.6}{\rmdefault}{\mddefault}{\updefault}{\color[rgb]{0,0,0}$f(T)$}%
}}}}
\put(2169,652){\makebox(0,0)[lb]{\smash{{\SetFigFont{8}{9.6}{\rmdefault}{\mddefault}{\updefault}{\color[rgb]{0,0,0}$q'$}%
}}}}
\put(3393,1104){\makebox(0,0)[lb]{\smash{{\SetFigFont{14}{16.8}{\rmdefault}{\mddefault}{\updefault}{\color[rgb]{0,0,0}$\mathbb R^5$}%
}}}}
\end{picture}%
} 
\caption{Proof of Lemma \ref{le:kitolt}}
\label{fig:C}
\end{figure}

\end{proof}

 \begin{remark}
Since the intersection of the branches of $f$ is not necessarily orthogonal (recall Remark~\ref{re:fuas}), $q'$ is not the shifted copy of $q$ by $\mathcal{N}$ -- see Figure~\ref{fig:C}.
In addition, the normal vectors $\mathcal{N}(p)$ are not concrete, since they are defined as elements of a quotient space, moreover, the framing $\mathcal{N}$ is defined only up to homotopy. Of course the argument of Lemma~\ref{le:kitolt} holds for every choice of representatives as long as $\delta$ is much smaller than the distance of $C$ and $\gamma$. 
\end{remark}

\begin{rem}
In the proof of Lemma~\ref{le:kitolt} we used only one property of $\mathcal{N}$, namely, that it extends from $C$ to a Seifert surface. If $C$ has only one component, then the converse is also true. Namely, if a nowhere zero section $v$ of $\nu(f)|_C$ extends to a nowhere zero section of $\nu(f)$ over a Seifert surface $T$ of $C$, then it extends to a global framing, that is, $v=\mathcal{N}|_C$. To show this, let $v$ and $w$ be two nowhere zero sections of the trivial bundle $\nu(f)|_T$. Their relative position defines a map $\phi: T \to SO(2)$. Its restriction $\phi|_C$ is null-homologous, hence it is nullhomotopic. Therefore if a framing of $f$ along $C$ extends to $T$, then it is homotopic to $\mathcal{N}|_C$ (note that it may still have non-homotopic extensions to $T$).
\end{rem}

\begin{cor}\label{co:fels}
(a)	 $ c_j (v, \mathcal{N}_{ i}) = a (v) $.

(b)	$ c_j (\mathcal{S}_i , \mathcal{N}_i)= 0 $.

(c) $	c_j(\mathcal{N}_{ \sigma (i)} , \mathcal{N}_i) = a (\mathcal{N}_{ \sigma (i)})= a (\mathcal{N}_i) $.
\end{cor}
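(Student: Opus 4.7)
The plan is to prove (a) as the main assertion and derive (b), (c) from it directly.

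For (a), the key idea is to unfold the relevant definitions and reduce to a situation where Lemma~\ref{le:kitolt} applies. By the definition case (c) of $c_j$ combined with definition case (a) and Remark~\ref{re:lincomb},
\[
c_j(v, \mathcal{N}_i) \;=\; c_j(v, \Xi(\mathcal{N}_i)) \;=\; \lk_{\R^5}\bigl(f(S^3),\, \Upsilon_j + \delta_1\, df(v) + \delta_2\, df(\Xi(\mathcal{N}_i))\bigr)
\]
for any sufficiently small $\delta_1, \delta_2 > 0$. On the other hand, let $\tilde\gamma_i = \gamma_i + \delta_1 v \subset S^3$ be the push-off of $\gamma_i$ along $v$; it is disjoint from $\gamma$ for small $\delta_1$, and $\lk_{S^3}(\tilde\gamma_i, \gamma) = a(v)$ by the definition of $a$. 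Applying Lemma~\ref{le:kitolt} to $C = \tilde\gamma_i$ yields
\[
\lk_{\R^5}\bigl(f(S^3),\, f(\tilde\gamma_i) + \delta_2 \mathcal{N}|_{\tilde\gamma_i}\bigr) \;=\; \lk_{S^3}(\tilde\gamma_i, \gamma) \;=\; a(v).
\]
To identify the two left-hand sides I would produce a homotopy in $\R^5 \setminus f(S^3)$ between the two shifted curves. This is the geometric heart of the argument: by the decomposition~\eqref{eq:felb}, every class in $\nu_p(f) = T_q\R^5/df_p(T_pS^3)$ admits a unique representative lying in $df_{\iota(p)}(\nu_{\iota(p)}(\gamma))$, and by the construction of $\Xi$ this canonical representative of $\mathcal{N}_i(p)$ is exactly $df_{\iota(p)}(\Xi(\mathcal{N}_i)(\iota(p)))$. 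With this choice, a first-order Taylor expansion in $\delta_1$ of $f(\tilde p) + \delta_2 \mathcal{N}(\tilde p)$ agrees with $q + \delta_1\, df_p(v(p)) + \delta_2\, df_{\iota(p)}(\Xi(\mathcal{N}_i)(\iota(p)))$ up to an error of order $O(\delta_1^2 + \delta_1\delta_2)$, which for $\delta_1,\delta_2$ small can be absorbed into a homotopy inside $\R^5 \setminus f(S^3)$.

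Part (b) is then immediate: by (a), $c_j(\mathcal{S}_i, \mathcal{N}_i) = a(\mathcal{S}_i) = 0$, using Proposition~\ref{pr:of}(b). For (c), the definition cases (b) and (c) of $c_j$ give
\[
c_j(\mathcal{N}_{\sigma(i)}, \mathcal{N}_i) \;=\; c_j(\Xi(\mathcal{N}_{\sigma(i)}), \Xi(\mathcal{N}_i)) \;=\; c_j(\Xi(\mathcal{N}_{\sigma(i)}), \mathcal{N}_i),
\]
and since $\Xi(\mathcal{N}_{\sigma(i)})$ is a framing of $\gamma_i$, part (a) yields $c_j(\mathcal{N}_{\sigma(i)}, \mathcal{N}_i) = a(\Xi(\mathcal{N}_{\sigma(i)})) = a(\mathcal{N}_{\sigma(i)})$. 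The final equality $a(\mathcal{N}_{\sigma(i)}) = a(\mathcal{N}_i)$ follows from Corollary~\ref{co:diff1} applied to $v = \Xi(\mathcal{N}_{\sigma(i)})$: since $b(\Xi(\mathcal{N}_{\sigma(i)})) = b(\mathcal{N}_{\sigma(i)}) = 0$ by Proposition~\ref{pr:of}(c), the corollary gives $a(\Xi(\mathcal{N}_{\sigma(i)})) = a(\mathcal{N}_i)$.

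The main obstacle is the homotopy in part (a): one must track that the genuine representative of $\mathcal{N}(p)$ in $T_q\R^5$ differs from $df_{\iota(p)}(\Xi(\mathcal{N}_i)(\iota(p)))$ only by a vector in $df_p(T_pS^3)$ (the tangent to the branch at $p$ at $q$), and then verify that the $O(\delta^2)$ correction terms arising from the simultaneous push-off of $\gamma_i$ along $v$ in $S^3$ can be absorbed without crossing $f(S^3)$. This is a standard first-order argument, but keeping track of both the quotient ambiguity in $\nu_p(f)$ and the uniformity of the $\delta$-parameters along the compact curve $\gamma_i$ requires care.
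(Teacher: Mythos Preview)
Your proposal is correct and follows essentially the same route as the paper. For (a), both you and the paper apply Lemma~\ref{le:kitolt} to the push-off $C=\gamma_i+\delta_1 v$ and then argue that the resulting curve $f(C)+\delta_2\mathcal{N}|_C$ is homotopic in $\R^5\setminus f(S^3)$ to the curve defining $c_j(v,\mathcal{N}_i)$. The paper carries out the homotopy a bit more explicitly than your Taylor-expansion sketch: it decomposes a representative of $\mathcal{N}_i$ according to~\eqref{eq:felb} as $\mathcal{N}_i^{(\gamma_i)}+\mathcal{N}_i^{(\gamma_{\sigma(i)})}+\mathcal{N}_i^{(\Upsilon_j)}$, with $\mathcal{N}_i^{(\gamma_{\sigma(i)})}=df(\Xi(\mathcal{N}_i))$ and $\mathcal{N}_i^{(\gamma_i)}=df(w)$ for some (possibly zero) normal field $w$ of $\gamma_i$, and then reduces the desired homotopy to the obvious homotopy between $\gamma_i+\delta_1 v+\delta_2 w$ and $\gamma_i+\delta_1 v$ in $S^3\setminus\gamma$. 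This is exactly the ``quotient ambiguity'' you flag in your final paragraph, handled in a cleaner coordinate-free way. For (c), the paper gets $a(\mathcal{N}_{\sigma(i)})=a(\mathcal{N}_i)$ directly from (a) together with the evident symmetry $c_j(\mathcal{N}_{\sigma(i)},\mathcal{N}_i)=c_j(\mathcal{N}_i,\mathcal{N}_{\sigma(i)})$ (swapping the roles of $i$ and $\sigma(i)$), rather than via Corollary~\ref{co:diff1} as you do; both arguments are valid.
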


\begin{proof} (b) and (c) follow from (a) and Proposition~\ref{pr:of}. To prove (a), let $v$ be a normal framing of $\gamma_i \subset S^3$. Choose sufficiently small positive numbers $ 0< \delta_2 \ll \delta_1 $. By Lemma \ref{le:kitolt}
\begin{equation*}
\lk_{\R^5}(f(S^3), f(\gamma_i + \delta_1 v)+ \delta_2 \mathcal{N}_i)=
\lk_{S^3}(\gamma_i + \delta_1 v, \gamma)
= a(v).
\end{equation*}
To show that the left hand side of the equation is equal to $ c_j (v, \mathcal{N}_{ i}) $ we have to manage the complications described in Remarks \ref{re:fuas} and \ref{re:jajj}.
According to the decomposition \eqref{eq:felb},
\begin{equation*}
\mathcal{N}_i = \mathcal{N}_i^{(\gamma_i)} + \mathcal{N}_i^{(\gamma_{\sigma(i)})} + \mathcal{N}_i^{(\Upsilon_j)} 
\end{equation*}
where $ \mathcal{N}_i^{(\gamma_{\sigma(i)})}= df( \Xi(\mathcal{N}_i)) $; $\mathcal{N}^{(\Upsilon_j)}_i $ is a tangent vector field of $ \Upsilon_j $; and $ \mathcal{N}_i^{(\gamma_i)} = df(w) $ holds with some $ w$ (possibly somewhere zero) normal vector field of $ \gamma_i \subset S^3 $. Then the curves
\[ f(\gamma_i + \delta_1 v)+ \delta_2 \mathcal{N}_i \approx f(\gamma_i + \delta_1 v+ \delta_2 w)+ df(\delta_2 \Xi(\mathcal{N}_i) )
\approx \Upsilon_j+  \delta_1 df( v)+\delta_2 df( \Xi(\mathcal{N}_i)) \] 
are homotopic in $ \R^5 \setminus f(S^3) $, since $ \gamma_i + \delta_1 v+ \delta_2 w $ and $ \gamma_i + \delta_1 v $ are homotopic in $ S^3 \setminus \gamma $. The linking number of 
\begin{equation*}
\Upsilon_j+  \delta_1 df( v)+\delta_2 df( \Xi(\mathcal{N}_i)) \mbox{ and } f(S^3)
\end{equation*}
in $\R^5$ is equal to $ c_j (v, \mathcal{N}_{ i}) $ by Remark~\ref{re:lincomb}.

\end{proof}

\begin{cor}
$ c_i (v^+, \mathcal{N}_{ i})	= c_i (v, \mathcal{N}_{ i}) +1 $.
\end{cor}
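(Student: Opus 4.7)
The plan is to deduce this immediately from Corollary~\ref{co:fels}(a) together with the defining property of $v^+$. Specifically, by Corollary~\ref{co:fels}(a) applied to the framing $v$ of $\gamma_i\subset S^3$, we have
\begin{equation*}
c_j(v, \mathcal{N}_i) = a(v),
\end{equation*}
and applied to $v^+$ (which is again a framing of $\gamma_i\subset S^3$) we have
\begin{equation*}
c_j(v^+, \mathcal{N}_i) = a(v^+).
\end{equation*}
By the definition of $v^+$ preceding the corollary, $a(v^+) = a(v) + 1$, so subtracting the two identities gives the claim.

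There is essentially no obstacle here: all the topological work (the local modification argument that produces $v^+$, the identification of linking numbers via $\Xi$, and the Seifert-surface computation underlying Lemma~\ref{le:kitolt}) has already been carried out. The corollary is just a book-keeping consequence, exhibiting the $\mathbb{Z}$-action on framings (changing a framing by one total twist) as the corresponding $\mathbb{Z}$-action on the generalized linking invariants $c_j(\,\cdot\,, \mathcal{N}_i)$. For completeness one could remark that the analogous statement $c_j(\mathcal{N}_{\sigma(i)}, w^+) = c_j(\mathcal{N}_{\sigma(i)}, w) + 1$ follows by the symmetric argument, using Corollary~\ref{co:fels}(a) in the other slot via the symmetry $c_j(v,w)=c_j(w,v)$ built into the definition through $\Xi$.
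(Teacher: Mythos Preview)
Your proposal is correct and matches the paper's intent: the paper states this corollary without proof, treating it as an immediate consequence of Corollary~\ref{co:fels}(a) (which gives $c_j(v,\mathcal{N}_i)=a(v)$) together with the defining relation $a(v^+)=a(v)+1$. Your extra remark on the symmetric slot is harmless but not needed here.
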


As another  consequence of Corollary~\ref{co:fels}, we can extend Corollary~\ref{co:diff1}.

\begin{cor}\label{co:diff2} The difference $a-b$ is an invariant of $f$ corresponding to the component $\Upsilon_j$ of $\Upsilon$. We denote it by $\Delta_j(f)$, and we have
\begin{equation*}
\Delta_j(f)=a(v)-b(v)=a(\mathcal{N}_i)
=
a(\mathcal{N}_{\sigma(i)})=
-b(\mathcal{S}_i)
=-b(\mathcal{S}_{\sigma(i)})=c_j(\mathcal{N}_{ \sigma (i)} , \mathcal{N}_i).
\end{equation*}
\end{cor}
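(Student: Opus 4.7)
My plan is to assemble the displayed chain of equalities from two results already in hand: Corollary~\ref{co:diff1} and Corollary~\ref{co:fels}(c). The first of these tells us that for any normal framing $v$ (understood as either a framing of $\gamma_i\subset S^3$ or, via $\Xi$, a framing of $f$ along $\gamma_{\sigma(i)}$), the integer $a(v)-b(v)$ is independent of $v$ and agrees with $a(\mathcal{N}_i)$ and with $-b(\mathcal{S}_i)$. Running the same corollary with the roles of $i$ and $\sigma(i)$ swapped immediately produces an analogous chain $a(v')-b(v') = a(\mathcal{N}_{\sigma(i)}) = -b(\mathcal{S}_{\sigma(i)})$ valid for any framing $v'$ associated to $\gamma_{\sigma(i)}$.

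To fuse these two parallel chains into a single invariant attached to the \emph{component} $\Upsilon_j$, I invoke Corollary~\ref{co:fels}(c), which gives
\[
c_j(\mathcal{N}_{\sigma(i)},\mathcal{N}_i) \;=\; a(\mathcal{N}_{\sigma(i)}) \;=\; a(\mathcal{N}_i).
\]
The middle equality is the decisive point: it identifies the two $a$-values computed from the opposite preimage components of $\Upsilon_j$, and therefore forces the two versions of $a-b$ to agree numerically. Consequently the common value depends on $f$ and on $\Upsilon_j$ alone, not on the preimage component chosen to describe it nor on the framing $v$, and this number may unambiguously be named $\Delta_j(f)$.

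Assembling the pieces yields the full displayed string
\[
a(v)-b(v) \;=\; a(\mathcal{N}_i) \;=\; a(\mathcal{N}_{\sigma(i)}) \;=\; -b(\mathcal{S}_i) \;=\; -b(\mathcal{S}_{\sigma(i)}) \;=\; c_j(\mathcal{N}_{\sigma(i)},\mathcal{N}_i),
\]
each term being equal to $\Delta_j(f)$. No fresh geometric input is needed beyond the prior lemmas; the only mildly subtle point worth underlining is the symmetry $a(\mathcal{N}_i)=a(\mathcal{N}_{\sigma(i)})$, which is not obvious from the definition of $a$, but is secured by Corollary~\ref{co:fels}(c) as a manifestation of the symmetry of the expression $df(v)+df(w)$ entering the definition of $c_j$.
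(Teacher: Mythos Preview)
Your proposal is correct and follows exactly the route the paper intends: the lead-in sentence ``As another consequence of Corollary~\ref{co:fels}, we can extend Corollary~\ref{co:diff1}'' and the explanatory paragraph following the statement make clear that the corollary is meant to be read off from Corollary~\ref{co:diff1} applied to each preimage component together with Corollary~\ref{co:fels}(c), with the latter supplying the bridge $a(\mathcal{N}_i)=a(\mathcal{N}_{\sigma(i)})$. You have simply made explicit what the paper leaves to the reader.
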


In other words, the twisting of the global normal framing of $f$ restricted to $\gamma_{\sigma(i)}$,  considered as a normal framing of $\gamma_i$, is the same as the twisting of the global normal framing of $f$ restricted to $\gamma_{i}$,  considered as a normal framing of $\gamma_{\sigma(i)}$. This invariant $\Delta_j(f)$ also agrees with $c_j(\mathcal{N}_{ \sigma (i)} , \mathcal{N}_i)$ by (c) of Corollary~\ref{co:fels}. To obtain the complete picture, one more observation is required.

\begin{prop}\label{pr:step}
	$ c_j (v^+ , w)=c_j (v , w^+)= c_j (v, w)+1 $.
	\end{prop}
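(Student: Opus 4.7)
The plan is to reduce the claim to a local computation in a chart around the short arc $A\subset\gamma_i$ where $v^+$ differs from $v$ by one full positive twist. Choose $A$ short enough that a single chart of the form described in Remark~\ref{re:loc} covers a tubular neighborhood $U$ of $f(A)\subset\Upsilon_j$ in $\R^5$. The chart identifies $U$ with an open subset of $\C\times\C\times\R$ so that $f(S^3)=\{xy=0\}$, $\Upsilon_j$ is the $\tau$-axis, and $\Upsilon_j+\delta(df(v)+df(w))$ is the curve $\tau\mapsto(\delta,\delta,\tau)$ for sufficiently small $\delta>0$. Because $df$ carries $\nu_p(\gamma_i)$ isomorphically onto the $x$-plane factor in the decomposition \eqref{eq:felb}, the twist defining $v^+$ becomes a $2\pi$-rotation in the $x$-factor, and hence
\[
\Upsilon_j+\delta(df(v^+)+df(w))=\{(\delta e^{2\pi i\phi(\tau)},\delta,\tau)\}
\]
throughout $U$ for a monotone $\phi$ that increases by $1$ across $A$ and is constant outside; outside $U$ the two pushed-out curves coincide.

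Next, the difference $c_j(v^+,w)-c_j(v,w)$ equals the homology class in $H_1(\R^5\setminus f(S^3))\cong\Z$ of the closed loop $C$ obtained by traversing the new curve across $A$ and returning along the reverse of the old curve. The loop $C$ lies in $U\setminus f(S^3)$, which is homotopy equivalent to $(\C^*\times\C^*)\times\R$; its first homology is freely generated by the meridians $\mu_x,\mu_y$ of the branches $\{x=0\}$ and $\{y=0\}$. Reading off the winding numbers directly from the parametrization gives $[C]=\mu_x$, since the $y$-coordinate of $C$ is the constant $\delta$ while the $x$-coordinate winds once around $0$. Under the inclusion $U\setminus f(S^3)\hookrightarrow\R^5\setminus f(S^3)$ both $\mu_x$ and $\mu_y$ map to the global generator of $H_1(\R^5\setminus f(S^3))$ (both are positive meridians of the single oriented immersed manifold $f(S^3)$), so $[C]\mapsto 1$ and thus $c_j(v^+,w)=c_j(v,w)+1$. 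The identity $c_j(v,w^+)=c_j(v,w)+1$ follows from the symmetric argument with the winding occurring in the $y$-factor, which again produces $+1$ globally.

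The main obstacle is the careful bookkeeping of orientations and signs. In particular, one must verify that the positive twist implicit in $a(v^+)=a(v)+1$ translates to a counterclockwise $2\pi$-rotation in the $x$-factor (so that $[C]=+\mu_x$, not $-\mu_x$), and that the two local meridians $\mu_x$ and $\mu_y$ represent the \emph{same} generator of $H_1(\R^5\setminus f(S^3))$ rather than differing by a sign. Both are routine once the conventions of Remark~\ref{re:loc} and the orientation of $\Upsilon_j$ induced by \eqref{eq:felb} are traced carefully; they reduce to the fact that the sum of the coorientations of the two local branches of $f$, together with the orientation of $\Upsilon_j$, matches the ambient orientation of $\R^5$, which is exactly the property used in the proof of Lemma~\ref{le:kitolt}.
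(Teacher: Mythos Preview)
Your proof is correct and follows essentially the same local strategy as the paper: both localize to the chart of Remark~\ref{re:loc} over the short arc where $v^+$ differs from $v$, and both reduce the linking number difference to a single $+1$ contribution controlled by the orientation convention from Lemma~\ref{le:kitolt}. The only variation is in how that $+1$ is extracted: the paper builds a small disc $D\subset S^3$ connecting $\gamma_i+\delta v$ to $\gamma_i+\delta v^+$ with one transverse intersection with $\gamma_i$, pushes $f(D)$ out along $df(w)$ to obtain a membrane $\widetilde D$ bounding the two pushed-out arcs, and counts the single transverse intersection of $\widetilde D$ with the other branch of $f(S^3)$; you instead close up the two pushed-out arcs into a loop $C$ and read off its class in $H_1((\C^*)^2\times\R)$ as $\mu_x$. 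These are dual computations of the same linking number, and the sign bookkeeping you flag (that $\mu_x$ and $\mu_y$ both map to the positive generator) is exactly the orientation compatibility the paper invokes.
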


\begin{proof}
Recall that $v^+$ can be obtained by a canonical local modification of  $v$ over a small segment of $\gamma_i$, see Figure~\ref{fig:vplus}. $v$ and $v^+$ can be connected by a disc $D$ (with corners) having a $+1$ transverse intersection point $p$ with $\gamma_i$. By Remark~\ref{re:loc}, locally the framings $v$ and $w$ can be trivialized together, therefore $w$ can be extended to $D$. The curves
\begin{equation*}
\Upsilon_j+\delta (df(v)+df(w)) \mbox{ and } \Upsilon_j+\delta (df(v^+)+df(w)) 
\end{equation*}
can be connected by the shifted copy
\begin{equation*}
\widetilde{D}=f(D)+\delta df( w)
\end{equation*}
of $f(D)$ in the direction of $w$. The shifted disc $\widetilde{D}$ intersects $f(S^3)$ at one point $q'$ near $q=f(p)$. The intersection is transverse and by the compatible choice of orientations it witnesses an increase in the linking number by $+1$. 
\end{proof}

\begin{cor}\label{co:ckif}
\begin{equation*}
c_j (v,w)= a(v)+a(w)-a(\mathcal{N}_i)=a(v)+a(w)-\Delta_j(f),
\end{equation*}
in particular,
\begin{equation*}
c_j (\mathcal{S}_i , \mathcal{S}_{ \sigma (i)}) = - a(\mathcal{N}_i)=-\Delta_j(f) 
\end{equation*}
and 
\begin{equation*}
c_j (\mathcal{N}_i , \mathcal{N}_{ \sigma (i)}) = a(\mathcal{N}_i)=\Delta_j(f). 
\end{equation*}
\end{cor}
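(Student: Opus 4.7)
The plan is to show that the quantity $c_j(v,w) - a(v) - a(w)$ is constant (in $v,w$), then pin down the constant at a convenient choice. The first step combines Proposition~\ref{pr:step} with the defining property $a(v^+) = a(v)+1$ of the twist operation (and the analogous $a(w^+) = a(w)+1$): each local twist of either $v$ or $w$ bumps both $c_j(v,w)$ and $a(v)+a(w)$ by $+1$, so the difference is invariant under the twist operations. Since every framing of $\gamma_i$ is homotopic to a given one after applying some integer number of twists $v \mapsto v^{\pm}$ (the homotopy classes of framings of the trivial line bundle over $S^1 \cong \gamma_i$ form $\mathbb{Z}$, detected by $a$), and since $c_j$ and $a$ depend only on the homotopy class, the difference $c_j(v,w) - a(v) - a(w)$ is a constant depending only on $f$ and the component $\Upsilon_j$.

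To identify the constant, evaluate at $w = \Xi(\mathcal{N}_i)$, regarded as a framing of $\gamma_{\sigma(i)}$. By Corollary~\ref{co:fels}(a) (together with case (c) of the definition of $c_j$) one has $c_j(v, \mathcal{N}_i) = c_j(v, \Xi(\mathcal{N}_i)) = a(v)$. Substituting this into the invariant difference yields
\begin{equation*}
c_j(v,w) - a(v) - a(w) \;=\; a(v) - a(v) - a(\Xi(\mathcal{N}_i)) \;=\; -a(\mathcal{N}_i),
\end{equation*}
and by Corollary~\ref{co:diff2} this equals $-\Delta_j(f)$. This establishes the general formula $c_j(v,w) = a(v) + a(w) - \Delta_j(f)$.

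The two ``in particular'' identities then follow by plugging in the values of $a$ on the distinguished framings given by Proposition~\ref{pr:of}(b) and Corollary~\ref{co:diff2}: for $v = \mathcal{S}_i$, $w = \mathcal{S}_{\sigma(i)}$ one has $a(\mathcal{S}_i) = a(\mathcal{S}_{\sigma(i)}) = 0$, giving $c_j(\mathcal{S}_i, \mathcal{S}_{\sigma(i)}) = -\Delta_j(f)$; and for $v = \mathcal{N}_i$, $w = \mathcal{N}_{\sigma(i)}$ one has $a(\mathcal{N}_i) = a(\mathcal{N}_{\sigma(i)}) = \Delta_j(f)$, giving $c_j(\mathcal{N}_i, \mathcal{N}_{\sigma(i)}) = \Delta_j(f)$. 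There is no real obstacle here: all the work has been done in the preceding propositions, and the only point worth checking is the homotopical completeness, i.e. that the relations $v^+$ and $w^+$ together with changes of sign generate all framings up to homotopy, which is immediate from $\pi_1(SO(2)) = \mathbb{Z}$.
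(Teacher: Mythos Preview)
Your proof is correct and follows essentially the same approach as the paper: the paper also starts from $c_j(v,\mathcal{N}_i)=a(v)$ (Corollary~\ref{co:fels}) and then uses Proposition~\ref{pr:step} to step from $\mathcal{N}_i$ to an arbitrary $w$, picking up $a(w)-a(\mathcal{N}_i)$. Your phrasing (``the difference $c_j-a-a$ is constant, then evaluate'') is just a repackaging of the same two ingredients; the added remark about $\pi_1(SO(2))=\mathbb{Z}$ makes explicit what the paper leaves implicit.
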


\begin{proof}
By Corollary~\ref{co:fels} we know that $c_j(v, \mathcal{N}_i)=a(v)$. Applying Proposition~\ref{pr:step}, $\mathcal{N}_i$ can be changed to $w$ step by step, increasing $c_j$ by $a(w)-a(\mathcal{N}_i)$. 
\end{proof}

\subsection{Pushing out of twisted components}\label{ss:twistedpush} Let $\Upsilon_j$ be a twisted component of $\Upsilon$, that is, $ i = \sigma (i) $ and $j= \{ i \}$.

In this case one vector field $v$, either a normal framing of $f$ along $ \gamma_i$ or a normal framing of $ \gamma_i \subset S^3 $, determines two vectors $ v(p_1) $ and $v(p_2)$ at $ q=f(p_1)=f(p_2) \in \Upsilon_j$. According to this we define the pushing out and the corresponding linking number in these two settings as follows.

\begin{enumerate}[(a)]
\item Let $v$ be a normal framing of $ \gamma_i \subset S^3$. Since $df_{p_1}(v(p_1))+df_{p_2}(v(p_2))$ is not tangent to any of the branches, we can define
\csere{\begin{equation}\label{eq:cjTwisted}
c_j (v):= \lk_{ \R^5 } (f(S^3), \widetilde{\Upsilon}_j^{(v)}),
\end{equation}}
where $\widetilde{\Upsilon}_j^{(v)}$ is the copy of $ \Upsilon_j$ shifted slightly along the vector field 
\begin{equation}\label{eq:cv}
q \mapsto df_{p_1}(v(p_1))+df_{p_2}(v(p_2)).
\end{equation}

\item Let $v$ be a normal framing of $f$ along $ \gamma_i $. Define \begin{equation*}
c_j(v):=c_j(\Xi(v)) 
\end{equation*}
\end{enumerate}

For a pair of framings we define the \emph{double pushing out} and the associated linking number, again depending on the kind of vector fields involved.

\begin{enumerate}[(a)]
\item Let $v$ and $w$ be normal framings of $ \gamma_i \subset S^3 $. Let $ \overline{\Upsilon}^{(v, w)} $ be the curve parametrized with $ \gamma_i $ by the map 
\begin{equation}\label{eq:dvw}
 p \mapsto f(p)+ \delta \cdot (df_p(v(p))+df_p(w(\iota(p))))
 \end{equation}
(where $ \delta $ is a sufficiently small positive real number). Then define 
\begin{equation*}
 d_j(v, w):= \lk_{\R^5} (f(S^3), \overline{\Upsilon}_j^{(v, w)}) .
 \end{equation*}

\item Let $v$ and $w$ be normal framings of $f$ along $ \gamma_i $. Define 
\begin{equation*}
d_j(v,w):=d_j (\Xi(v), \Xi(w)).
\end{equation*}

\item Let $v$ be a normal framing of $ \gamma_i \subset S^3 $ and let $w$ be a normal framing of $f$ along $ \gamma_i $. Define 
\begin{equation*}
 d_j(v, w) :=d_j(v, \Xi(w)).
\end{equation*}
\end{enumerate}

\begin{rem}\label{re:lincomb2} Similarly to Remark~\ref{re:lincomb}, the sum of the two vectors can be replaced by another positive linear combination to obtain the same linking numbers. Namely, in the definition of $c_j(v)$ the vector field \eqref{eq:cv} can be replaced by
\begin{equation*}
q \mapsto \delta_1 df_{p_1}(v(p_1))+ \delta_2 df_{p_2}(v(p_2)),
\end{equation*}
and in the definition of $d_j(v,w)$ the curve \eqref{eq:dvw} can be replaced by
\begin{equation*}
 p \mapsto f(p)+ \delta_1 df_p(v(p))+ \delta_2 df_p(w(\iota(p)))
\end{equation*}
with sufficiently small positive coefficients $\delta_1, \delta_2 > 0$ (possibly depending continuously on the point $q$ or $p$, respectively).

\end{rem}

\begin{prop}\label{prop:double}
$d_j(v, v)= 2 c_j(v) $.
\end{prop}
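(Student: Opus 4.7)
The proof is essentially a matter of unpacking the definitions and observing a symmetry. The key is to compare the parametrization of $\overline{\Upsilon}_j^{(v,v)}$ (which uses $\gamma_i$ as the parameter circle) with that of $\widetilde{\Upsilon}_j^{(v)}$ (which uses $\Upsilon_j$). I expect these to trace out the same image in $\R^5$, with the former doing so twice.

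First, I would verify the $\iota$-invariance of the parametrizing map defining $\overline{\Upsilon}_j^{(v,v)}$. With the natural reading of the formula, the map is
\begin{equation*}
\alpha \colon \gamma_i \to \R^5, \qquad \alpha(p) = f(p) + \delta \bigl( df_p(v(p)) + df_{\iota(p)}(v(\iota(p))) \bigr).
\end{equation*}
Substituting $\iota(p)$ for $p$ and using $\iota^2 = \mathrm{id}$ together with $f \circ \iota = f$ shows $\alpha(\iota(p)) = \alpha(p)$, so $\alpha$ factors through the quotient $\gamma_i / \iota$. Since $j = \{i\}$ is a twisted component, $f|_{\gamma_i} \colon \gamma_i \to \Upsilon_j$ is precisely the orientation-preserving nontrivial double covering, and the identification $\gamma_i / \iota \cong \Upsilon_j$ makes the induced map $\bar{\alpha} \colon \Upsilon_j \to \R^5$ send $q = f(p_1) = f(p_2)$ to
\begin{equation*}
q + \delta \bigl( df_{p_1}(v(p_1)) + df_{p_2}(v(p_2)) \bigr).
\end{equation*}
This is exactly the parametrization used to define $\widetilde{\Upsilon}_j^{(v)}$ in the definition of $c_j(v)$.

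Second, I would assemble the commutative diagram
\begin{equation*}
\gamma_i \xrightarrow{\,f|_{\gamma_i}\,} \Upsilon_j \xrightarrow{\,\bar{\alpha}\,} \R^5 \setminus f(S^3),
\end{equation*}
so that $\alpha = \bar{\alpha} \circ f|_{\gamma_i}$. Both $\gamma_i$ and $\Upsilon_j$ are oriented circles and $f|_{\gamma_i}$ has degree $2$, hence in $H_1(\R^5 \setminus f(S^3), \Z) \cong \Z$ we have $[\overline{\Upsilon}_j^{(v,v)}] = \alpha_*[\gamma_i] = 2 \cdot \bar{\alpha}_*[\Upsilon_j] = 2 \cdot [\widetilde{\Upsilon}_j^{(v)}]$. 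Identifying the linking numbers with these homology classes yields $d_j(v,v) = 2 c_j(v)$.

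The only minor obstacle I foresee is the orientation bookkeeping: one must confirm that the double cover $\gamma_i \to \Upsilon_j$ is orientation preserving, which follows from the standing convention in Subsection~\ref{ss:imdoub} that all link components carry their natural orientations, and the observation that $\alpha$ is a genuine immersion (so degrees pass cleanly to homology). No analytic input beyond this is required, and the disjointness of $\alpha(\gamma_i)$ from $f(S^3)$ for small $\delta$ is guaranteed by the same transversality argument that makes $\widetilde{\Upsilon}_j^{(v)}$ disjoint from $f(S^3)$ in the definition of $c_j(v)$.
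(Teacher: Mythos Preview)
Your proposal is correct and is precisely the unpacking of what the paper's one-line proof (``It follows directly from the constructions'') means: the $\iota$-invariance of the parametrizing map makes $\overline{\Upsilon}_j^{(v,v)}$ a degree-$2$ cover of $\widetilde{\Upsilon}_j^{(v)}$, so the linking number doubles. Your attention to the orientation of the cover $f|_{\gamma_i}$ and the implicit correction of the differential subscript in \eqref{eq:dvw} (reading $df_{\iota(p)}$ rather than $df_p$ on the second term) are both appropriate.
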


\begin{proof}
It follows directly from the constructions.
\end{proof}

Lemma \ref{le:kitolt} implies the following corollary, in analogy of Corollary~\ref{co:fels}.

\begin{cor}\label{co:fels2}
\begin{enumerate}[(a)]
\item $d_j (v, \mathcal{N}_i)=a(v) $.
\item $ d_j (\mathcal{S}_i, \mathcal{N}_i)=0 $.	
\item $ c_j(\mathcal{N}_i)= \frac{1}{2} a(\mathcal{N}_i) $.
\end{enumerate}
\end{cor}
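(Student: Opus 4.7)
The strategy is to mirror the proof of Corollary~\ref{co:fels} for the untwisted case, using Lemma~\ref{le:kitolt} as the central tool, while keeping careful track of the fact that on a twisted component the two preimages of each double value both lie on the same circle $\gamma_i$. Parts (b) and (c) will fall out of (a) by specialization, using Proposition~\ref{pr:of}(b) and Proposition~\ref{prop:double}, so the real work is (a).

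For (a), I would take $C = \gamma_i + \delta_1 v$, a slight pushoff of $\gamma_i$ along $v$ inside $S^3$. Then $C$ is disjoint from $\gamma$ and $\lk_{S^3}(C,\gamma) = a(v)$, so Lemma~\ref{le:kitolt} yields
\[
\lk_{\R^5}\bigl(f(S^3),\, f(C) + \delta_2 \mathcal{N}|_C\bigr) = a(v),
\]
and it remains to identify the curve $f(C) + \delta_2 \mathcal{N}|_C$, up to homotopy in $\R^5 \setminus f(S^3)$, with $\overline{\Upsilon}_j^{(v,\,\Xi(\mathcal{N}_i))}$, whose linking number with $f(S^3)$ is $d_j(v,\Xi(\mathcal{N}_i)) = d_j(v,\mathcal{N}_i)$ by definition. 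To do this I would decompose $\mathcal{N}_i(p)$ at $q = f(p) = f(\iota(p))$ via \eqref{eq:felb} as
\[
\mathcal{N}_i(p) = \mathcal{N}_i(p)^{(p)} + \mathcal{N}_i(p)^{(\iota(p))} + \mathcal{N}_i(p)^{(\Upsilon_j)},
\]
noting that $\mathcal{N}_i(p)^{(\iota(p))} = df_{\iota(p)}(\Xi(\mathcal{N}_i)(\iota(p)))$ by the very construction of $\Xi$ in Proposition~\ref{pr:isom}. The $\Upsilon_j$-tangent component is absorbed into the parameterization, while the $(p)$-component, written as $df_p(w(p))$ for some normal field $w$ of $\gamma_i$, is absorbed by shifting the pushoff direction from $v$ to $v + \tfrac{\delta_2}{\delta_1}\, w$, which is a homotopy inside $S^3 \setminus \gamma$ and therefore does not alter the linking class in $\R^5 \setminus f(S^3)$. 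What remains is a positive linear combination of $df(v)(p)$ and $\mathcal{N}_i(p)^{(\iota(p))}$, which, by (the twisted analogue of) Remark~\ref{re:lincomb2}, is precisely the shift defining $\overline{\Upsilon}_j^{(v,\Xi(\mathcal{N}_i))}$. Hence $d_j(v,\mathcal{N}_i) = a(v)$.

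Part (b) is then just the specialization $v = \mathcal{S}_i$ together with $a(\mathcal{S}_i)=0$ from Proposition~\ref{pr:of}(b). For part (c), taking $v = \Xi(\mathcal{N}_i)$ in (a) gives $d_j(\Xi(\mathcal{N}_i),\mathcal{N}_i) = d_j(\Xi(\mathcal{N}_i),\Xi(\mathcal{N}_i)) = a(\mathcal{N}_i)$, and Proposition~\ref{prop:double} supplies $d_j(\Xi(\mathcal{N}_i),\Xi(\mathcal{N}_i)) = 2\, c_j(\Xi(\mathcal{N}_i)) = 2\, c_j(\mathcal{N}_i)$, from which $c_j(\mathcal{N}_i) = \tfrac{1}{2}\, a(\mathcal{N}_i)$ follows.

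The main technical point is the absorption step in (a): because $\iota$ acts nontrivially on a single circle $\gamma_i$, the same field $v$ is being evaluated at both $p$ and $\iota(p)$ when interpreting the pushed-out curve as a shift of $\Upsilon_j$, and one has to verify that modifying $v$ by $w$ and absorbing the $\Upsilon_j$-tangent part remain compatible with the double-cover parameterization used in the definition of $\overline{\Upsilon}_j^{(v,w)}$. This is the twisted counterpart of the delicate decomposition step in the proof of Corollary~\ref{co:fels}(a), and it is the only place where the twisted structure genuinely complicates the argument.
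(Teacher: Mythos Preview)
Your proof is correct and follows essentially the same route as the paper's: apply Lemma~\ref{le:kitolt} to $C=\gamma_i+\delta_1 v$, decompose $\mathcal{N}_i$ along \eqref{eq:felb}, absorb the $\Upsilon_j$-tangent and same-branch components via a homotopy in $S^3\setminus\gamma$, and invoke Remark~\ref{re:lincomb2} to identify the remaining linking number with $d_j(v,\mathcal{N}_i)$; parts (b) and (c) then follow by specialization exactly as you describe. Your care about the double-cover parameterization in the twisted case is well placed, and matches the level of detail the paper provides.
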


\begin{proof}
Parts $(b)$ and $(c)$ follow immediately from $(a)$ using Proposition~\ref{pr:of} $(b)$ and Proposition~\ref{prop:double}, respectively.

Let $v$ be a nonzero normal field of $\gamma_i \subset S^3$. Choose sufficiently small positive numbers $ 0< \delta_2 \ll \delta_1 $. By Lemma \ref{le:kitolt}
\begin{equation*}
\lk_{\R^5}(f(S^3), f(\gamma_i + \delta_1 v)+ \delta_2 \mathcal{N}_i)= a(v). 
\end{equation*}
According to the decomposition \eqref{eq:felb} 
\begin{equation*}
  \mathcal{N}_i = \mathcal{N}_i^{(\gamma_i)} + \mathcal{N}_i^{(\gamma_{\sigma(i)})} + \mathcal{N}_i^{(\Upsilon_j)} ,
  \end{equation*}
  where 
  $  \mathcal{N}_i^{(\gamma_{\sigma(i)})}= df( \Xi(\mathcal{N}_i)) 
  $; $\mathcal{N}^{(\Upsilon_j)}_i $ is tangent to $ \Upsilon_j $; and $ \mathcal{N}_i^{(\gamma_i)} = df(w) $ with some $ w$ (possibly zero) normal vector field of $ \gamma_i \subset S^3 $. Then the curves
	\[ f(\gamma_i + \delta_1 v)+ \delta_2 \mathcal{N}_i \approx f(\gamma_i + \delta_1 v+ \delta_2 w)+ df(\delta_2 \Xi(\mathcal{N}_i) ) \]
 \[
	\approx [ p \mapsto f(p)+\delta_1 df(v(p))+ \delta_2df (\Xi(\mathcal{N}_i(p)))] \] 
	are homotopic in $ \R^5 \setminus f(S^3) $, since $ \gamma_i + \delta_1 v+ \delta_2 w $ and $ \gamma_i + \delta_1 v $ are homotopic in $ S^3 \setminus \gamma $. The linking number of the curve 
 \begin{equation*}
  [ p \mapsto f(p)+\delta_1 df(v(p))+ \delta_2df (\Xi(\mathcal{N}_i(p)))] 
  \end{equation*}
(where $p \in \gamma_i$) and $f(S^3)$  in $\R^5$ is equal to $ d_j (v, \mathcal{N}_{ i}) $ by Remark~\ref{re:lincomb2}.
\end{proof}

\begin{cor}
	$d_j (v^+, \mathcal{N}_i)=d_j (v, \mathcal{N}_i)+1 $.
\end{cor}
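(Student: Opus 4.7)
The plan is to reduce the statement directly to Corollary~\ref{co:fels2}(a) together with the defining property of $v^+$. Corollary~\ref{co:fels2}(a) asserts that $d_j(v,\mathcal{N}_i) = a(v)$ for every normal framing $v$ of $\gamma_i \subset S^3$ (with $i = \sigma(i)$, i.e.\ $\Upsilon_j$ twisted). Applying this identity once to $v$ and once to $v^+$ gives
\[
d_j(v^+,\mathcal{N}_i) - d_j(v,\mathcal{N}_i) \;=\; a(v^+) - a(v) \;=\; 1,
\]
where the final equality is the defining property of $v^+$ recorded just before Corollary~\ref{co:diff1}.

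The two things to check are that Corollary~\ref{co:fels2}(a) applies with $v$ replaced by $v^+$, and that $v^+$ is well defined in the present twisted setting. Both are essentially automatic: the construction of $v^+$ from $v$ was given earlier as a local modification of $v$ on a small arc of $\gamma_i$ (adding one total twist, as in Figure~\ref{fig:vplus}), and this procedure is intrinsic to the embedding $\gamma_i \subset S^3$, making no reference to the covering behaviour of $f|_{\gamma_i}$; moreover $v^+$ is itself a normal framing of $\gamma_i \subset S^3$, so it falls under the scope of Corollary~\ref{co:fels2}(a) without modification. There is no real obstacle here — this corollary is the twisted analogue of the corresponding statement $c_j(v^+,\mathcal{N}_i) = c_j(v,\mathcal{N}_i) + 1$ proved above in the untwisted case, and it is obtained by exactly the same one-line argument.
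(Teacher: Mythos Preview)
Your proof is correct and matches the paper's approach: the corollary is stated without proof immediately after Corollary~\ref{co:fels2}, so it is intended as a direct consequence of part~(a) together with the defining equation $a(v^+)=a(v)+1$, exactly as you argue.
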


By Corollary~\ref{co:diff1}, $a(v)-b(v)=a(\mathcal{N}_i)=-b(\mathcal{S}_i)$, and in the twisted case it is trivially equal to $a(\mathcal{N}_{\sigma(i)})$ and $-b(\mathcal{S}_{\sigma(i)})$, hence it is an  invariant of $f$ corresponding to  $\Upsilon_j$. We use the notation $\Delta_j(f)=a(v)-b(v)=a(\mathcal{N}_i)=-b(\mathcal{S}_i)$  introduced in Corollary~\ref{co:diff2}.

\begin{prop}\label{pr:step2}
	$ d_j (v^+ , w)=d_j (v , w^+)= d_j (v, w)+1 $.
\end{prop}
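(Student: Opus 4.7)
My plan is to mirror the argument of Proposition~\ref{pr:step}, adapting it to the twisted setting where the single component $\gamma_i$ double covers $\Upsilon_j$ via the fixed-point-free involution $\iota$. First I would set up the locality of the modification. The passage from $v$ to $v^+$ is supported on a small arc $\alpha \subset \gamma_i$; after shrinking $\alpha$, I may assume that $\alpha$ and $\iota(\alpha)$ are disjoint in $\gamma_i$, so the vector $w(\iota(p))$ is unaffected for every $p \in \alpha$. Consequently $\overline{\Upsilon}_j^{(v,w)}$ and $\overline{\Upsilon}_j^{(v^+,w)}$ coincide as parametrized curves outside a small neighborhood of $f(\alpha) \subset \Upsilon_j$.

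Next, I would construct a cobordism disc $D \subset S^3$ (with corners) lying in a small tubular neighborhood of $\alpha$, whose boundary consists of $\alpha + \delta v|_\alpha$ and $\alpha + \delta v^+|_\alpha$ joined by two short segments in the normal planes at the endpoints, and which meets $\gamma_i$ transversally at a single $+1$ point $p_0 \in \alpha$. By Remark~\ref{re:loc} the framings $v$ and $w$ may be simultaneously trivialized in a local chart around $q_0 = f(p_0)$, and the vector field $q \mapsto df_{\iota(p)}(w(\iota(p)))$ extends to a smooth nowhere zero vector field $W$ on a neighborhood of $f(\alpha)$ in $\R^5$, still transverse to both branches of $f$. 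I would then form the shifted surface $\widetilde{D} := f(D) + \delta W$. Its boundary, glued with the common pieces of the two parametrized curves, assembles into a $2$-chain realizing a homotopy between $\overline{\Upsilon}_j^{(v,w)}$ and $\overline{\Upsilon}_j^{(v^+,w)}$ in $\R^5$ modulo the intersections with $f(S^3)$. By construction $\widetilde{D}$ meets $f(S^3)$ transversally in a single point $q'$ lying on the branch through $\iota(p_0)$, and the orientation computation used in the proof of Lemma~\ref{le:kitolt} gives the sign $+1$. This yields $d_j(v^+, w) = d_j(v,w) + 1$.

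For the equality $d_j(v, w^+) = d_j(v, w) + 1$ I would apply the same construction with the roles of $v$ and $w$ swapped: modifying $w$ over a short arc $\beta \subset \gamma_i$ affects the parametrization $p \mapsto f(p) + \delta df_p(v(p)) + \delta df_{\iota(p)}(w(\iota(p)))$ only at points $p \in \iota(\beta)$, which form a short arc disjoint from $\beta$, and the analogous shifted disc contributes one transverse $+1$ intersection. The main point that I expect to require care is the sign bookkeeping in the twisted setting: since $\alpha$ and $\iota(\alpha)$ now sit in the same component, I would need to verify that $\widetilde{D}$ intersects $f(S^3)$ on the branch through $\iota(p_0)$ (and not spuriously on the branch through $p_0$), and that the orientation conventions of the decomposition \eqref{eq:felb} together with Proposition~\ref{pr:isom} produce the same positive sign as in the untwisted case of Proposition~\ref{pr:step}.
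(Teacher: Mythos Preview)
Your proposal is correct and follows exactly the same approach as the paper; the paper's proof is in fact just the single sentence ``Due to the local nature of the modification $v \mapsto v^+$, the proof of Proposition~\ref{pr:step} applies without change.'' Your write-up spells out in detail precisely the point the paper leaves implicit, namely that by shrinking the support arc $\alpha$ one has $\alpha \cap \iota(\alpha) = \emptyset$, so the twisted structure is invisible locally and the untwisted argument carries over verbatim.
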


\begin{proof}
Due to the local nature of the modification $v \mapsto v^+$, the proof of Proposition~\ref{pr:step} applies without change.
\end{proof}

\begin{cor}\label{co:dkif}
\begin{equation*}
d_j (v,w)= a(v)+a(w)-\Delta_j(f),
\end{equation*}
therefore
\begin{equation*}
c_j(v)=a(v)-\frac{1}{2} a(\mathcal{N}_i)=a(v)-\frac{1}{2} \Delta_j(f),
\end{equation*}
in particular,
\begin{equation*}
c_j (\mathcal{S}_i) =-\frac{1}{2} a(\mathcal{N}_i)= -\frac{1}{2} \Delta_j(f) ,
\end{equation*}
and 
\begin{equation*}
c_j (\mathcal{N}_i ) =\frac{1}{2} a(\mathcal{N}_i)= \frac{1}{2} \Delta_j(f). 
\end{equation*}
\end{cor}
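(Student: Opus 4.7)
The proof will follow the same pattern as Corollary~\ref{co:ckif} in the untwisted case, using $d_j(\cdot,\cdot)$ as the building block and then specializing to the diagonal to recover $c_j(\cdot)$.

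First I would establish the formula $d_j(v,w) = a(v) + a(w) - \Delta_j(f)$ for arbitrary normal framings $v, w$ of $\gamma_i$. The plan is to anchor the identity at the pair $(v, \mathcal{N}_i)$: by Corollary~\ref{co:fels2}(a) we have $d_j(v, \mathcal{N}_i) = a(v)$, and since $a(\mathcal{N}_i) = \Delta_j(f)$ (which is simply the definition of $\Delta_j(f)$ in the twisted case, as recalled just before Proposition~\ref{pr:step2}), this rewrites as
\begin{equation*}
d_j(v, \mathcal{N}_i) = a(v) + a(\mathcal{N}_i) - \Delta_j(f),
\end{equation*}
confirming the desired formula at $w = \mathcal{N}_i$. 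Now I would use Proposition~\ref{pr:step2} to propagate the identity to every other choice of $w$. Since any two framings of $\gamma_i$ differ by finitely many applications of the $(\cdot)^+$ operation (framings are classified by the integer $a(\cdot)$, and $a(w^+) = a(w) + 1$ by the analogue of the untwisted discussion), and since both sides of the proposed equation increase by exactly $1$ when $w$ is replaced by $w^+$, induction yields the formula for all $w$.

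Once $d_j(v,w) = a(v) + a(w) - \Delta_j(f)$ is in hand, the identity for $c_j$ follows by specialization. Setting $w = v$ and applying Proposition~\ref{prop:double},
\begin{equation*}
2 c_j(v) = d_j(v,v) = 2a(v) - \Delta_j(f),
\end{equation*}
which gives $c_j(v) = a(v) - \tfrac{1}{2}\Delta_j(f)$. The two particular cases then reduce to plugging in $v = \mathcal{S}_i$ (using $a(\mathcal{S}_i) = 0$ from Proposition~\ref{pr:of}(b)) and $v = \mathcal{N}_i$ (using $a(\mathcal{N}_i) = \Delta_j(f)$).

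I do not foresee a significant obstacle: the real conceptual work was already done in Proposition~\ref{pr:step2} (which transports the $(\cdot)^+$ argument from the untwisted pushing out to the twisted double pushing out) and in Corollary~\ref{co:fels2}(a) (which was proved by the Lemma~\ref{le:kitolt} argument adapted to the twisted setting). The only point that merits a brief sanity check is that the factor $\tfrac{1}{2}$ in the formulas for $c_j$ comes out integrally in the two special cases of interest: $\Delta_j(f) = a(\mathcal{N}_i)$ must be even whenever the twisted component $\Upsilon_j$ appears, which is ensured because $c_j(\mathcal{S}_i)$ and $c_j(\mathcal{N}_i)$ are integer-valued linking numbers by construction, so Proposition~\ref{prop:double} forces $\Delta_j(f) \in 2\Z$ in the twisted case.
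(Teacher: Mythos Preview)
Your proposal is correct and follows essentially the same approach as the paper: anchor at $d_j(v,\mathcal{N}_i)=a(v)$ via Corollary~\ref{co:fels2}(a), then step from $\mathcal{N}_i$ to an arbitrary $w$ using Proposition~\ref{pr:step2}, and finally specialize to the diagonal with Proposition~\ref{prop:double} to recover $c_j$. The paper's written proof is terser and does not spell out the diagonal specialization or the parity remark, but the logical content is the same.
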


\begin{proof}
By Corollary~\ref{co:fels2} we know that $d_j(v, \mathcal{N}_i)=a(v)$. Applying Proposition~\ref{pr:step2}, $\mathcal{N}_i$ can be changed to $w$ step by step, increasing $d_j$ by $a(w)-a(\mathcal{N}_i)$. 
\end{proof}

\subsection{\csere{Comparing the different versions of $L$}}

\csere{Recall the definition of $L_1(f)$ and $L_2(f)$ from Subsection~\ref{ss:eszinv}. Both are  the linking number of the image of $f$ with a shifted copy of the double point curve $\gamma$, but in the case of $L_1$ the Seifert framing $\mathcal{S}$ of $\gamma$ is used for the pushing out, while the global normal framing $\mathcal{N}$ of $f$ is used in the case of $L_2$. In terms of our newly defined invariants, $L_1(f)$ is the sum of $c_j(\mathcal{S}_i, \mathcal{S}_{\sigma(i)})=-a(\mathcal{N}_i)=-\Delta_j(f)$ for untwisted and $c_j(\mathcal{S}_i)=-\frac{1}{2}a(\mathcal{N}_i)=-\frac{1}{2}\Delta_j(f)$ for twisted components, while $L_2(f)$ is the sum of $c_j(\mathcal{N}_i, \mathcal{N}_{\sigma(i)})=a(\mathcal{N}_i)=\Delta_j(f)$ for untwisted and $c_j(\mathcal{N}_i)=\frac{1}{2}a(\mathcal{N}_i)=\frac{1}{2}\Delta_j(f)$ for twisted components, see Corollary~\ref{co:ckif}  and  Corollary~\ref{co:dkif}. Hence, we obtain the following.}

\begin{cor}\label{co:L12}
	\[ L_1 (f) = \sum_{  j \mbox{ untwisted} } c_j (\mathcal{S}_i, \mathcal{S}_{ \sigma (i)}) +
	\sum_{  j \mbox{ twisted} } c_j (\mathcal{S}_i) = -\frac{1}{2}  \sum_{i=1}^l a(\mathcal{N}_i) ,\]

	\[ L_2 (f) = \sum_{  j \mbox{ untwisted} } c_j (\mathcal{N}_i, \mathcal{N}_{ \sigma (i)}) +
	\sum_{  j \mbox{ twisted} } c_j (\mathcal{N}_i) = \frac{1}{2}  \sum_{i=1}^l a(\mathcal{N}_i) .\]
\end{cor}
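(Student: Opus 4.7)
The plan is to unpack the original definitions of $L_1(f)$ and $L_2(f)$ into sums over the components $\Upsilon_j$ of $\Upsilon=f(\gamma)$, and then evaluate each summand using the componentwise formulas already derived in Corollaries~\ref{co:ckif} and \ref{co:dkif}.

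For the first equality in each line, I would verify that the Seifert framing $\mathcal{S}$ of $\gamma$ (respectively the global normal framing $\mathcal{N}$ of $f$ restricted to $\gamma$) is an admissible vector field in the definition of $L_1$ (respectively $L_2$). For $L_1$ the requirement is $[\widetilde\gamma]=0\in H_1(S^3\setminus\gamma,\Z)\cong\Z^l$, i.e.\ each coordinate $\lk_{S^3}(\gamma_k,\gamma_i+\delta\mathcal{S}_i)$ vanishes; this is true because a Seifert surface of $\gamma$ restricted to $\gamma_i$ provides a surface disjoint from the push-off (cf.\ Proposition~\ref{pr:of}(a),(b)). For $L_2$, the analogous admissibility follows from $b(\mathcal{N}_i)=0$ (Proposition~\ref{pr:of}(c)). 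Once $\mathcal{S}$ (resp.\ $\mathcal{N}$) is used, the pushed-off curve $\widetilde{f(\gamma)}$ splits along the components $\Upsilon_j$: for an untwisted $j=\{i,\sigma(i)\}$ the two preimages of a generic double value lie on distinct $\gamma_i,\gamma_{\sigma(i)}$, so the summand equals $c_j(\mathcal{S}_i,\mathcal{S}_{\sigma(i)})$ (respectively $c_j(\mathcal{N}_i,\mathcal{N}_{\sigma(i)})$); for a twisted $j=\{i\}$ both preimages lie on the same $\gamma_i$ and the summand equals $c_j(\mathcal{S}_i)$ (respectively $c_j(\mathcal{N}_i)$), matching the definitions in Subsection~\ref{ss:nontrivpush}.

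For the second equality I would substitute the formulas from the preceding corollaries. By Corollary~\ref{co:ckif} and Corollary~\ref{co:diff2}, for each untwisted $j$
\[
c_j(\mathcal{S}_i,\mathcal{S}_{\sigma(i)})=-\Delta_j(f)=-a(\mathcal{N}_i)=-\tfrac12\bigl(a(\mathcal{N}_i)+a(\mathcal{N}_{\sigma(i)})\bigr),
\]
and by Corollary~\ref{co:dkif} for each twisted $j=\{i\}$
\[
c_j(\mathcal{S}_i)=-\tfrac12 a(\mathcal{N}_i).
\]
Summing over $j\in J$ and reindexing (each $i$ belonging to an untwisted pair $\{i,\sigma(i)\}$ contributes once, each $i$ forming a twisted singleton $\{i\}$ contributes once) yields $L_1(f)=-\tfrac12\sum_{i=1}^l a(\mathcal{N}_i)$. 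The parallel computation, using the plus-sign counterparts in Corollaries~\ref{co:ckif} and \ref{co:dkif}, gives $L_2(f)=+\tfrac12\sum_{i=1}^l a(\mathcal{N}_i)$, yielding the new direct proof of $L_1(f)=-L_2(f)$ advertised in the preliminary summary.

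The statement is mostly a bookkeeping consequence of the machinery developed in this section; the only genuine content is the componentwise splitting of the definitions of $L_1$ and $L_2$, which turns on the fact that $\mathcal{S}$ and $\mathcal{N}$ are admissible choices. The symmetrization of each untwisted term into a half-sum over its two preimage labels is precisely what produces the uniform factor of $\tfrac12$ in the final closed form, and it matches neatly with the half already appearing in the twisted formula of Corollary~\ref{co:dkif}.
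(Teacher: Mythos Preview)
Your proposal is correct and follows the same (implicit) approach as the paper, which states this corollary without a separate proof since it is the immediate bookkeeping consequence of Corollaries~\ref{co:ckif} and~\ref{co:dkif} together with the componentwise decomposition of the linking numbers defining $L_1$ and $L_2$. One small slip: in the $L_1$ admissibility check you write that ``each coordinate $\lk_{S^3}(\gamma_k,\gamma_i+\delta\mathcal{S}_i)$ vanishes''; for $k\neq i$ this individual linking number is $\lk(\gamma_k,\gamma_i)$, which is generally nonzero. What actually vanishes is the $k$-th coordinate of $[\widetilde\gamma]$, namely $\lk_{S^3}(\gamma_k,\gamma+\delta\mathcal{S})=\sum_i\lk_{S^3}(\gamma_k,\gamma_i+\delta\mathcal{S}_i)$, which is zero because the full push-off $\widetilde\gamma$ bounds (the Seifert surface of $\gamma$) in $S^3\setminus\gamma$; your conclusion that $\mathcal{S}$ is admissible remains correct.
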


\begin{cor}\label{co:L1L2equal}
	$ L_1 (f)= - L_2 (f) $.
\end{cor}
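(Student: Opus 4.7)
The plan is immediate from Corollary~\ref{co:L12}. The two displayed formulas there,
\[ L_1(f) = -\frac{1}{2}\sum_{i=1}^l a(\mathcal{N}_i), \qquad L_2(f) = \frac{1}{2}\sum_{i=1}^l a(\mathcal{N}_i), \]
are negatives of one another term by term, so the identity $L_1(f)=-L_2(f)$ is obtained just by adding the two expressions and reading off the result. There is no additional calculation required at this stage; all the analytic content has been absorbed into the sign conventions of the preceding corollaries.

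The substance of the argument lives one level up, in the passage from the componentwise pushings-out to the expressions collected in Corollary~\ref{co:L12}. Concretely, the formulas $c_j(v,w) = a(v)+a(w)-\Delta_j(f)$ for untwisted components (Corollary~\ref{co:ckif}) and $c_j(v) = a(v) - \tfrac{1}{2}\Delta_j(f)$ for twisted components (Corollary~\ref{co:dkif}) make the symmetry manifest: evaluating at Seifert framings gives $-\Delta_j(f)$ (resp.\ $-\tfrac{1}{2}\Delta_j(f)$) because $a(\mathcal{S}_i)=0$, while evaluating at global normal framings gives $+\Delta_j(f)$ (resp.\ $+\tfrac{1}{2}\Delta_j(f)$) because $a(\mathcal{N}_i)=\Delta_j(f)$. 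The sign flip that equates $L_1$ with $-L_2$ is thus not an accident but reflects the fact that $\mathcal{S}$ and $\mathcal{N}$ sit symmetrically about the normalization point $c_j(\mathcal{S}_i,\mathcal{N}_i)=0$ (respectively $d_j(\mathcal{S}_i,\mathcal{N}_i)=0$) established in Corollaries~\ref{co:fels}(b) and~\ref{co:fels2}(b).

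Accordingly, there is no real obstacle to overcome here. The only potential pitfall is purely bookkeeping: one should check that, for untwisted components, each $j\in J$ contributes a single term $\pm\Delta_j(f)$ while the sum $\sum_i a(\mathcal{N}_i)$ on the right-hand side counts both $i$ and $\sigma(i)$, and Corollary~\ref{co:diff2} guarantees $a(\mathcal{N}_i)=a(\mathcal{N}_{\sigma(i)})$, so the factor $\tfrac{1}{2}$ is exactly right; the twisted components, with their own factor of $\tfrac{1}{2}$ from Proposition~\ref{prop:double}, fit into the same formula. Once this parity is confirmed, the final corollary reduces to inspection.
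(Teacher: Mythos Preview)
Your proposal is correct and matches the paper's approach exactly: the corollary is stated without proof in the paper because it is an immediate consequence of the two displayed formulas in Corollary~\ref{co:L12}, and your observation that these are negatives of each other term by term is precisely the intended reading. The additional commentary you provide on why the componentwise symmetry arises (via Corollaries~\ref{co:ckif}, \ref{co:dkif}, \ref{co:fels}, \ref{co:fels2}, and the bookkeeping with the factor $\tfrac{1}{2}$) is accurate and helpful, though strictly more than the paper itself spells out at this point.
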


\begin{rem}\label{re:deltas}
$L_1$ and $L_2$ can be expressed by the invariants $\Delta_j(f)$ as
\begin{equation*}
L_1(f)=-\frac{1}{2} \sum_{i=1}^l \Delta_{\{i, \sigma(i)\}} (f) \mbox{ and } L_2(f)=\frac{1}{2} \sum_{i=1}^l \Delta_{\{i, \sigma(i)\}} (f).
\end{equation*}
This can be interpreted as 
\begin{equation}\label{eq:transfer}
L_1(f)=-\frac{1}{2} a(\mathcal{N}|_{\gamma})=\frac{1}{2}b(\mathcal{S}) \mbox{ and } L_2(f)=\frac{1}{2} a(\mathcal{N}|_{\gamma})=- \frac{1}{2} b(\mathcal{S}).
\end{equation}

To describe $a(\mathcal{N}|_{\gamma})$: the global normal framing  $\mathcal{N}|_{\gamma}$ of $f$ along $\gamma$ is transferred to a framing of $\gamma$ by $\Xi$, and $a(\mathcal{N}|_{\gamma})$ is its linking number with $\gamma$. Similarly, the Seifert framing $\mathcal{S}$ of $\gamma$ is transferred to a framing of $f$ along $\gamma$, and $b(\mathcal{S})$ is its linking number with the zero section. To prove equation~\eqref{eq:transfer} without componentwise decomposition, one can try to apply Lemma~\ref{le:kitolt} directly, in analogy of Corollaries~\ref{co:fels} and \ref{co:fels2}. However the technical difficulties (such as the mixed pushing out of twisted components by $\mathcal{S}$ and $\mathcal{N}|_{\gamma}$) naturally lead to our arguments in this section.
\end{rem}

\begin{rem}\label{re:Lv} To complete the picture, we express the version $L_v(f)$ of $L$ in terms of $a$, $b$ and $c$. The original definition is equation~\eqref{eq:Lv}.

Let $v$ be either a normal framing of $\gamma$ or the a normal framing of $f$ along $\gamma$. Let $v_i=v|_{\gamma_i}$ denote its restrictions to the components of $\gamma$. Then 
\begin{align*}
\widetilde{L}_v(f)= \sum_{j=\{i, \sigma(i)\} \in J} c_j(v_i, v_{\sigma(i)}) 
&=\sum_{i=1}^l (a(v_i)- \frac{1}{2}a(\mathcal{N}_i))=\sum_{i=1}^l a(v_i)+ L_1(f) \\
&= 
\sum_{i=1}^l (b(v_i)+ \frac{1}{2}a(\mathcal{N}_i))=
\sum_{i=1}^l b(v_i)+ L_2(f).
\end{align*}
cf. Corollaries \ref{co:diff2}, \ref{co:ckif} and \ref{co:dkif}.
Here $\sum_{i=1}^l b(v_i)$ is equal to the homology class of $[v] \in H_1(E_0(\nu(f)), \Z) \cong \Z$, considering $v$ a normal framing of $f$ along $\gamma$. This observation provides a new proof for \cite[Lemma 4.15.]{ekholm4}. Namely, 
\begin{equation*}
    L_v(f):=\widetilde{L}_v(f)-[v]=L_2(f),
\end{equation*} 
independently of the choice of $v$.
\end{rem}

\section{Nearby embeddings}\label{s:nearby}

\subsection{Preliminary summary of the section} For \csere{the image $f(S^3)$ of} a stable immersion $f: S^3 \looparrowright \R^5$ we define a family of nearby 3-manifolds in two versions. First\csere{, in Subsection~\ref{ss:abstnear}} we define them as abstract 3-manifolds by a surgery of $S^3$ along the set of double points $\gamma$, in analogy to formula~\eqref{eq:rag}. Then \csere{in Subsection~\ref{ss:embnear}} we realize them as embedded manifolds in $\R^5$ close to the image of $f$. In the case of immersions associated to finitely determined holomorphic germs the nearby 3-manifolds serve as topological candidates for the boundary of the Milnor fibre.

Every nearby 3-manifold is obtained by a resolution of the image of $f$ along each double value component $\Upsilon_j$ corresponding to a pair of normal framings $v_i$ and $v_{\sigma(i)}$, the restrictions of a framing $v$ of $\gamma$. We show that the resulting 3-manifold depends only on $a(v_i)+a(v_{\sigma(i)})$, or, equivalently, on $c_j(v_i, v_{\sigma(i)})$\csere{, see Proposition~\ref{pr:gluingdepend}}. \csere{By the main observation of the section, Proposition~\ref{pr:hurkkey}, the following two linking numbers (in $\R^5$) are equal:
\begin{itemize}
    \item The linking number of the nearby 3-manifold of $f(S^3)$ corresponding to the pair of framings $(v_i, v_{\sigma(i)})$ and $\Upsilon_j$,
    \item $c_j(v_i, v_{\sigma(i)})$,  which is the linking number of $f(S^3)$  and the push-out copy of $\Upsilon_j$ corresponding to the pair of framings $(v_i, v_{\sigma(i)})$.
\end{itemize}
(Here in the summary we formulated these statements for untwisted components, but each has a counterpart for twisted components, detailed in the section proper.)} Based on this property, we are able to describe the pairs of framings which provide the boundary of the Milnor fibre as the corresponding nearby 3-manifold\csere{, see Section~\ref{s:topref}}.

\subsection{Abstract nearby 3-manifolds}\label{ss:abstnear} 
Let $v$ be a normal framing of $ \gamma \subset S^3 $. According to the notation \csere{used in Subsection~\ref{ss:framlink} and} Remark~\ref{re:Lv}, let $v_i=v|_{\gamma_i}$ denote the restrictions of $v$ to the components of $\gamma$. \csere{Recall that a normal framing of $f$ along $\gamma$ (like the restriction of the global normal framing $\mathcal{N}$) also induce a normal framing of $\gamma \subset S^3$ via the isomorphism described in Proposition~\ref{pr:isom}.}

We associate a $3$-manifold constructed by the following surgery. 
First we cut out some open tubular neighbourhoods of all components $ \gamma_i \subset S^3 $, then we glue the torus boundaries as in the construction of the boundary of the Milnor fibre, cf. \eqref{eq:rag}, but we use different gluing maps, determined by the framing $v$. Hence we define the abstract nearby manifold $ M[v] $ associated to the framing $ v$ as

\begin{equation}\label{eq:ragtop}  M[v] \simeq \left[ \left(S^3 \setminus
\bigcup_{i=1}^l {\rm int}(N_i)  \right)\cup \left(\bigcup_{i = \sigma (i) } Y_i \right) \right] 
\Bigg / \phi \mbox{,} \end{equation}
where  $N_i$ is a sufficiently small closed tubular neighbourhood of $ \gamma_i \subset S^3$, each $ Y_i $ is diffeomorphic to $Y$ \csere{(defined by equation~\eqref{eq:3Y})}, and the gluing $ \phi $ is induced by a
collection $( \phi_i)_{i=1, \dots, l}$ of diffeomorphisms
\[ \phi_{i}: \left\{ \begin{array}{ccc} \partial N_i \to -\partial N_{\sigma(i)}
& \mbox{ if } & i \neq \sigma(i) \\
\partial N_i \to \partial Y_i \hspace{1cm} & \mbox{ if } & i = \sigma(i). \\
\end{array} \right.    \]

The gluing functions $ \phi_i $ are determined up to isotopy by their induced homomorphism on the first homology. We choose a basis $(m_i, l_i) $ for each $H_1(\partial N_i, \Z) \cong \Z \oplus \Z$. Let  $m_i$ be the oriented meridian of $ \partial N_i $. The framing $v_i$ defines an oriented longitude $l_i=\gamma + \delta v_i$ with a sufficiently small $\delta > 0$. Recall that an oriented meridian $m$ and an oriented longitude $c$ on $\partial Y$ are fixed, see Preliminaries (Subsection~\ref{ss:boundmilnor}) or \cite[3.2.]{NP2}. Then the gluing maps are defined as follows:

\[ \begin{array}{cccc} \mbox{Untwisted case } (i \neq \sigma(i)) \mbox{:} & 
m_i \mapsto -m_{ \sigma(i)} & \mbox{ and } &
 l_i \mapsto l_{ \sigma(i)}, \\  
 \mbox{Twisted case } (i = \sigma(i)) \mbox{:} &
 m_i \mapsto -m & \mbox{ and } &
 l_i \mapsto c. \end{array} \]

\begin{prop}\label{pr:gluingdepend}
\phantom{Text to not break linking.}
\begin{enumerate}[(a)]
\item For untwisted components, \csere{two} framings $v$ and $w$ of $\gamma$ induce the same gluing map $ \phi_i :\partial N_i \to \partial N_{\sigma(i)}$ if and only if 
\begin{equation*} 
a(v_i)+a(v_{\sigma(i)}) = a(w_i)+a(w_{\sigma(i)}) ,
\end{equation*}
or, equivalently, $ c_j (v_i, v_{\sigma(i)})= c_j (w_i, w_{\sigma(i)})$.

\item \csere{For twisted components, framings $v$ and $w$ with $a(v_i) \neq a(w_i)$ induce different gluing maps $\phi_i$.}
\end{enumerate}
\end{prop}

\begin{proof} 
\begin{enumerate}[(a)]
\item Consider the image of the homology class corresponding to $v_i^+ $ by $\phi_i$: 
\[ l_i^+=l_i+m_i \mapsto l_{\sigma(i)}-m_{ \sigma(i)} , \]
and the right hand side is $l_{\sigma(i)}^{-}$ in the sense that its linking number with $\gamma$ is $a(v_{\sigma(i)})-1$. 
Hence another choice $w_i$, $w_{ \sigma(i) }$ induces the same gluing map $ \phi_i $ if and only if $ a(v_i)+a(v_{\sigma(i)}) = a(w_i)+a(w_{\sigma(i)}) $.

\item \csere{If $l_i$ is mapped to $c \subset \partial Y$ by $\phi_i$, then $l_i^+=l_i+m_i$ is mapped to $c-m$; iterating this to get from $v_i$ to $w_i$ we can never get back to $c$.}
\end{enumerate}
\end{proof}

\subsection{Embedded nearby 3-manifolds}\label{ss:embnear} We construct an embedding $ M[v] \subset \R^5 $ by realizing the surgery in a small neighbourhood of each $ \Upsilon_j \subset \R^5 $. First we define the gluing pieces, then we describe the identification.

By Remark~\ref{re:loc}, every point $f(p)=f(\iota(p))=q \in \Upsilon_j$ admits a $\mathcal{C}^{\infty}$ local chart $ (x, y, \tau) \in \C \times \C \times I $ on a neighborhood $U_q \subset \R^5$ such that $ f(S^3)= \{xy=0\}$,  $\Upsilon=\{(0,0,\tau)\}$ and
\begin{equation}\label{eq:loktriv2}
\begin{aligned}
\left\{f(\gamma_i +s v_i) \ | \ 0 < s < \delta \right\} &=\left\{(t, 0, \tau) \ | \ t \in \R^+\right\}, \\
\left\{ f(\gamma_{\sigma(i)}+s v_{\sigma(i)}) \ | 0 < s < \delta \right\} &=\left\{(0, t, \tau)  \ | \ t \in \R^+\right\}.
\end{aligned}
\end{equation}

Moreover the chart can be chosen such that the shifted copy of $\Upsilon_j $ is $(1, 1, \tau )$. Recall that $\Upsilon_j^{(v)}$  is the copy of $ \Upsilon_j $ shifted slightly along the vector field $ df(v_i)+df(v_{ \sigma(i)}) $ in the untwisted case, and along the vector field $ q \mapsto df_{p_1}(v_i(p_1))+df_{p_2}(v_i(p_2)) $ (where $q=f(p_1)=f(p_2)$) for twisted components.


For the purpose of handling the gluing map we need to extend this local description to an entire neighbourhood of $\Upsilon_j$. In the untwisted case the normal bundles of the preimages $\gamma_i$ and $\gamma_{\sigma(i)}$ are trivial (as oriented vector bundles over an $S^1$), and we can choose the trivialization in such a way that the vector fields $v_i$ and $v_{\sigma(i)}$ become constant sections. Hence in the untwisted case, we have a diffeomorphism between a neighbourhood $U_j$ of $\Upsilon_j$ and $\C \times \C \times S^1$ in which the description \eqref{eq:loktriv2} holds. For a twisted component, we can use the same argument to see that the normal bundle of $\gamma_i$ is trivial, and we can take a trivialization with $v_i$ being the constant $1 \in \C$ section. Pushing this trivialization forward to $\R^5$ we get an identification of a neighbourhood $U_j$ of $\Upsilon_j$ with the mapping torus of the linear map $(x,y) \mapsto (y,x)$, $x,y\in \C$, where the shifted copy of $\Upsilon_j$ is again the section $(1,1)$ (well-defined at the gluing fiber of the mapping torus as well).

Define
\[\ \widetilde{M}_{q}[v]=\{(x, y, \tau) \in U_{q} \ | \ xy=1\} \simeq S^1 \times I \times I .\]

\noindent These pieces together form a submanifold in $ \R^5$ :
\[\widetilde{M}_j[v]=\bigcup_{q \in \Upsilon_j} \widetilde{M}_{q}[v] \subset U_j  \]

\noindent $\widetilde{M}_j[v]$ is diffeomorphic to $S^1 \times I \times S^1 $ for untwisted components, and it is diffeomorphic to $Y$ for twisted components.
In both cases, $\Upsilon_j^{(v)}$ is a curve contained in $ \widetilde{M}_j[v]$.
 
Intuitively the boundaries $ \partial \widetilde{M}_j[v] $ and $ \partial (U_j \cap f(S^3))$ are close to each other. We can identify them by an isotopy near the boundaries in $ \partial U_j $, defined as follows. Pick a smooth function $\eta:[0,\infty) \to [0,1]$ such that $\eta = 1$ on $[0,2]$ and $\eta=0$ far from $0$. Define $\Psi: \C^2 \to \C^2$ as

\begin{equation*}
    \Psi(x, y)=\left(\eta(|y|) x, \eta(|x|)y\right).
\end{equation*}
$\Psi$ is the identity map in a neighborhood of the origin that contains the point $(1,1)$. Far from the origin small neighborhoods of the coordinate axes are collapsed onto the corresponding axes, that is, if $|x|$ is small and $|y|$ is large, then $\Psi(x, y)=(0, y)$, and if $|y|$ is small and $|x|$ is large, then $\Psi(x, y)=(x, 0)$. 

Consider the images of the above defined pieces,
\begin{equation*}
\Psi(\widetilde{M}_{p_0}[v])=\{(\eta(|y|)x, \eta(|x|)y, \tau) \in U_{p_0} \ | \ xy=1\}.
\end{equation*}
This modifies the cylinders only on the `edges' where either $|x|$ is large and $|y|$ is small or vice versa, so there our set stays an embedded manifold. The union of $\Psi(\widetilde{M}_{p_0}[v])$ for every point of $\Upsilon_j$ form the submanifold $\Psi(\widetilde{M}_j[v]) \subset U_j$ isotopic to $\widetilde{M}_j[v]$. Its boundary is exactly the same as $ \partial (U_j \cap f(S^3))$, and $\Upsilon_j^{(v)}$ is contained in $\Psi(\widetilde{M}_j[v])$.

Now we are able to define
\[ \widetilde{M}[v] := \left(f(S^3) \setminus \bigcup_{j \in J} U_j \right) \underset{\partial}{\cup} \left( \bigcup_{j \in J} \Psi(\widetilde{M}_j[v]) \right). \]
$\widetilde{M}[v]$ is a closed oriented submanifold of $ \R^5 $.

\begin{prop}
	$\widetilde{M}[v]$ is diffeomorphic to $ {M}[v]$.
\end{prop}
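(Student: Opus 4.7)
The plan is to build an explicit diffeomorphism by matching the two constructions piece-by-piece. The ``outer'' piece $f(S^3) \setminus \bigcup_{j \in J} U_j^\circ$ of $\widetilde{M}[v]$ is identified with $S^3 \setminus \bigcup_i {\rm int}(N_i)$ via $f$ itself: after choosing each $N_i$ small enough that $f(N_i) \subset U_j$ for $j=\{i,\sigma(i)\}$, the restriction of $f$ to $S^3 \setminus \bigcup_i {\rm int}(N_i)$ is an embedding, since the only self-intersections of $f$ are confined to $\gamma$. This gives a canonical diffeomorphism between the complement parts of the two manifolds.

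For each inserted piece I would use the local model to identify $\Psi(\widetilde{M}_j[v])$ with the gluing piece on the $M[v]$-side. For an untwisted $\Upsilon_j$, the description in \eqref{eq:loktriv2} (together with the trivializations of $\nu(\gamma_i)$ and $\nu(\gamma_{\sigma(i)})$ in which $v_i$, $v_{\sigma(i)}$ become constant sections) exhibits $\widetilde{M}_j[v]$ as a trivial bundle over $\Upsilon_j \cong S^1$ with fiber $\{xy=1\} \cong S^1 \times \R$. Its $\Psi$-image is a collar cylinder connecting the two boundary tori $f(\partial N_i)$ and $f(\partial N_{\sigma(i)})$; inserting such a cylinder and then gluing along its ends is diffeomorphic to identifying $\partial N_i$ with $\partial N_{\sigma(i)}$ directly as in the definition of $M[v]$. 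For a twisted $\Upsilon_j$ the analogous local picture is a mapping torus with monodromy $(x,y)\mapsto(y,x)$, which reproduces the defining relation of $Y$.

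The main point, and the step I expect to require the most care, is verifying that the boundary identifications induced by the $\Psi$-construction agree with the abstract gluing maps $\phi_i$. Working in the chart of \eqref{eq:loktriv2}, a point $(x_0, 1/x_0, \tau_0)$ on the hyperbola is sent by $\Psi$ to $(x_0, 0, \tau_0)$ on the $y=0$ sheet when $|x_0|$ is large, and to $(0, 1/x_0, \tau_0)$ on the $x=0$ sheet when $|x_0|$ is small. Letting $\arg(x_0)$ increase by $2\pi$ decreases $\arg(1/x_0)$ by $2\pi$, so the meridian $m_i \subset f(\partial N_i)$ is matched through the hyperbola with $-m_{\sigma(i)}$. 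Sliding $\tau$ at a fixed hyperbola point produces the longitudes $\tau \mapsto (x_0, 0, \tau)$ on one side and $\tau \mapsto (0, 1/x_0, \tau)$ on the other, which by the choice of trivialization coincide with $f(\gamma_i + \delta v_i)=l_i$ and $f(\gamma_{\sigma(i)} + \delta v_{\sigma(i)})=l_{\sigma(i)}$, giving $l_i \mapsto l_{\sigma(i)}$. The twisted formulas $m_i \mapsto -m$ and $l_i \mapsto c$ follow analogously from the mapping-torus structure: one loop around $\gamma_i$ in $S^3$ covers $\Upsilon_j$ twice, which is exactly how $c$ is defined on $\partial Y$. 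Pasting the piecewise maps along the matching boundary tori yields the required diffeomorphism $\widetilde{M}[v] \to M[v]$.
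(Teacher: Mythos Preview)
Your proposal is correct and follows essentially the same approach as the paper, only in considerably more detail. The paper's proof is a one-sentence sketch: it observes that the real-positive arcs $\{(\alpha,1/\alpha)\mid\alpha\in\R^+\}$ on the cylinders $xy=1$, pushed through $\Psi$, join the longitudes $l_i$ and $l_{\sigma(i)}$, and concludes that $\widetilde M[v]$ realizes exactly the surgery prescription of $M[v]$. Your argument spells out the same identification and additionally checks the meridian correspondence $m_i\mapsto -m_{\sigma(i)}$ explicitly via the argument of $x_0$ versus $1/x_0$, which the paper leaves implicit.
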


\begin{proof} Consider the union of the curves 
 $ \{(\alpha  , \frac{1}{\alpha})  \ | \ \alpha \in \R^+ \} )$ on the cylinders $xy = 1$ for all $\tau$. Its image under $\Psi$ joins the longitudes $l_i$ and $l_{\sigma(i)}$ in $\Psi(\widetilde{M}_j[v])$. Hence $\widetilde{M}[v]$ is constructed by the same surgery as $M[v]$, see Formula \eqref{eq:ragtop} and the gluing function $\phi_i$.
\end{proof}

\begin{prop}\label{pr:hurkkey} \csere{For untwisted components,}
	\[	\lk_{ \R^5} \left(\widetilde{M}[v] , \Upsilon_j\right)=c_j (v_i, v_{ \sigma (i)} ) \]
    \csere{and for twisted components
    \[	\lk_{ \R^5} \left(\widetilde{M}[v] , \Upsilon_j\right)=c_j (v_i ) \]
    hold.}
\end{prop}

\begin{proof}
\csere{Consider the untwisted case first.} We choose $0 \ll \delta_1 \ll \delta_2$ sufficiently small values, then define the shifted copy $\Upsilon^{(v)}_j$ using $\delta_2$ and the nearby manifold $\widetilde M[v]$ using $\delta_1$. Then for any $\lambda \in \C \setminus \{ 1, -1 \}$, $|\lambda|=1$ we have

\[
\begin{aligned}
c_j(v_i, v_{\sigma(i)}) &\overset{\text{\ding{172}}}{=} \lk\left(f(S^3),\Upsilon^{(v)}_j\right) \overset{\text{\ding{173}}}{=} \lk\left(\widetilde{M}[v],\Upsilon^{(v)}_j\right) \overset{\text{\ding{174}}}{=} \lk\left(\widetilde M[v],\Upsilon_j+\delta_2(v_i+v_{\sigma(i)})\right)= \\
&\overset{\text{\ding{175}}}{=} \lk\left(\widetilde M[v], \Upsilon_j+\lambda\cdot\delta_2(v_i+v_{\sigma(i)})\right) \overset{\text{\ding{176}}}{=} \lk\left(\widetilde M[v],\Upsilon_j\right)
\end{aligned}
\]
Equality \ding{172} is the definition of $c_j$. In equality \ding{173}, we are changing $f(S^3)$ to the homologous manifold $\widetilde M[v]$, using the surface $W=\{(x,y,\tau) \mid 0 \leq xy \leq 1 \}$; the linking number does not change because the curve $\Upsilon^{(v)}_j$ does not intersect $W$ at all. Equality \ding{174} is again just the definition of the shifted copy of $\Upsilon_j$. Equality \ding{175} holds because we can connect the two shifted copies of $\Upsilon_j$ by a homotopy that avoids $\widetilde M[v]$. Indeed, if $s:[0,1] \to \C$ parametrizes a unit circle arc from $s(0)=1$ to $s(1)=\lambda$, then $t \mapsto \Upsilon_j + s(t) \cdot \delta_2(v_i+v_{\sigma(i)})$ is such a homotopy. Finally, \ding{176} holds because the linear homotopy between $\Upsilon_j$ and its shift by $\lambda \cdot \delta_2(v_i+v_{\sigma(i)})$ contains only points whose local $x$ and $y$ coordinates are either both $0$ (so the point is on $\Upsilon_j$) or both have argument equal to that of $\lambda$, implying $xy \not \in \R$ and consequently avoiding the set $\{xy=1\}$.

\csere{For the twisted case, we have an analogous sequence of equalities:}
\[
\begin{aligned}
c_j(v_i) &=\lk\left(f(S^3),\widetilde\Upsilon^{(v)}_j\right) = \lk\left(\widetilde{M}[v],\widetilde\Upsilon^{(v)}_j\right) = \lk\left(\widetilde M[v],\Upsilon_j+\delta_2(v_i+v_i\circ \iota)\right)= \\
&= \lk\left(\widetilde M[v], \Upsilon_j+\lambda\cdot\delta_2(v_i+v_i \circ \iota)\right) = \lk\left(\widetilde M[v],\Upsilon_j\right)
\end{aligned}\]
\csere{with the first equality the defining equation~\eqref{eq:cjTwisted} of $c_j$ and the rest completely analogous to the untwisted case.}
\end{proof}



\section{Topological reformulation of the vertical indices}\label{s:topref}
\subsection*{}
\setcounter{subsection}{1}
Consider a finitely determined holomorphic germ $\Phi: (\C^2, 0) \to (\C^3, 0)$. Recall from the Preliminaries (Subsection~\ref{ss:boundmilnor}) that $\Phi|_{\mathfrak{S}} : \mathfrak{S} \simeq S^3 \looparrowright S^5$ denotes the stable immersion associated with $\Phi$. The image of $\Phi$ is  a non-isolated singularity $(X, 0) \subset (\C^3, 0)$ defined by the zero set of a germ of function $g: (\C^3, 0) \to (\C, 0)$. The Milnor fibre $F$ of $(X,0)$ is defined as the $\delta$-level set of $g$ intersected with the Milnor ball $B^6_{\epsilon}$.

 By the decomposition formula \eqref{eq:rag}, the boundary $\partial F$ is an embedded nearby $3$-manifold of the associated stable immersion $ \Phi|_{ \mathfrak{S}}$, that is,  $ \partial F \simeq \widetilde{M}[v] \subset S^5 $ with a suitable choice of normal framing $v$ of $ \gamma \subset \mathfrak{S} $.

\begin{prop}
	For all $j \in J $, $ \lk_{ S^5} (\partial F, \Upsilon_j)=0$.
\end{prop}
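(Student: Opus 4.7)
The plan is to compute the linking number via a Seifert-membrane (intersection-number) argument inside the Milnor ball $B^6_\epsilon$. Both cycles bound naturally in $B^6_\epsilon$: the Milnor fibre $F$ itself is an oriented smooth $4$-submanifold of $B^6_\epsilon$ with $\partial F \subset S^5_\epsilon$, while the intersection $\Sigma_j \cap B^6_\epsilon$ is a $2$-dimensional complex-analytic set (topologically the cone on $\Upsilon_j$) with boundary $\Upsilon_j \subset S^5_\epsilon$. Since $\dim F + \dim(\Sigma_j \cap B^6_\epsilon) = 4 + 2 = \dim B^6_\epsilon$, the standard relationship between linking in the bounding sphere and algebraic intersection in the ball gives
\[
\lk_{S^5_\epsilon}(\partial F, \Upsilon_j) \;=\; F \cdot (\Sigma_j \cap B^6_\epsilon),
\]
where the right-hand side is interpreted as a signed count of (transverse) intersection points in the interior of $B^6_\epsilon$, or more invariantly as the pairing of the relative homology classes $[F] \in H_4(B^6_\epsilon, S^5_\epsilon;\Z)$ and $[\Sigma_j \cap B^6_\epsilon] \in H_2(B^6_\epsilon, S^5_\epsilon;\Z)$ whose boundaries recover $[\partial F]$ and $[\Upsilon_j]$ respectively.

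The key observation is then immediate: $F \subset g^{-1}(\delta)$ while $\Sigma_j \subset X = g^{-1}(0)$, and $\delta \neq 0$. Hence $F$ and $\Sigma_j \cap B^6_\epsilon$ are disjoint as subsets of $B^6_\epsilon$, so the algebraic intersection vanishes and therefore $\lk_{S^5_\epsilon}(\partial F, \Upsilon_j) = 0$.

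The main steps that require care, but are not substantive obstacles, are the following. First, one needs transversality of $F$ and of $\Sigma_j \cap B^6_\epsilon$ with $S^5_\epsilon$; both are standard facts from the Milnor fibration setup and from the conic structure of analytic sets, valid for suitably small $\epsilon$ and $0 < \delta \ll \epsilon$. Second, one must note that the origin, where $\Sigma_j$ may fail to be smooth, causes no trouble: $g(0)=0\neq\delta$ already guarantees $0\notin F$, so the potential singular point of the Seifert membrane $\Sigma_j \cap B^6_\epsilon$ lies away from any intersection with $F$, and $\Sigma_j \cap B^6_\epsilon$ still defines a perfectly good relative $2$-cycle. No orientation convention needs to be tracked for the final answer, as the result is zero regardless of sign choices.
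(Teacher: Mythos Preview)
Your proof is correct and is essentially the same argument as the paper's: both use that $\Upsilon_j$ bounds $\Sigma_j\cap B^6_\epsilon$ and $\partial F$ bounds $F$ in the Milnor ball, and that these membranes are disjoint because $\Sigma_j\subset g^{-1}(0)$ while $F\subset g^{-1}(\delta)$. You have simply spelled out the linking--intersection relation and the technical caveats (transversality with $S^5_\epsilon$, the singular point of $\Sigma_j$) more explicitly than the paper's one-line proof.
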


\begin{proof} $ \Upsilon_j = \partial \Sigma_j \subset f^{-1}(0) $ and $ F = f^{-1}( \delta) \cap B^6_{ \epsilon} $, hence $F \cap \Sigma_j = \emptyset$.
\end{proof}

\csere{Recall that by Proposition~\ref{pr:hurkkey}  the linking number of the nearby 3-manifold of $\Phi|_{\mathfrak{S}}(\mathfrak{S})$ corresponding to $v$ (which is $\partial F$ in this case) and $\Upsilon_j$ is equal to  $
	c_j(v_i,v_{\sigma(i)})$ ($c_j(v_i)$ in the twisted case). We conclude that}
\begin{cor}\label{co:cmiln}
	$
	c_j(v_i,v_{\sigma(i)})=0$ \csere{holds for untwisted ($i \neq \sigma(i) $), and $
	c_j(v_i)=0$ holds for twisted ($i = \sigma(i) $)components.}
\end{cor}

Comparing this with Corollary \ref{co:fels} \csere{stating that $ c_j (\mathcal{S}_i , \mathcal{N}_i)= 0 $ holds in the untwisted case,} that means $ \mathcal{S}_i $ and $ \mathcal{N}_i $ glue together in the construction of $ \partial F$. That is, with the choice $v_i=\mathcal{S}_i$ on $\gamma_i$, its pair is $v_{\sigma(i)}=\mathcal{N}_i$ on $\gamma_{\sigma(i)}$.  In the twisted case \csere{ Corollary \ref{co:dkif} states that $c_j(v_i)=a(v_i)-\frac{1}{2} a(\mathcal{N}_i)$, which together with $c_j(v_i)=0$ implies that} $a(v_i)=\frac{1}{2} a(\mathcal{N}_i)$ holds for the framing $v_i$ that glues to the longitude $c$ of $\partial Y$ \csere{. That is,} $v_i$ is the average between $\mathcal{S}_i$ and $\mathcal{N}_i$.

Recall that $C \cdot D$ denotes the intersection multiplicity of two complex plane curves $(C, 0), (D,0) \subset (\C^2, 0)$ at 0.

\begin{prop}\label{pr:toprefvert} The vertical index $\mathfrak{vi}_j$ of the component $j=\{i, \sigma(i)\}$ can be expressed as
\begin{equation}\label{eq:reform}\mathfrak{vi}_j=
\left\{	\begin{array}{ccc}
a(\mathcal{N}_i)-\sum_{k \neq i} D_i \cdot D_k - \sum_{k \neq \sigma(i)} D_{\sigma(i)} \cdot D_k & \mbox{if} & i \neq \sigma(i) \\
\frac{1}{2} a(\mathcal{N}_i)- \sum_{k \neq i} D_i \cdot D_k &  \mbox{if} & i = \sigma(i) ,\\
 \end{array}
\right.
\end{equation}
\end{prop}

\begin{proof} Although the description of the gluing of $\partial F$ is simple in terms of the Seifert framing $\mathcal{S}_i$ of $\gamma $ and the global normal framing $\mathcal{N}_i$ of the immersion $\Phi|_{\mathfrak{S}}$, the vertical indices describe the gluing in terms of the Seifert framing $s_i$ of $\gamma_i$. The intersection multiplicities in equation \eqref{eq:reform} express the difference between $\mathcal{S}_i$ and $s_i$, indeed, $D_i \cdot D_k$ is equal to the linking number of their links $\gamma_i$ and $\gamma_k$. In particular:

\emph{Untwisted case $ i \neq \sigma(i)$:} Choose $v_i = s_i $, then  $ \mathfrak{vi}_j = a_{ \sigma(i)} (v_{ \sigma(i)})$ holds for its pair $v_{\sigma(i)}$. \csere{Indeed, $s_i$ is the topological longitude (Seifert framing) of $\gamma_i$, and, on the one hand, it glues to the framing $v_{\sigma(i)}$ on $\gamma_{\sigma(i)}$, on the other hand, by Theorem~\ref{th:milnper}, $s_i$  glues to the topological longitude $s_{\sigma(i)} $ plus $\mathfrak{vi}_j$ times the oriented meridian. This means that the linking number of $v_{\sigma(i)}$ with $\gamma_{\sigma(i)}$ is equal to $\mathfrak{vi}_j$, but this linking number is equal to $a_{ \sigma(i)} (v_{ \sigma(i)})$ by definition, see equation~\eqref{eq:ai}.} By \csere{point (f) of} Proposition \ref{pr:of}, 
\csere{\[ a(s_i)= \sum_{k \neq i } \lk_{ S^3} ( \gamma_i, \gamma_k)= \sum_{k \neq i} D_i \cdot D_k . \]}
\csere{By Corollary~\ref{co:ckif} we have $c_j (v_i,v_{ \sigma(i)})= a(v_i)+a(v_{ \sigma(i)})-a(\mathcal{N}_i)$, and by Corollary~\ref{co:cmiln} we have $c_j (v_i,v_{ \sigma(i)})=0$, hence we obtain that}
 \[ a(v_{ \sigma(i)})=
 c_j(v_i, v_{\sigma(i)})+a(\mathcal{N}_i)-a(v_i)
 =a(\mathcal{N}_i)- \sum_{k \neq i} D_i \cdot D_k. \] 
\csere{Comparing it with point (d) of} Proposition~\ref{pr:of} \csere{then} implies
\[ \mathfrak{vi}_j = a_{ \sigma(i)} (v_{ \sigma(i)})=a (v_{ \sigma(i)})-\sum_{k \neq \sigma(i)} D_{\sigma(i)} \cdot D_k,\]
\csere{and the first case of} equation~\eqref{eq:reform} \csere{is proved}.

\emph{Twisted case $ i= \sigma(i)$:} Let $v_i$ be the framing of $ \gamma_i \subset \mathfrak{S} $ which glues to the longitude $c$ of $ \partial Y $ in the construction of $ \partial F $.
\csere{As discussed after Corollary~\ref{co:cmiln}, $a(v_i)= \frac{1}{2} a(\mathcal{N}_i) $ (this follows from Corollaries~\ref{co:cmiln} and \ref{co:dkif}). We also have  $ \mathfrak{vi}_j = a_i (v_i) $. Indeed, on one hand,  $c$ glues to the topological longitude $s_i$ plus $\mathfrak{vi}_j$ times the meridian by Theorem \ref{th:milnper}, on the other hand, $c$ glues to the framing $v_i$ on $\gamma_i$. This means that the linking number of $v_i$ with $\gamma_i$ is equal to  $\mathfrak{vi}_j$, but this linking number is equal to $a_i(v_i)$ by definition. By point (d) of Proposition~\ref{pr:of} we have
\[ \mathfrak{vi}_j = a_i (v_i)=a (v_i)-\sum_{k \neq i} D_i \cdot D_k,\]
and substituting $a(v_i)= \frac{1}{2} a(\mathcal{N}_i) $ proves the second case of equation~\eqref{eq:reform}.
}
\end{proof}

\csere{Recall that by our convention $L=L_1=-L_2$ (see Subsection~\ref{ss:eszinv}), and $L_1(\Phi|_{\mathfrak{S}})=-\frac{1}{2}  \sum_{i=1}^l a(\mathcal{N}_i)$ holds by Corollary~\ref{co:L12}. Therefore, for the sum of the right hand side of equation~\eqref{eq:reform} we obtain that} 

\begin{cor}\label{cor:sumofthe}
	\[\sum_{j \in J} \mathfrak{vi}_j =-  \sum_{i \neq k} D_i \cdot D_k -L(\Phi|_{\mathfrak{S}})  = -  \sum_{i \neq k} D_i \cdot D_k +3 T( \Phi)-C(\Phi) .
	\]
\end{cor}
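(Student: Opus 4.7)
The plan is to sum formula \eqref{eq:reform} over all $j \in J$ and to recognize the outcome through the componentwise decomposition of $L$ given in Corollary~\ref{co:L12}, finally converting $L(\Phi|_{\mathfrak{S}})$ into $C(\Phi)-3T(\Phi)$ via \eqref{eq:LCT}.

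First I would handle the intersection-multiplicity terms. For an untwisted $j=\{i,\sigma(i)\}$ formula \eqref{eq:reform} contributes $\sum_{k\neq i} D_i\cdot D_k + \sum_{k\neq \sigma(i)} D_{\sigma(i)}\cdot D_k$, while a twisted $j=\{i\}$ contributes $\sum_{k\neq i} D_i\cdot D_k$. Running over all $j\in J$, each index $i\in\{1,\dots,l\}$ therefore contributes its sum $\sum_{k\neq i} D_i\cdot D_k$ exactly once, giving an overall contribution of $-\sum_{i\neq k} D_i\cdot D_k$ after accounting for the sign in \eqref{eq:reform}.

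Next I would assemble the $a(\mathcal{N}_i)$-terms. For an untwisted pair $\{i,\sigma(i)\}$ formula \eqref{eq:reform} contributes $a(\mathcal{N}_i)$; by Corollary~\ref{co:diff2} we have $a(\mathcal{N}_i)=a(\mathcal{N}_{\sigma(i)})=\Delta_j(\Phi|_{\mathfrak{S}})$, so this contribution equals $\tfrac12\bigl(a(\mathcal{N}_i)+a(\mathcal{N}_{\sigma(i)})\bigr)$. For a twisted $j=\{i\}$ the contribution is already $\tfrac12 a(\mathcal{N}_i)$. Collecting everything produces $\tfrac12\sum_{i=1}^l a(\mathcal{N}_i)$, which by Corollary~\ref{co:L12} equals $-L_1(\Phi|_{\mathfrak{S}})=-L(\Phi|_{\mathfrak{S}})$.

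Combining the two computations yields the first equality
\[
\sum_{j\in J}\mathfrak{vi}_j \;=\; -\sum_{i\neq k}D_i\cdot D_k \;-\; L(\Phi|_{\mathfrak{S}}),
\]
and the second equality is then an immediate substitution of the Ekholm--Sz\H ucs identity $L(\Phi|_{\mathfrak{S}})=C(\Phi)-3T(\Phi)$ from \eqref{eq:LCT}. I do not anticipate any real obstacle: every nontrivial input has been established in Sections~\ref{s:decL}--\ref{s:topref}, and what remains is the bookkeeping of the sum over $J$ matched against the two canonical decompositions of $L$.
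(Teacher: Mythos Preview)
Your proposal is correct and follows exactly the approach the paper itself takes: summing formula~\eqref{eq:reform} over $j\in J$ and invoking Corollary~\ref{co:L12} to identify $\tfrac12\sum_i a(\mathcal{N}_i)$ with $-L(\Phi|_{\mathfrak{S}})$, then applying~\eqref{eq:LCT}. You have simply spelled out the bookkeeping that the paper leaves to the reader.
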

\csere{The second equation follows from the equality $L(\Phi|_{\mathfrak{S}})=C(\Phi)-3T(\Phi)$ proved in \cite{NP, PS}.}
Although $L$ is a topological invariant which is usually hard to compute directly, the formula establishes a correspondence between invariants which can be computed in an algebraic way, see the examples below in Subsection~\ref{ss:examp}.

\begin{rem} These results show that the natural choice for the longitude of $ \partial N_i$ to describe the gluing of $ \partial F $ is $\mathcal{S}_i $, instead of ${s}_i$. If  in Theorem \ref{th:milnper} one uses the homological trivialization  $ (m_i, \mathcal{S}_i)$ of $ \partial N_i $, where $m_i$ is the oriented meridian, the gluing maps  read as  
	\begin{equation*} i \neq \sigma(i) \mbox{: \ \ }
\left( \begin{array}{cc}
-1 & a(\mathcal{N}_i) \\
0 & 1 \\
\end{array} \right)
\end{equation*}  
 \begin{equation*} i = \sigma(i) \mbox{: \ \ }
\left( \begin{array}{cc}
-1 & \frac{1}{2} a(\mathcal{N}_i) \\
0 & 1 \\
\end{array} \right)
\end{equation*}  

With this choice the sum of the gluing coefficients is \csere{$L_2(\Phi|_{ \mathfrak{S}})=-L(\Phi|_{ \mathfrak{S}})=3 T( \Phi)-C(\Phi) $}. 
\end{rem}

\begin{cor}\label{co:vitopinv}
    The set of the vertical indices $\{\mathfrak{vi}_j\}_{j \in J}$ is a topological invariant of $\Phi$, meaning that it is invariant under topological left-right equivalence. Therefore the diffeomorphism type of the boundary of the Milnor fibre is a topological invariant.
\end{cor}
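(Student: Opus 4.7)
The plan is to derive topological invariance of the multiset $\{\mathfrak{vi}_j\}$ from the reformulation \eqref{eq:reform}, which expresses each vertical index as a $\Z$-linear combination of (i) the intersection multiplicities $D_i\cdot D_k = \lk_{\mathfrak{S}}(\gamma_i,\gamma_k)$ among components of the double-point link and (ii) the integer $a(\mathcal{N}_i) = \Delta_{\{i,\sigma(i)\}}(\Phi|_{\mathfrak{S}})$ (halved in the twisted case). The task then splits into producing a well-defined bijection between the component index sets of $\Phi$ and of a topologically equivalent germ $\Phi'$, and checking that both (i) and (ii) transfer along this bijection.

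First I would verify that a topological left--right equivalence $(h,H)$ between $\Phi$ and $\Phi'$ restricts, after passing to suitable Milnor spheres, to a homeomorphism of pairs $(\mathfrak{S},\gamma) \to (\mathfrak{S}',\gamma')$ and of triples $(S^5, \Phi(\mathfrak{S}), \Upsilon) \to (S^5, \Phi'(\mathfrak{S}'), \Upsilon')$. Twistedness of a component is detected by whether the covering $\gamma_i \to \Upsilon_j$ is disconnected, which is a purely topological feature, so the induced bijection $J \to J'$ respects the twisted/untwisted dichotomy. Ingredient (i) is then automatically preserved, since linking numbers are invariants of the topological link type of $\gamma \subset \mathfrak{S}$. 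As a sanity check at the level of sums, \eqref{eq:osszeg} combined with the topological invariance of $L$ (from \cite{PS}) immediately yields invariance of $\sum_j \mathfrak{vi}_j$.

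The main obstacle is invariance of the componentwise data $a(\mathcal{N}_i)$, which a priori depends on the smooth global normal framing of the immersion. My approach is to reinterpret $\{a(\mathcal{N}_i)\}$ intrinsically through the embedded nearby $3$-manifolds of Section~\ref{s:nearby}. By Corollary~\ref{co:cmiln} together with Proposition~\ref{pr:hurkkey}, the multiset of vertical indices is exactly the multiset of gluing integers for which the resulting embedded nearby manifold $\widetilde{M}[v] \subset S^5$ is isotopic to $\partial F$ and satisfies $\lk_{S^5}(\widetilde{M}[v],\Upsilon_j) = 0$ for every $j$. Since the topological Milnor fibration of the defining equation $g$, and hence the pair $(S^5, \partial F)$ up to homeomorphism, is preserved by topological equivalence of $\Phi$, this zero-linking characterization makes $\{\mathfrak{vi}_j\}$ topologically intrinsic. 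The topological invariance of the diffeomorphism type of $\partial F$ then follows immediately from the surgery description \eqref{eq:rag}, whose inputs $(\mathfrak{S},\gamma)$ and $\{\mathfrak{vi}_j\}$ have just been shown to be topological invariants of $\Phi$.
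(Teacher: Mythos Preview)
Your reduction via equation~\eqref{eq:reform} and the handling of ingredient~(i) match the paper exactly. The divergence is in ingredient~(ii), the invariance of $a(\mathcal{N}_i)$, and there your argument has a genuine gap.

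You characterize the vertical indices through the zero-linking condition $\lk_{S^5}(\partial F,\Upsilon_j)=0$ and then assert that ``the topological Milnor fibration of the defining equation $g$, and hence the pair $(S^5,\partial F)$ up to homeomorphism, is preserved by topological equivalence of $\Phi$.'' That assertion is not available as input here. Topological left--right equivalence of $\Phi$ yields only a germ of homeomorphism $\psi:(\C^3,0)\to(\C^3,0)$ carrying the image $X$ to $X'$; it does not relate the functions $g$ and $g'$, their Milnor fibrations, or the embedded submanifolds $\partial F\subset S^5_\epsilon$ and $\partial F'\subset S^5_\epsilon$. Invariance of the Milnor fibration under ambient topological equivalence of a non-isolated hypersurface is not something you can take off the shelf. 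Worse, the conclusion ``the diffeomorphism type of $\partial F$ is a topological invariant'' is the second sentence of the corollary you are proving, so invoking invariance of the embedded $\partial F$ as a hypothesis makes the argument circular.

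The paper avoids this by never touching $\partial F$ in the invariance proof. It rewrites $a(\mathcal{N}_i)$ as $-c_j(\mathcal{S}_i,\mathcal{S}_{\sigma(i)})$ (respectively $-2c_j(\mathcal{S}_i)$ in the twisted case) and shows these linking numbers are preserved directly: the Seifert framing is realized by $\operatorname{grad}(d)$, whose shifted copy of $\gamma$ can be transported by the homeomorphism $\phi$ (sidestepping non-differentiability), and the resulting pushed-off copies of $\Upsilon_j$ are then compared via $\psi$. Everything in that computation lives in $\Phi(\mathfrak{S})$ and its complement in $S^5$, which $\psi$ genuinely carries to $\Phi'(\mathfrak{S}')$; no reference to $F$ or $\partial F$ is needed. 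To repair your route you would have to replace the appeal to the Milnor fibration by a direct proof that, for some topologically defined framing such as $\mathcal{S}$, the integer $\lk_{S^5}(\widetilde{M}[\mathcal{S}],\Upsilon_j)$ is preserved by $(\phi,\psi)$ --- but carrying that out is essentially the paper's computation.
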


\csere{\begin{rem}
Before the proof we notice the following.
\begin{itemize}
    \item Corollary~\ref{cor:sumofthe} implies that the \emph{sum} of the vertical indices is a topological invariant, but here we show a stronger statement, namely, the \emph{set} of the vertical indices is also topological.
    \item The proof of Corollary~\ref{co:vitopinv} does not use Corollary~\ref{cor:sumofthe}. Both Corollaries~\ref{cor:sumofthe} and \ref{co:vitopinv} are consequences of the topological reformulation of the vertical indices in Propostion~\ref{pr:toprefvert}.
    \item One can observe that  Corollaries ~\ref{cor:sumofthe} and \ref{co:vitopinv} (together with the knowledge that the intersection multiplicities of $D_i$ are topological) provide a new proof for the topological invariance of $L(\Phi|_{\mathfrak{S}})=C(\Phi)-3T(\Phi)$. However, the topological invariance of the vertical indices follows from our topological reformulation, which is similar to the construction of $L$. In fact, the proof of Corollary~\ref{co:vitopinv} is analogous to the proof of the topological invariance of $L(\Phi|_{\mathfrak{S}})$  in \cite[Proposition 3.2.2.]{PS}.
\end{itemize}
\end{rem}}

\begin{proof}
The topological left-right equivalence of two finitely determined germs $\Phi$ and $\Phi'$ means that there exist germs of  homeomorphisms $ \phi: (\C^2, 0) \to ( \C^2, 0)$ and $ \psi: (\C^3, 0) \to ( \C^3, 0)$ such that 
$\Phi'=\psi \circ \Phi \circ \phi$
holds. 
\begin{equation*} 
\begin{tikzcd}
( \C^2, 0) \arrow[r, "\Phi"] 
& ( \C^3, 0) \arrow[d, "\psi"] \\
( \C^2, 0) \arrow[r, "\Phi'"]
\arrow[u, "\phi"]
&  ( \C^3, 0)
\end{tikzcd}
\end{equation*}

The double point curves $(D, 0)=(d^{-1}(0), 0)$ of $\Phi$ and $(D', 0)=(d'^{-1}(0), 0)$ of $\Phi'$ are topologically equivalent germs of curves, in fact, $D=\phi(D')$. 
Their links $\gamma$, $\gamma'$ are of the same type as links in $\mathfrak{S}\simeq \mathfrak{S}'\simeq  S^3$. 

We have to show that the right hand side of equation~ \eqref{eq:reform} agrees for $\Phi$ and $\Phi'$. Since $D_i \cdot D_k$ is equal to the linking number of $\gamma_i$ and $\gamma_k$, these terms are equal. We only have to show that $a(\mathcal{N}_i)=a(\mathcal{N}'_i)$. $a(\mathcal{N}_i)$ is equal to $-c_j(\mathcal{S}_i, \mathcal{S}_{\sigma(i)})$ for untwisted components, and $-2c_j(\mathcal{S}_i)$ for twisted components.

$\mathcal{S}'_i$ can be represented by the normal framing $\mbox{grad} (d')$ along $\gamma'_i$. Although the normal framing $\mbox{grad} (d')$ cannot be pushed forward by $\phi$ since $\phi$ is not necessarily differentiable, the slightly shifted copy $\widetilde{\gamma}'$ of $\gamma'$ along $\mbox{grad} (d')$ can be. 
The image $\phi(\widetilde{\gamma}')$ determines a normal vector field denoted by $\phi_* (\mbox{grad} (d'))$ along $\gamma$, which is homotopic to $\mbox{grad}(d)$, both vector field represents the Seifert framing $\mathcal{S}$ along $\gamma$, and their restrictions to a component $\gamma_i$ represent $\mathcal{S}_i$.

The sum of the two copies of $d \Phi (\mbox{grad}(d))$ and $d \Phi (\phi_* (\mbox{grad} (d')))$ are homotopic normal fields along $\Upsilon_j$, thus the shifted copies $\widetilde{\Upsilon}_j$ and $\widetilde{\Upsilon}_j^{(2)}$ of $\Upsilon_j$ along these vector fields are homotopic in $S^5 \setminus \Phi(\mathfrak{S})=S^5 \setminus \Phi(\phi(\mathfrak{S}'))$. 

Therefore, $\lk_{S^5} (\Phi(\mathfrak{S}), \widetilde{\Upsilon}_j)=\lk_{S^5} (\Phi(\mathfrak{S}), \widetilde{\Upsilon}_j^{(2)})$. Applying $\psi$ to the whole configuration does not change the linking numbers, and $\psi (\Phi(\mathfrak{S}))=\Phi'( \mathfrak{S}')$, $\psi(\Upsilon)=\Upsilon'$, $\psi(\widetilde{\Upsilon}^{(2)})=\widetilde{\Upsilon}'$, therefore we obtain that
\[c_j(\mathcal{S}_i, \mathcal{S}_{\sigma(i)})=\lk_{S^5} (\Phi(\mathfrak{S}), \widetilde{\Upsilon}_j)=\lk_{S^5} (\Phi'(\mathfrak{S}'), \widetilde{\Upsilon}'_j)=c_j(\mathcal{S}'_i, \mathcal{S}'_{\sigma(i)}),\]
which proves the Corollary.
\end{proof}

\subsection{Corank--1 germs with \texorpdfstring{$T=0$}{T=0}}\label{ss:cor1T0}

 In \cite{NP2} equation~\eqref{eq:osszeg} is proved for corank--1 germs (that is, $\rk(d \Phi_0)=1$) with $T(\Phi)=0$ in algebraic way. We present a brief summary of this case.

 Those germs have a normal form $\Phi(s, t) = ( s, t^2, t d(s,t) ) $, where $d(s,t) = h(s, t^2) $ for  some germ $h$, such that $d(s,t)$  is not divisible by $t$. In this case the equation of the image is $g(x,y,z)= yh^2(x,y) - z^2 =0 $. See \cite{Mond0,nunodoodle} for details.

 For these germs, the equation of $D$ is $d$, the involution is $\iota(s,t)=(s,-t)$, and 
 \[\Sigma=\{(x,y, z) \in \C^3 \ | \ z=h(x,y)=0 \} \] 
 
 The transverse section $H$ can be chosen as $H(x,y,z)=z$, cf. Preliminaries (Subsection~\ref{ss:boundmilnor}). With this choice the $H$-vertical indices $\mathfrak{v}_j$ are 0. This fact follows from the product decomposition
\csere{\begin{equation}\label{eq:splittingspec}
    g(x,y,z)=(\sqrt{y} \cdot h(x,y) -z) \cdot (\sqrt{y} \cdot h(x,y) +z)
\end{equation}}
 around $\Sigma \setminus \{0\}$, for which the $\beta_{1,2}(\tau)$ coefficients in equation~\eqref{eq:beta} are constant functions.

 The pull-back of $H$ is $(H \circ \Phi)(s,t)=td(s,t)$, hence $D_{\sharp}=\{t=0\}$, and
 \[ \lambda_i=-\sum_{k \neq i} D_i \cdot D_k -D_i \cdot \{t=0 \},\]
 hence the vertical indices are
$\mathfrak{vi}_j=\lambda_i+\lambda_{\sigma(i)}$ for untwisted components, and $\mathfrak{vi}_j=\lambda_i$ for twisted components. Their sum is
\[\sum_{j \in J} \mathfrak{vi}_j=\sum_{i=1}^l \lambda_i=-\sum_{k \neq i} D_i \cdot D_k- D \cdot \{t=0\}.\]
The last term is
\[D \cdot \{t=0\}=\dim \frac{\mathcal{O}_{(\C^2,0)}}{(t, d(s,t))}=C(\Phi),\]
cf. Preliminaries (Subsection~\ref{ss:invfindet}), since in this case the ideal $(t,d(s,t)) \subset \mathcal{O}_{(\C^2,0)}$ is equal to the ramification ideal, the ideal generated by the $2 \times 2$ minors of the Jacobian $d \Phi$ of $\Phi$. Altogether we obtain
\[\sum_{j \in J} \mathfrak{vi}_j=-\sum_{k \neq i} D_i \cdot D_k- C(\Phi),\]
what we wanted to prove.

\subsection{Examples}\label{ss:examp}

The boundary of the Milnor fibre is presented for several examples  in \cite{NP2}. Unfortunately the vertical indices are not provided there, since the article focuses on the plumbing graph $\partial F$, which requires a modified version of $\mathfrak{vi}_j$. Here we provide the vertical indices and verify that equation~\eqref{eq:osszeg} holds. \csere{The signature of the Milnor fiber for these examples is computed in \cite{GCP}, and this computation also uses the vertical indices.}

Except for the last one, all examples are taken from
D. Mond's list  of simple germs  \cite[Table 1]{Mond2}, and among them only the family $H_k$ has nonzero triple point number $T$, hence the others are examples for the simplest case described in Subsection~\ref{ss:cor1T0}. The last example is a corank-2 germ from \cite{marar}.

\begin{ex}[The family $S_{k-1}$ ($k \geq 2$)]\label{ex:S}

\[  \Phi(s, t) = ( s, t^2, t^3 + s^k t ) \mbox{, and }
 g(x, y, z) = y(y+x^k)^2 - z^2  ,\]
 \[d(s,t)=t^2+s^k,\]
 \[C(\Phi)=k, \ T(\Phi)=0.\]
 
 \emph{If $k=2n+1$ is odd,} then $d$ is irreducible. Hence there is only one twisted component with $\mathfrak{vi}=-k$, which is equal to $-C(\Phi)$.

 \emph{If $k=2n$ is even,} then 
 \[d(s,t)=(s^n-it)(s^n+it),\]
and  the factors define the two preimages of an untwisted component, since $\Sigma$ has only one component, because $h(x,y)=y+x^k$ is irreducible. $\lambda_1=\lambda_2=-n-n=-k$, hence $\mathfrak{vi}=-2k$.
Equation~\eqref{eq:osszeg} holds as
\[-2k=\mathfrak{vi}=-(D_1 \cdot D_2+D_2 \cdot D_1)-C(\Phi)+3T(\Phi)=-(n+n)-k+0.\]

\csere{Although this example fits into the special case discussed in Subsection~\ref{ss:cor1T0}, with the computation of the vertical indices is simplified and avoids the Taylor series computations in equation~\eqref{eq:beta}, for the reader's convenience we illustrate this latter method on this example. }

\csere{
\begin{equation}
    \Sigma=\Phi(D)=\{(x,y,z) \in \C^3 \ | \ z=0 \text{ and } y+x^k=0 \},
\end{equation}
it has only one component. Its parametrization is given by
\begin{equation}
    p(\tau)=(\tau, -\tau^k, 0)
\end{equation}
}

\csere{The factorization $g=g_1 \cdot g_2$ described in equation~\eqref{eq:splittingspec} determines the factorization $T_2(g)(p(\tau))=T_1(g_1)(p(\tau)) \cdot T_1(g_2)(p(\tau))$ of the quadratic Taylor polynomial of $g$ centered at $0 \neq p(\tau) \in \Sigma$ with variables $x'=x-\tau$, $y'=y+\tau^k$ and $z'=z$. Namely, we have
\begin{equation}
    T_2 (g)(p(\tau))(x',y',z')=(ik\tau^{\frac{3k-2}{2}}x'+ik\tau^{\frac{k}{2}}-z')(ik\tau^{\frac{3k-2}{2}}x'+ik\tau^{\frac{k}{2}}+z')
\end{equation}
}

\csere{We compute the vertical indices with two different choices of the transverse section $H$.}

\csere{First we choose $H_1(x,y,z)=z$, as Subsection~\ref{ss:cor1T0} suggests. Then 
\begin{equation}
    T_1(H_1)(p(\tau))=-\frac{1}{2} T_1(g_1)(p(\tau))+\frac{1}{2} T_1(g_2)(p(\tau)),
\end{equation}
hence, $\beta_1(\tau)=-\frac{1}{2}$ and $\beta_2(\tau)=\frac{1}{2}$ are constant functions, implying that $\mathfrak{v}=\text{ord}(\beta_1 \beta_2)=0$ when calculated using $H_1$.
}

\csere{The second choice is $H_2(x,y,z)=y+x^k$. For this we have
\begin{equation}
    T_1(H_2)(p (\tau))=-i\tau^{-\frac{k}{2}} T_1(g_1)(p(\tau))-i\tau^{-\frac{k}{2}} T_1(g_1)(p(\tau)). 
\end{equation}
Although $\beta_1(\tau)=\beta_2(\tau)=-i\tau^{-\frac{k}{2}}$ are not globally defined meromorphic functions if $k$ is odd, their product $\beta_1(\tau)\beta_2(\tau)=-\tau^{-k}$ is always globally defined of order $\mathfrak{v}=-k$ when the calculation uses $H_2$.
}

\csere{We see that the vertical index $\mathfrak{v}$ with respect to $H$ really does depend on the choice of $H$. But $\mathfrak{vi}$ does not, as we can verify in this example. Let us calculate the terms $\lambda$ with the two choices $H_1$ and $H_2$! }

\csere{The pull-back of $H_1$ is
\begin{equation}
H(\Phi(s,t))=t(s^k+t^2)=td(s,t) 
\end{equation}
 in accordance with Subsection~\ref{ss:cor1T0}. The extra curve is $D_{\sharp}=\{t=0\}$. This gives the $\lambda$ terms and the vertical indices we computed above, namely, if $k=2n+1$ is odd, then
\begin{equation}
\mathfrak{vi}=\lambda+\mathfrak{v}=D \cdot D_{\sharp}+0=-k, 
\end{equation}
and if $k=2n$ is even, then
\begin{equation}
\lambda_1=-D_1 \cdot D_2-D_1 \cdot D_{\sharp}=-k \text{, similarly } \lambda_2=-k \text{, and } \mathfrak{vi}=\lambda_1+\lambda_2+\mathfrak{v}=-2k.
\end{equation}
}

\csere{
The pull-back of $H_2$ is
\begin{equation}
H(\Phi(s,t))=s^k+t^2=d(s,t), 
\end{equation}
hence the extra curve $D_{\sharp}$ is empty with the choice $H_2$. The $\lambda$ terms and the vertical index in case are the following:
if $k=2n+1$ is odd, then $\lambda=0$, and
\begin{equation}
\mathfrak{vi}=\lambda+\mathfrak{v}=0-k=-k, 
\end{equation}
and if $k=2n$ is even, then
\begin{equation}
\lambda_1=\lambda_2=-D_1 \cdot D_2=-n \text{, and } \mathfrak{vi}=\lambda_1+\lambda_2+\mathfrak{v}=-n-n-k=-2k.
\end{equation}
We see that the vertical index $\mathfrak{vi}$ is the same with the choices $H_1$ and $H_2$.
}

\csere{Note that for the germs discussed in Subsection~\ref{ss:cor1T0}, two special choices $H_1(x,y,z)=z$ and $H_2(x,y,z)=h(x,y)$ are always possible. Their pull-back germs are $H_1(\Phi(s,t))=td(s,t)$, implying that $D_{\sharp}=\{t=0\}$ and $\mathfrak{v}_j=0$ with this choice, and $H_2(\Phi(s,t))=d(s,t)$, hence $D_{\sharp}$ is empty with this choice.}

\end{ex}

\begin{ex}[The family $B_{k} $ ($k\geq 1$)]
\[ \Phi(s, t) = ( s, t^2, s^2t + t^{2k+1} ) \mbox{, and }
 g(x,y,z)= y(x^2+y^k)^2 - z^2  ,\]
 \[d(s,t)=s^2+t^{2k}=(s+it^k)(s-it^k),\]
 \[C(\Phi)=2, \ T(\Phi)=0.\]
 
 Although $D$ always has two components, the number of the components of $\Sigma$ depends on the parity of $k$.

 \emph{If $k=2n+1$ is odd,} then $\Sigma$ has one component, since $h(x,y)=x^2+y^k$ is irreducible. This component is untwisted with preimages $D_1$ and $D_2$. 
 Then $\lambda_1=\lambda_2=-k-1$ and $\mathfrak{vi}=-2k-2$. 
 Equation~\eqref{eq:osszeg} holds as
\[-2k-2=\mathfrak{vi}=-(D_1 \cdot D_2+D_2 \cdot D_1)-C(\Phi)+3T(\Phi)=-(k+k)-2+0.\]

\emph{If $k=2n$ is even,} then $h(x,y)=(x-iy^n)(x+iy^n)$, hence $\Sigma$ has two components, $\Sigma_1$ is twisted with preimage $D_1$ and $\Sigma_2$ is twisted with preimage $D_2$. Then 
\[\lambda_1=\mathfrak{vi}_1=\lambda_2 =\mathfrak{vi}_2=-k-1,\]
and we check that
\[ \mathfrak{vi}_1+\mathfrak{vi}_2=-(D_1 \cdot D_2+D_2 \cdot D_1)-C(\Phi)+3T(\Phi)=-2k-2.\]

\end{ex}

\begin{ex}[The family $C_k $ ($k\geq 1$)]
\[\Phi(s, t) = ( s, t^2, st^3 + s^k t ) \mbox{, and }
 g(x,y,z) = y(xy+x^k)^2 - z^2 ,\]
 \[d(s,t)=st^2+s^k,\]
 \[C(\Phi)=k , \ T(\Phi)=0.\]

 Since $h(x,y)=x(y+x^{k-1})$, $\Sigma$ has two components, \[\Sigma_A=\{x=z=0\} \mbox{, and } \Sigma_B=\{y+x^{k-1}=z=0\} .\]

\emph{If $k=2n+1$ is odd,} then 
\[d(s,t)=s(t+is^n)(t-is^n)\]
has three components. $D_1 \to \Sigma_A$ is a twisted component, $D_2 \cup D_3 \to \Sigma_B$ is an untwisted component. 
\[\lambda_1=-1-1-1=-3 \mbox{, and } \lambda_2=\lambda_3=-n-n-1=-k, \]
hence $\mathfrak{vi}_A=-3$ and $\mathfrak{vi}_B=-2k$. The left hand side of equation~\eqref{eq:osszeg} is
$ \mathfrak{vi}_A + \mathfrak{vi}_B=-2k-3$, while the right hand side is
\[-(2D_1 \cdot D_2+2D_1 \cdot D_3 +2D_2 \cdot D_3)-C(\Phi)+3 T(\Phi)=-(2+2+2n)-k=-2k-3. \]
 
\emph{If $k=2n$ is even,} then $D$ has two components, $D_1 \to \Sigma_A$ and $D_2 \to \Sigma_B$ are twisted components. $\lambda_1=\mathfrak{vi}_A=-3$ and $\lambda_2=\mathfrak{vi}_B=-(k-1)-2=-k-1$. The sum is 
\[\mathfrak{vi}_A + \mathfrak{vi}_B=-2D_1 \cdot D_2-C(\Phi)+3 T(\Phi)=-k-4.\]

\end{ex}

\begin{ex}[The family $H_{k} $ ($k\geq 1$)]\label{ex:H}
\[ \Phi(s, t) = ( s, st + t^{3k-1}, t^3 ) \mbox{, and }  g(x, y, z) = z^{3k-1} - y^3 + x^3 z + 3 xyz^k \]
\[d(s,t)=(s-\rho^2 t^{3k-2})(s-\rho t^{3k-2}) \mbox{, where } \rho=-\frac{1}{2}+\frac{\sqrt{3}}{2}i.\]
\[C(\Phi)= 2, \ T(\Phi)=k-1\]

The equation $g$ of the image of $\Phi$ and the triple point number $T(\Phi)$ can be computed by using fitting ideals, see e.g. Example 1.3.4. in \cite{gtezis}. $\Sigma$ has one component, hence it is \csere{untwisted}. This follows from the fact that the involution
\[\iota(s,t)=
\left\{ \begin{array}{ccc}
(s, \rho t) & \mbox{if} & (s,t) \in D_1 \\
(s,\rho^2 t) & \mbox{if} & (s,t) \in D_2 \\
\end{array} \right.
\]
maps $D_1$ to $D_2$ and $D_2$ to $D_1$. The computation of the vertical index is much more complicated than in the previous examples, it was done in \cite{NP2, gtezis}, and the result is $\mathfrak{vi}=-3k-1$. Equation~\eqref{eq:osszeg} reads as
\[-3k-1=\mathfrak{vi}=-2D_1 \cdot D_2-C(\Phi) +3 T(\Phi)=-2(3k-2)-2+3(k-1).\]

\end{ex}

\begin{ex}[A corank--2 germ from \cite{marar}] 
\[ \Phi (s, t) = (s^2, t^2, s^3 + t^3+ st) ,\]
\[g(x, y, z) = x^6-2x^4y + x^2y^2 - 2x^3y^3-2xy^4 + y^6 - 8x^2y^2z - 2x^3z^2-2xyz^2-2y^3z^2+z^4  ,\]
\[d(s,t)=\prod_{i=1}^5d_i(s,t)=(s+t^2)(s^2+t)(s+t)(s+\rho t)(s+\rho^{-1}t) \mbox{, where } \rho=-\frac{1}{2}+\frac{\sqrt{3}}{2}i.\]
\[C(\Phi)=3, \ T(\Phi)=1.\]

All the five components are twisted, since each $D_i$ is invariant under the involution, which is $\iota(s,t)=(s,-t)$ on $D_1$, $\iota(s,t)=(-s,t)$ on $D_2$, and $\iota(s,t)=(-s,-t)$ on $D_3$, $D_4$ and $D_5$. Each vertical index is $
\mathfrak{vi}_j=-4$. Equation~\eqref{eq:osszeg} holds with
\[ -20=\sum_{j=1}^5 \mathfrak{vi}_j=-\sum_{i \neq k} D_i \cdot D_k -C(\Phi)+3T(\Phi)=-5 \cdot 4-3+3.\]

    
\end{ex}

\section{Final remark and open questions}\label{s:que}

By Corollary~\ref{co:vitopinv}, the topological reformulation of the vertical indices implies their topological invariance. Assuming the results of a work in progress where the signature of the Milnor fibre is computed in terms of $C(\Phi)$, $T(\Phi)$ and $\mathfrak{vi}_j$, we would get the topological invariance of the signature $\sigma(F)$ as well.

\begin{question}
How to give a purely algebraic description of the vertical indices $\mathfrak{vi}_j$~? 
\end{question}
\noindent Although the original definition is computable, it is not exactly algebraic as it uses an aid-germ $H$ and a splitting of the second order term of the equation $g$ along $\Sigma \setminus \{0 \}$, which is complicated in general.

\begin{question}
What is the vertical monodromy of the cylinder fibre of $\partial F$ around the components $\Upsilon_j$? The vertical mondoromy is an authomorphism of the relative homology $H_1(S^1 \times I, \partial(S^1 \times I);\Z)$, it is an iterated Dehn twist. Is the number of Dehn twists related to $\mathfrak{vi}_j$?
\end{question}

\begin{question}
Which complex plane curves $(D, 0) \subset (\C^2, 0)$ can appear as the double point set of a finitely determined germ $\Phi: (\C^2, 0) \to (\C^3, 0)$? What are the possible involutions $\iota$ on $D$? The examples suggest that the pairs $D_i$ and $D_{\sigma(i)}$ are topologically equivalent, moreover they are equivalent relative to the other components as well. Does this hold in general?

The same can be asked about stable immersions $f:S^3 \looparrowright \R^5$ as well. Which links $\gamma \subset S^3$ can appear as the double point set of $f$?
\end{question}

\renewcommand{\refname}{References}


\begin{thebibliography}{99}
	
\bibitem{dBM}
de Bobadilla, J. F.; Pe\~nafort, G.; Sampaio, E.:
\textit{Topological invariants and Milnor fibre for A-finite germs $\C^2 \rightarrow \C^3$},
accepted in Da Lat University Journal of Science (2021),
arXiv:1912.13056v2 [math.AG]

\bibitem{dBM2}
de Bobadilla J. F.;  Menegon Neto, A.:
\textit{The boundary of the Milnor fibre of complex and real analytic non-isolated singularities},
Geom. Dedicata \textbf{173} (2014), 143--162.
	
\bibitem{ekholm3}
Ekholm, T.:
\textit{Differential 3-knots in 5-space with and without self-intersections},
Topology \textbf{40}(1) (2001), 157--196.
	
\bibitem{ekholm4}
Ekholm, T.:
\textit{Invariants of generic immersions},
Pac. J. Math. \textbf{199}(2) (2001), 321--346.

\bibitem{ESz}
Ekholm, T; Sz{\H{u}}cs, A.:
\textit{Geometric formulas for Smale invariants of codimension two
immersions},
Topology \textbf{42}(1) (2003), 171--196.
	
	

\csere{\bibitem{GCP} Gim\'{e}nez Conejero, R.; Pint\'{e}r, G.: \textit{Signature of the Milnor fiber of parametrized surfaces},
2024, arXiv: 2401.14951}

\bibitem{hughes-melvin}
Hughes, J. F.; Melvin, P. M.:
\textit{The Smale invariant of a knot},
Comment. Math. Helv. \textbf{60}(1) (1985), 615--627.

\bibitem{juhasz}
Juh\'asz, A.:
\textit{A geometric classification of immersions of 3-manifolds into 5-space},
Manuscripta Math. \textbf{117} (2005), 65--83.

\bibitem{Le}
L\^e, D.T.:
\textit{Calcul du nombre de cycles \'evanoussaints d'une hypersurface complexe},
Ann. Inst. Fourier (Grenoble) \textbf{23}(4) (1973), 261--270.

\bibitem{mararmulti}
Marar, W. L.;  Mond, D.:
\textit{Multiple point schemes for corank 1 maps},
J. London Math. Soc. \textbf{39}(2) (1989), 553--567.

\bibitem{marar}
Marar, W. L.; Nu\~{n}o--Ballesteros, J. J.:
\textit{A note on finite determinacy for corank 2 map germs from surfaces to 3-space},
Math.  Proc.  Cambridge \textbf{145}(1) (2008), 153--163.

\bibitem{nunodoodle}
Marar, W. L.;  Nu\~{n}o--Ballesteros, J. J.:
\textit{The doodle of a finitely determined map germ from $\R^2$ to $ \R^3$},
Advances in Mathematics \textbf{221} (2009), 1281--1301.

\bibitem{slicing}
Marar, W. L.; Nu\~{n}o--Ballesteros, J. J.:
\textit{Slicing corank 1 map germs from $ \C^2 $ to $ \C^3 $},
Q. J. Math. \textbf{65} (2014), 1375--1395.

\bibitem{nunodouble}
Marar, W. L.; Nu\~{n}o--Ballesteros, J. J.; Pe\~{n}afort-Sanchis, G.:
\textit{Double point curves for corank 2 map germs from $\C^2$ to $ \C^3$},
Topology and its Applications \textbf{159} (2012), 526-536.

\bibitem{MP2}
Michel, F.; Pichon, A.; Weber, C.:
\textit{The boundary of the Milnor fiber of Hirzebruch surface singularities},
Singularity theory, World Scientific Publication 2007, 745--760.

\bibitem{milnor}
Milnor, J.:
\textit{Singular points of complex hypersurfaces},
Ann. Math. Stud. \textbf{61} (1968), Princeton University Press

\bibitem{Mond0}
Mond, D.:
\textit{On the classification of germs of maps from $ \R^2 $ to $\R^3$},
P. Lond. Math. Soc. \textbf{50} (1985), 333--369.
	
\bibitem{Mond2}
Mond, D.:
\textit{Some remarks on the geometry and classification of germs of maps from surfaces to 3-space},
Topology \textbf{26}(3) (1987), 361--383.
	
\bibitem{Mondvan}
Mond, D.:
\textit{Vanishing cycles for analytic maps},
Singularity Theory and its Applications, Lecture Notes in Mathematics \textbf{1462}, Springer 1991, Berlin, Heidelberg, 221--234.
	
\bibitem{mond-ballesteros}
 Mond, D.; Nu\~{n}o--Ballesteros, J. J.:
 \textit{Singularities of Mappings},
 Springer 2020

\bibitem{mondfitting}
Mond, D.; Pellikaan, R.:
\textit{Fitting ideals and multiple points of analytic mappings},
Algebraic geometry and complex analysis, Springer 1989, 107--161.
	
\bibitem{NP}
N\'emethi, A.; Pint\'er, G.:
\textit{Immersions associated with holomorphic germs},
Comment. Math. Helv. \textbf{90} (2015), 513--541.
	
\bibitem{NP2}
N\'emethi, A.; Pint\'er, G.:
\textit{The boundary of the Milnor fibre of certain non-isolated singularities},
Period. Math. Hung. \textbf{77}(1) (2018), 34--57.

 \bibitem{NSz}
 N\'emethi, A.; Szil\'ard, \'A.:
 \textit{Milnor fiber boundary of a non--isolated surface singularity},
 Lecture Notes in Mathematics \textbf{2037}, Spriger 2012

\bibitem{gtezis}
Pint\'er, G.:
\textit{On certain complex surface singularities},
Ph.D. thesis, E\"otv\"os Lor\'and University 2018,
arXiv:1904.12778 [math.AT].

\bibitem{PS}
Pint\'er, G.; S\'andor, A.:
\textit{Cross-caps, triple points and a linking invariant for finitely determined germs},
Rev. Mat. Complut. online first (2023), 1--21.
    
\bibitem{saeki}
Saeki, O.; Sz{\H{u}}cs, A.; Takase, M.:
\textit{Regular homotopy classes of immersions of 3-manifolds into 5-space},
Manuscripta Math. \textbf{108}(1) (2002), 13--32.

\bibitem{smale}
Smale, S.:
\textit{The classification of immersions of spheres in Euclidean spaces},
Annals of Mathematics \textbf{69}(2) (1959), 327--344.

\bibitem{Wall}
Wall, C. T. C.:
\textit{Finite determinacy of smooth map-germs},
Bulletin of the London Mathematical Society \textbf{13}(6) (1981), 481--539.

\bibitem{Whitney}
Whitney, H.: On regular closed curves in the plane, Compositio Mathematica, \textbf{4}, (1937), 276–284

\end{thebibliography}
\end{document}